\documentclass[12pt]{article}
\usepackage[margin=1in]{geometry}
\usepackage[round]{natbib}
\usepackage{amsmath,amsfonts,amsthm,graphicx}

\newtheorem{theorem}{Theorem}
\newtheorem{lemma}{Lemma}
\newtheorem{corollary}{Corollary}
\newtheorem{proposition}{Proposition}
\theoremstyle{definition}

\title{A nonparametric regression alternative to empirical Bayes approaches to simultaneous estimation}
\author{Alton Barbehenn and Sihai Dave Zhao}

\begin{document}
\maketitle

\begin{abstract}

  The simultaneous estimation of multiple unknown parameters lies at heart of a broad class of important problems across science and technology. Currently, the state-of-the-art performance in the such problems is achieved by nonparametric empirical Bayes methods. However, these approaches still suffer from two major issues. First, they solve a frequentist problem but do so by following Bayesian reasoning, posing a philosophical dilemma that has contributed to somewhat uneasy attitudes toward empirical Bayes methodology. Second, their computation relies on certain density estimates that become extremely unreliable in some complex simultaneous estimation problems. In this paper, we study these issues in the context of the canonical Gaussian sequence problem. We propose an entirely frequentist alternative to nonparametric empirical Bayes methods by establishing a connection between simultaneous estimation and penalized nonparametric regression. We use flexible regularization strategies, such as shape constraints, to derive accurate estimators without appealing to Bayesian arguments. We prove that our estimators achieve asymptotically optimal regret and show that they are competitive with or can outperform nonparametric empirical Bayes methods in simulations and an analysis of spatially resolved gene expression data.
\end{abstract}

\section{ Introduction }
\label{section-intro}

We consider the problem of simultaneously estimating multiple unknown parameters of interest. A canonical problem is to estimate a vector $(\theta_1, \dots, \theta_n)^\top$ given independent observations $X_i \sim N(\theta_i, \sigma^2)$ for $i = 1, \ldots, n$, where $\sigma > 0$ is known. This Gaussian sequence problem \citep{Johnstone2019} is a foundational model in theoretical statistics because it captures the core issues at the heart of a broad class of important problems across science and technology. For example, in large-scale A/B testing, a large number of randomized experiments are conducted to evaluate the effects of multiple types of treatments on multiple types of metrics, and a common goal is to simultaneously estimate every treatment effect \citep{guo2020empirical, Ignatiadis2021, muralidharan2010empirical}. In genomics it is common to profile tens of thousands of genomic features, and a frequent goal is to simultaneously estimate the association between each feature and some phenotype \citep{love2014moderated, robinson2010edger, smyth2004linear}. Other domains where similar issues arise include small area estimation \citep{rao2015small}, econometrics \citep{koenker2023empirical}, and large-scale inference \citep{efron2012large}. As the scale, complexity, and number of unknown parameters in moden experiments has exploded, understanding simultaneous estimation has become increasingly important.

The key statistical feature of simultaneous estimation problems is that the most natural approach, where each parameter is estimated separately, can be suboptimal. In the Gaussian sequence problem, this corresponds to estimating each $\theta_i$ using $X_i$, and early results \citep{Robbins1951, Stein1956} found that this estimator is inadmissible under squared error loss when $n \geq 3$. The overall lesson of this and subsequent work is that estimating $\theta_i$ by including information from the other indices $j \neq i$, even though they may seem unrelated, can improve overall accuracy \citep{Fourdrinier2018, Johnstone2019}. These estimators are often called shrinkage estimators, because early estimators such as one by \citet{JamesStein1961} dominated the naive estimators $X_i$ by shrinking them toward zero. These discoveries came as a major surprise and set off a long and rich line of research to better understand shrinkage and to develop more accurate estimators. Our goal in this paper is to further enrich our understanding of this phenomenon.

The modern approach to simultaneous estimation is to use empirical Bayes methods \citep{Efron2014fg, Efron2019, zhang2003compound}. These methods are Bayesian in the sense that they pretend that the unknown parameters are random samples from some prior distribution, then estimate the parameters using posterior means or medians. They are ``empirical'' Bayesian because the prior is estimated using the observed data, since it is generally unknown. In the Gaussian sequence problem, the empirical Bayes approach pretends that the true means $\theta_i$ are random with some prior distribution $G$ and estimates $\theta_i$ using its posterior mean
\begin{equation}
\label{Tweedie}
E(\theta \mid X = X_i) 
    = \frac{\int_{-\infty}^{\infty} t \phi_\sigma(t-X_i) dG(t)}{\int_{-\infty}^{\infty} \phi_\sigma(t-X_i) dG(t)}
    = X_i + \sigma^2 \frac{f_G'(X_i)}{f_G(X_i)},
  \end{equation}
where $\phi_\sigma(\cdot)$ denotes the density function of a $N(0, \sigma^2)$ random variable and $f_G(x)$ denotes what the marginal density of the $X_i$ would be if the $\theta_i$ indeed were random. The second equality is often called Tweedie's formula \citep{Tweedie}. Since $G$ is unknown, \eqref{Tweedie} cannot be calculated, so the observed data are used to construct plug-in estimates of $E(\theta \mid X = x)$. \citet{Efron2014fg} divides empirical Bayes methods into two classes: $f$-modeling and $g$-modeling, depending on whether the marginal $f_G$ or prior $G$ is substituted into \eqref{Tweedie}, respectively. For example, \citet{EfronMorris1973} showed that the shrinkage estimator of \citet{JamesStein1961} can be recovered as a $g$-modeling procedure under a normal assumption on the prior.

Currently, the state-of-the-art performance in the Gaussian sequence problem is achieved by nonparametric empirical Bayes methods. \cite{BrownGreenshtein2009} proposed an $f$-modeling procedure that estimates $f_G$ using a kernel density estimator and \citet{JiangZhang2009} proposed a $g$-modeling procedure that estimates $G$ using nonparametric maximum likelihood \citep{KieferWolfowitz1956},
which \citet{KoenkerMizera2014} showed can be efficiently calculated using modern convex optimization techniques. These methods are asymptotically optimal under squared-error risk, in the following sense. \citet{Robbins1951} formulated the Gaussian sequence problem as finding a decision rule $\delta(X_1, \ldots, X_n)$ that has a small mean squared error $E \sum_i \{ \theta_i - \delta_i(X_1, \ldots, X_n) \}^2$. He referred to this as an example of a compound decision problem, as multiple decisions need to be made to estimate multiple $\theta_i$. He focused on the class of so-called separable estimators, where $\delta_i(X_1, \ldots, X_n) = d(X_i)$ for some fixed function $d$, and showed that
\begin{equation}
\label{oracle}
d^*(x) = \frac{\sum_{i=1}^n \theta_i \phi_\sigma(\theta_i-x)}{\sum_{i=1}^n \phi_\sigma(\theta_i-x)}.
\end{equation}
minimizes mean squared error over all separable estimators. The estimator $\hat{d}_{JZ}(x)$ of \citet{JiangZhang2009} is asymptotically optimal in that
\begin{equation}
  \label{ratio_opt}
  \limsup_{n \rightarrow \infty} \frac{
    E \sum_i [\{\theta_i - \hat{d}_{JZ}(X_i)\}^2]
  }{
    E \sum_i [\{\theta_i - d^*(X_i)\}^2]
  }
  =
  1
\end{equation}
for any $(\theta_1, \ldots, \theta_n)^\top$ under mild conditions, and the estimator $\hat{d}_{BG}(x)$ of \citet{BrownGreenshtein2009} enjoys a similar property.

However, despite their excellent performance, nonparametric empirical Bayes approaches still suffer from two major issues. The first is philosophical: these methods solve a frequentist problem, but they do so by following Bayesian reasoning. They have good frequentist properties, but this ``philosophical identity problem'' has nevertheless contributed to somewhat uneasy attitudes toward empirical Bayesian methods that may limit their uptake \citep{Efron2019}. The second issue is practical and emerges in recent work that has started to consider how nonparametric empirical Bayes approaches can be applied to more complex problems. These include estimating multidimensional $\theta_i$ \citep{saha2020on, soloff2021multivariate}, covariance matrix estimation \citep{xin2022compound}, and regression \citep{kim2022flexible, wang2021nonparametric}. For example, \citet{wang2021nonparametric} studied the multivariate regression setting $Y_{ik} = X_i^\top \beta_k^\star + \epsilon_{ik} \sim N(0, \tau_k^2)$ for outcomes $k = 1, \ldots, K$. Their nonparametric empirical Bayes method pretends that $\beta_k^\star \sim G(b)$ and estimates $G(b)$ by maximizing the marginal likelihood:
\[
  \hat{G}(b) = \arg\max_G \prod_{k = 1}^K \int \frac{1}{\{2 \pi (\tau^2 / n)\}^{1/2}} \exp\left\{-\frac{n}{2 \tau^2} (\bar{Y}_k - \bar{X}^\top b)^2 \right\} dG(b),
\]
where $n$ is the sample size, $\bar{Y}_k$ and $\bar{X}$ are the sample means of the $Y_{ik}$ and $X_i$. \citet{KoenkerMizera2014} noted that an approximation to $\hat{G}(b)$ can be conveniently obtained by maximizing over all discrete distributions supported on a pre-specified grid of points, which is a convex optimization problem. However, when $n$ is large, the density in the integrand can be extremely small at most of the grid points, which caused problems even when using modern optimization software such as MOSEK.

In this paper we study these issues using the canonical Gaussian sequence problem. In this setting we model the low density issue described above by generating $X_i \sim N(\theta_i, \sigma^2)$ with low noise. When $\sigma^2$ is small, the densities that arise in these problems can suffer from the same numerical issues as those encountered in the more complex simultaneous estimation problems. This is further discussed in Section \ref{section-low-noise} and may account for the poor performance of $\hat{d}_{JZ}(x)$ and $\hat{d}_{BG}(x)$ in simulations.

To avoid these issues, we propose a new, alternative approach to the Gaussian sequence problem. First, our approach is purely frequentist and is based on the compound decision theoretic formulation of \citet{Robbins1951}. We establish a connection between simultaneous estimation and nonparametric regression and adopt an empirical risk minimization strategy that converts the problem into a penalized least-squares problem. We use flexible regularization strategies, such as shape constraints, to derive accurate estimators without appealing to Bayesian arguments. Second, our new approach can still achieve a similar accuracy as nonparametric empirical Bayes methods. We prove that our estimators achieve asymptotically optimal regret. Finally, our approach is robust to small $\sigma^2$. We show in simulations that the performances of our estimators are comparable to those of $\hat{d}_{BG}(x)$ and $\hat{d}_{JZ}(x)$ in high-noise settings and superior in low-noise settings. We also apply these methods to denoise spatially resolved gene expression data and show that our proposed procedures achieve the best results. In the future, we hope that generalizations of our approach can lead to novel estimators with distinct advantages in more complex simultaneous estimation problems.

\section{ A Penalized Regression Framework }
\label{section-erm}

Our framework is motivated by the observation by \citet{Stigler1990} that finding a good estimator of the oracle separable estimator $d^*(x)$ \eqref{oracle} closely resembles least-squares regression. Specifically, if we had the oracle coordinate pairs $(X_i, \theta_i)$, we could model and estimate $d^*(x)$ by minimizing the oracle loss
\begin{equation}
\label{oracle-loss}
R_n(d) = n^{-1} \sum_{i=1}^n \{\theta_i - d(X_i)\}^2 
\end{equation}
using ordinary least squares. Minimizing $R_n(d)$ is not possible because $\theta_i$ are unknown. \cite{Stigler1990} solved this problem for the class of linear models $d_{a,b}(x) = a + b x$. He first found the oracle linear model assuming $\theta_i$ were known and then developed method-of-moments-type estimates of those coefficients. Interesting, this essentially gave Stein-type shrinkage estimators and provided a new interpretation of shrinkage. However, his method-of-moments approach is restricted to the linear class $d_{a, b}(x)$, so this strategy cannot give estimators as accurate as the nonlinear ones produced by nonparametric empirical Bayes methods.

Our main insight is to combine Stigler's regression perspective with empirical risk minimization ideas. Instead of deriving the oracle minimizer of $R_n(d)$ and then estimating it, as in \cite{Stigler1990}, our approach is to estimate $R_n(d)$ and then minimize the empirical loss. This strategy allows us to take advantage of many useful tools developed in the nonparametric regression and empirical risk minimization literatures, such as the easy implementation of shape constraints and a well-established set of techniques for theoretical analysis \citep{VandervaartWellner1996, Wainwright2019}. 

Our main task is therefore to develop a computable estimate of $R_n(d)$ that can be directly minimized. A natural approach is to plug in $X_i$ for each $\theta_i$ to produce the loss estimate $\hat{R}_{\text{naive}}(d) = n^{-1} \sum_{i=1}^n \{X_i - d(X_i)\}^2$, but without further regularization $\hat{R}_{\text{naive}}(d)$ can be exactly minimized by the identity function $d(x) = x$. This is a problem because the identity function corresponds to the naive estimator, so we know that the minimizer of $\hat{R}_{\text{naive}}(d)$ will never produce asymptotically optimal estimators.

Instead, we use Stein's lemma \citep{SteinsLemma} to develop an unbiased estimate of $R_n(d)$ for a large class of $d(x)$. If $d(x)$ is an absolutely continuous function, then
\begin{align}
R(d;\theta)
    &= \frac{1}{n} \sum_{i=1}^n E \{\theta_i - d(X_i)\}^2 \nonumber \\
    &= \frac{1}{n} \sum_{i=1}^n E \{X_i - d(X_i)\}^2 + \frac{2}{n} \sum_{i=1}^n E \{ (X_i - \theta_i) d(X_i) \} - \sigma^2 \nonumber \\
    &= \frac{1}{n} \sum_{i=1}^n E \{X_i - d(X_i)\}^2 + \frac{2 \sigma^2}{n} \sum_{i=1}^n E \{ d'(X_i) \} - \sigma^2 . \label{risk-sum}
\end{align}
We note that it is actually sufficient for $d(x)$ to be weakly differentiable; however, we will stick to absolute continuity for consistency with our later discussion. With this risk decomposition, an unbiased estimate of $R_n(d)$ is: 
\begin{equation}
\label{SURE}
\hat{R}_1(d) = \frac{1}{n} \sum_{i=1}^n \{X_i - d(X_i)\}^2 + \frac{2 \sigma^2}{n} \sum_{i=1}^n d'(X_i) - \sigma^2 . 
\end{equation}
In our regresson framework, the risk estimate \eqref{SURE}, often called Stein's unbiased risk estimate (SURE), is readily interpreted as a penalized least-squares loss. 
We note that in contrast to most penalized least-squares regression problems, $\hat{R}_1(d)$ does not have a tuning parameter that governs the trade-off between the goodness-of-fit and parsimony of the fitted function. 

The penalized loss \eqref{SURE} has interesting connections to existing methodology. First, since Stein's lemma is used to compute the covariance between $X_i$ and $d(X_i)$, the penalty term can be thought of as a covariance penalty similar to Akaike's information criterion and Mallow's $C_p$ criterion \citep{Efron2014cov, Akaike1973, Mallows1973}. In contrast to most uses of these information criteria, here we use the covariance penalty in the fitting procedure rather than for model evaluation after fitting. Second, if we restrict $d(x)$ to be monotone non-decreasing, the penalty term can be seen as an $\ell_1$-type penalty; this suggests many connections to existing $\ell_1$-penalized least-squared regression problems. For example, if we parameterize $d(x)$ to be a continuous, piecewise linear function with knots at each $X_i$, then \eqref{SURE} becomes an irregularly spaced, degree-one trend-filter \citep{Tibshirani2014}. Third, for any monotone non-decreasing $d(x)$, the penalty can be interpreted as a weighted total variation penalty. To see this, let $F_n(t)$ denote the empirical distribution of $X_i$. Then $n^{-1} \sum_{i=1}^n d'(X_i) = \int_{-\infty}^{\infty} \vert d'(t) \vert dF_n(t)$, which is the total variation with respect to the empirical measure instead of the Lebesgue measure. The usual total variation is used as a penalty for adaptive regression splines \citep{MammenvandeGeer1997} and a constraint for saturating splines \citep{Boyd2018}. Based on this interpretation, our approach can be interpreted as estimating the optimal $d^*(x)$ \eqref{oracle} using a self-supervised, isotonic, $\ell_1$-penalized least-squares regression problem. 

To illustrate the simplest nontrivial application of the penalized regression framework, we return to the class of simple linear estimators used by \cite{Stigler1990}: $d_{a,b}(x) = a + b x$. Since estimators in this class have a constant slope, our estimator is given by:
\begin{equation}
\label{JamesStein}
\hat{d}_{a,b}(x) = \arg\min_{a,b\in\mathbb{R}} \frac{1}{n} \sum_{i=1}^n (X_i - a - b X_i)^2 + 2 \sigma^2 b - \sigma^2 .
\end{equation}
Our resulting estimates are $\hat{a} = (1-\hat{b}) \bar{X}_n$ and $\hat{b} = 1 - n \sigma^2 S^{-2}$, where $S^2 = \sum_{i=1}^n (X_i - \bar{X}_n)^2$ and $\bar{X}_n$ is the sample mean. These estimates are very close to the Efron-Morris estimator \citep{EfronMorris1973} and Stigler's method-of-moments type estimator \citep{Stigler1990}, which use $n-3$ and $n-1$ rather than $n$ in $\hat{b}$, respectively.


Related approaches have been considered for this problem before. Many empirical Bayes methods \citep{BanerjeeLiuMukherjeeSun2021, XieKouBrown2012, XieKouBrown2016, Zhao2021, ZhaoBiscarri2021} use empirical risk estimation to fit various parametric classes of $d(x)$. These methods use Bayesian ideas to motivate the parametric class. In contrast our approach directly targets the risk of interest 
and can thus produce estimators that have no natural Bayesian interpretation. By analogy to Efron's classification of empirical Bayes methods into $f$- and $g$-modeling estimators \citep{Efron2014fg}, we can consider our procedure as ``$e$-modeling'', as we directly model the Bayesian conditional expectation 
$E(\theta \mid X)$. 

\section{Regression via Constrained SURE Minimization}
\label{section-ecdf}

A natural approach to developing an asymptotically optimal estimator based on the discussion above is to minimize  $\hat{R}_1(d)$ \eqref{SURE} over a suitable function class.
However, it turns out that the penalty in $\hat{R}_1(d)$ is not sufficient to prevent overfitting. In other words, there exist absolutely continuous functions such that $d(X_i) = X_i$ but $d'(X_i) \leq 0$ for each $i = 1, \dots, n$; these can simultaneously minimize the squared error while having an arbitrarily small penalty. Figure \ref{fig-riskminimizers}A illustrates two examples, denoted $d_{\text{sawtooth},a}(x)$. These are continuous, piecewise linear functions with two knots evenly spaced between each consecutive pair of observations so that $d_{\text{sawtooth},a}(X_i) = X_i$ and $d_{\text{sawtooth},a}'(X_i) = a$ for every $i = 1, \ldots, n$ and some $a \in \mathbb{R}$. Because $\hat{R}_1(d_{\text{sawtooth},a}) = 2 \sigma^2 a - \sigma^2$, we can make $\hat{R}_1(d_{\text{sawtooth},a})$ arbitrarily small by decreasing $a$, yet $d_{\text{sawtooth},a}(X_i)$ would estimate $\theta_i$ to be $X_i$. Furthermore, notice that $d_{\text{sawtooth},0}(x)$ is monotone non-decreasing, so the addition of a monotonicity constraint will still not prevent overfitting.

\begin{figure}[t!]
  \centering
  \includegraphics[width=0.8\linewidth]{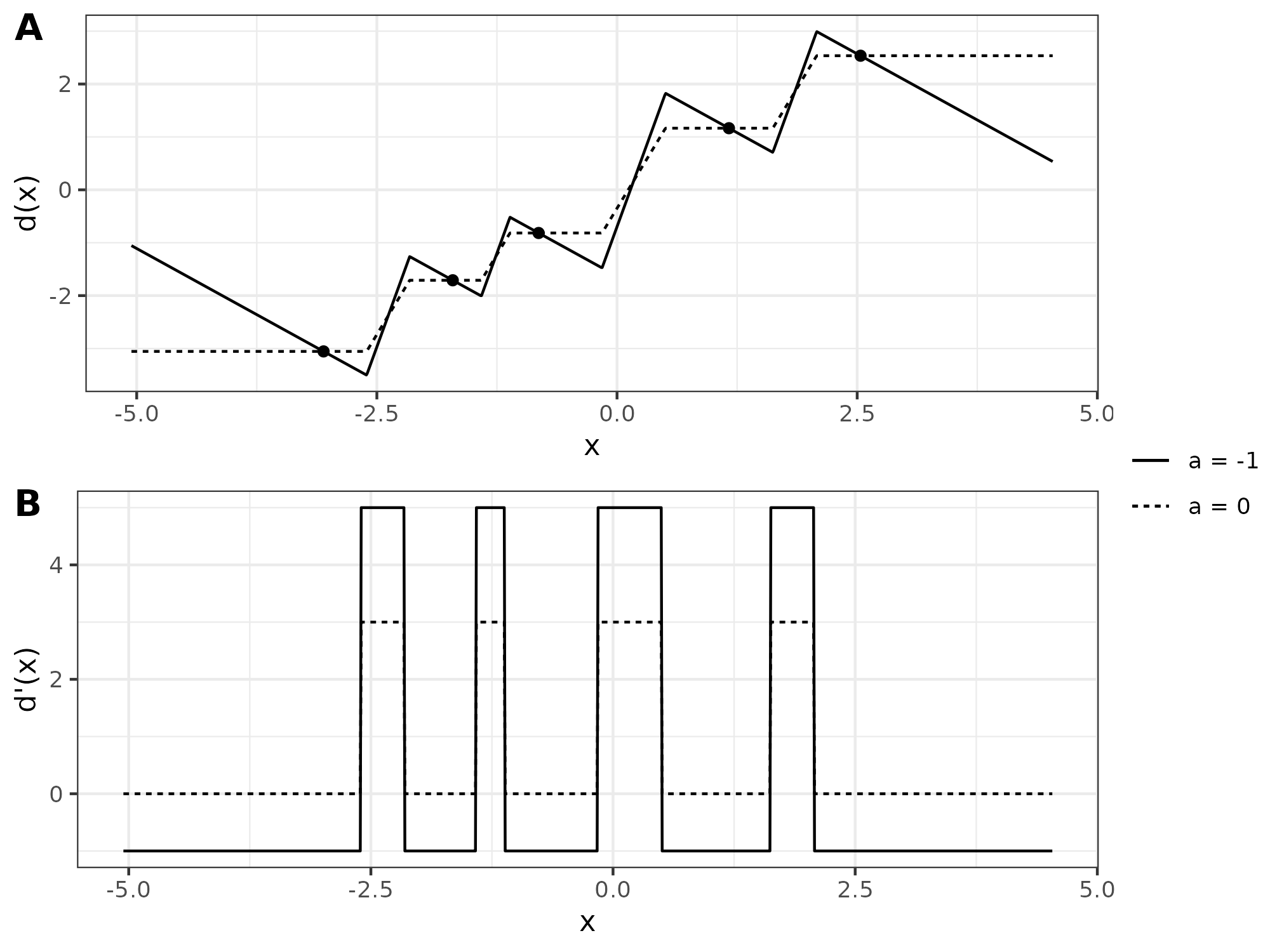}
  \caption{Two examples of absolutely continuous functions that satisfy $d_{\text{sawtooth},a}(X_i) = X_i$ and $d_{\text{sawtooth},a}'(X_i) = a$ for every $i = 1, \ldots, n$ and some $a \in \mathbb{R}$. Panel A plots $d_{\text{sawtooth},a}(x)$ functions and panel B shows their derivatives. The lines are labeled with their slopes, $a$, at each $X_i$. The dashed line is monotone non-decreasing.}
  \label{fig-riskminimizers}
\end{figure}

Figure \ref{fig-riskminimizers}B suggests a way to resolve this problem. It shows that the derivatives $d_{\text{sawtooth},a}'(x)$ are highly nonsmooth. Since it is the derivative that determines the penalty in $\hat{R}_1(d)$, this suggests that controlling the smoothness of $d'(x)$ can control overfitting. A natural choice is a total variation-based shape constraint on $d'(x)$. Indeed, Proposition \ref{prop-abscont} shows that as long as the total variation of $d'(x)$ is not too large, functions like $d_{\text{sawtooth},a}(x)$ cannot exist. 

\begin{proposition}
\label{prop-abscont}
Let $x_1 < \dots < x_n$ and $d:\mathbb{R}\to\mathbb{R}$ be an absolutely continuous function. If $TV(d') < 2(n-1)$, then there exists an $i \in \{1, \dots, n\}$ such that either $d(x_i) \neq x_i$ or $d'(x_i) \geq 0$.
\end{proposition}


Conveniently, the oracle minimizer $d^*(x)$ \eqref{oracle} has a derivative with manageable total variation. Proposition \ref{prop-optimalproperties} characterizes this quantity, along with other properties of $d^*(x)$ that we will use to design a function class over which we will minimize $\hat{R}_1(d)$. In particular, Proposition \ref{prop-optimalproperties} implies that $d^*(x)$ is always bound, absolutely continuous, and monotone non-decreasing. 

\begin{proposition}
\label{prop-optimalproperties}
The optimal separable estimator \eqref{oracle}, $d^*(x)$, is bounded, monotone non-decreasing, and Lipschitz continuous; its derivative has bound total variation and is also Lipschitz continuous. Let $r(\theta) = \max_{i=1}^n \theta_i - \min_{i=1}^n \theta_i$ denote the range of $\theta_1, \dots, \theta_n$. These bounds are summarized as:
\begin{enumerate}
\item $\min_{i=1}^n \theta_i \leq d^*(x) \leq \max_{i=1}^n \theta_i$
\item $0 \leq d^{*\prime}(x) \leq \sigma^{-2} r(\theta)^2$
\item $TV(d^{*\prime}) \leq \sigma^{-2} r(\theta)^2$
\item $\left\lvert d^{*\prime\prime}(x) \right\rvert \leq \sigma^{-4} r(\theta)^3$ .
\end{enumerate}
\end{proposition}

Using Propositions \ref{prop-abscont} and \ref{prop-optimalproperties}, we can now choose a suitable function class for estimating $d^*(x)$. Observe that if $TV(d^{*\prime}) < 2 (n-1)$, then we can find an upper bound on $TV(d')$ that simultaneously prevents over-fitting of $\hat{R}_1(d)$ and ensures that $d^*(x)$ is in our function class. Since neither $TV(d^{*\prime})$ nor $r(\theta) = \max_{i=1}^n \theta_i - \min_{i=1}^n \theta_i$ are known when defining our function class, we specify data-dependent estimates for these quantities so that both of our desired properties hold simultaneously with high probability. We find a high probability upper bound on the range of $\theta_1, \dots, \theta_n$ by expanding the range of $X_1, \dots, X_n$ by a non-random quantity, $b_n > 0$. We propose the following data-dependent function class:
\begin{equation}
\label{eq-D1}
\mathcal{D}_{1,n} = \left\{ d:\mathbb{R}\to[\min_i X_i - b_n, \max_i X_i + b_n] \mid d ~ \text{monotone non-decreasing}, TV(d') \leq \tau_n \right\},
\end{equation}
where $b_n$ and $\tau_n = \sigma^{-2}(\max_{i=1}^n X_i - \min_{i=1}^n X_i + 2 b_n)^2$ are constants that can depend on $X_i$, $\sigma$, and $n$. The inclusion of the monotonicity constraint is advantageous because from Proposition \ref{prop-optimalproperties}, $d^*(x)$ is always monotone non-decreasing. 
Note that the constraint $TV(d') \leq \tau_n < \infty$ ensures that all of the functions in $\mathcal{D}_{1,n}$ are absolutely continuous.

We now need to ensure that $\mathcal{D}_{1, n}$ contains the oracle estimator $d^*(x)$. Lemma \ref{lemma-prob} implies that this will be true with high probability if $b_n$ is of the order $\{\sigma^2 \log(n)\}^{1/2}$. 

\begin{lemma}
\label{lemma-prob}
Let $b_n \geq 0$ be a non-random value that may depend on $\sigma$ and $n$, $\mathcal{D}_{1,n}$ be defined as in \eqref{eq-D1}, and $d^*(x)$ be the optimal separable estimator \eqref{oracle}. Then
$$
P\left( d^* \not\in \mathcal{D}_{1,n} \right) \leq 6 n \exp\left(\frac{- b_n^2}{2 \sigma^2}\right) .
$$
\end{lemma}
In particular, when $b_n = (K \sigma^2 \log n)^{1/2}$ for some $K \in \mathbb{R}$, the bound becomes $6 n^{1-\frac{K}{2}}$, which can be made to converges to zero at an arbitrarily quick polynomial rate by selecting the correct $K > 2$. We therefore define the following estimate of $d^*(x)$ using the function class using our data-dependent function class $\mathcal{D}_{1,n}$:
\begin{equation}
\label{eq-estimator1}
\hat{d} = \arg\min_{d\in\mathcal{D}_{1,n}} \hat{R}_1(d) .
\end{equation}

Theorem \ref{thrm-pwl-implementation} shows that $\hat{d}(x)$ can be characterized as a piecewise linear function. The proof of Theorem \ref{thrm-pwl-implementation} uses Carathéodory's theorem for convex hulls \citep{Caratheodory1911} to reduce the infinite dimensional optimization problem over $\mathcal{D}_{1,n}$ to a finite dimensional one, similar to the proof of Theorem 1 in \citet{Boyd2018}. 
Notice that the range of $\hat{d}(x)$ always lies within $\left[\min_{i=1}^n X_i, \max_{i=1}^n X_i\right]$, so in practice the role of $b_n$ is only needed for defining the high probability bound $TV(d^{*\prime}) \leq \tau_n$. 

\begin{theorem}
\label{thrm-pwl-implementation}
For every $g\in\mathcal{D}_{1,n}$ \eqref{eq-D1}, there exists a continuous, piecewise linear function, $\tilde{g}\in\mathcal{D}_{1,n}$ with at most $n+3$ knots, such that $\tilde{g}(X_i) = g(X_i)$ for $i = 1, \dots, n$, $\sum_{i=1}^n \tilde{g}'(X_i) = \sum_{i=1}^n g'(X_i)$, and $\tilde{g}'(x) = 0$ for $x < \min_{i=1}^n X_i$ and $x > \max_{i=1}^n X_i$.
\end{theorem}

Based on Theorem \ref{thrm-pwl-implementation} we implement $\hat{d}(x)$ as a piecewise linear function: $\hat{d}(x) = \beta_0 + \sum_{j=1}^{M} \beta_j (x - x_j)_+$, where $(y)_+ = \max\{y, 0\}$ and $x_1 < x_2 < \dots < x_M$ are a finite collection of fixed knots. The minimization of $\hat{R}_1(d)$ over $\beta_i$ and $x_j$ is non-convex, so in practice we approximate $\hat{d}(x)$ by fixing a regular grid of knots and only optimizing over $\beta_i$. 
We note that for fixed knots $x_1 < x_2 < \dots < x_M$, minimizing $\hat{R}_1(d)$ over $\beta_i$ is a convex problem that can be readily solved by many off-the-shelf convex optimizers. Other potentially viable strategies for rapidly finding a good set of knots include placing knots at each of the observations or their quantiles and iterative approaches that slowly accumulate knots, similar to \citet{Boyd2018, BoydSchiebingerRecht2017}. The performance of various knot placement strategies are compare in Appendix A of the Supplementary Material.

Once the knots are fixed, we approximate $\hat{d}(x)$ as:
\begin{align*}
\hat{d}(x) &\approx
    \arg\min_{\beta_0, \beta_1, \dots, \beta_M \in \mathbb{R}} \sum_{i=1}^n \left\{X_i - \beta_0 - \sum_{j=1}^M \beta_j (X_i - x_j)_+ \right\}^2 + 2 \sigma^2 \sum_{i=1}^n \sum_{j=1}^M \beta_j 1(X_i \geq x_j) \\
& \text{subject to:} \quad \sum_{j=1}^M \beta_j = 0 , \quad \sum_{j=1}^k \beta_j \geq 0 ~ \text{for} ~ k = 1, \dots, M-1, \quad \sum_{j=1}^M \vert \beta_j \vert \leq \tau_n ,
\end{align*}
where the constraints correspond to our bounded, monotone non-decreasing, and total variation constraints, respectively. We note that there are two interesting interpretations of this parameterization. First, when we place the knots at each of the observations, this estimator is a constrained, irregularly-spaced, degree-one trend-filter \citep{Tibshirani2014}; the function constraints can be easily translated into the trend-filter parameterization. Second, this parameterization resembles a degree-one locally adaptive regression spines \citep{MammenvandeGeer1997} and one-dimensional MARS \citep{Friedman1991}; however, our minimization problem includes constraints and a slightly different penalty. We stick to the more general model described above for flexibility and ease of implementation.

Theorem \ref{thrm-pwl-rate} below shows that the conservative, high probability upper bounds on $TV(d^{*\prime})$ and $r(\theta)$ that we pursued in \eqref{eq-D1} produce an asymptotically optimal estimator. 
In practice, the choice of $\tau_n$ in the statement of Theorem \ref{thrm-pwl-rate} is usually too large and a smaller value may be more efficient; we describe some alternatives in Section \ref{section-sims}. 

\begin{theorem}
\label{thrm-pwl-rate}
Let $X_i \sim N(\theta_i, \sigma^2)$ for $i=1,\dots,n$ be independent such that $\sigma^2 > 0$ is known and assume $\max_{i=1}^n \vert \theta_i \vert < C_n$. Further, let $b_n = (3 \sigma^2 \log n)^{1/2}$ and $\tau_n = \sigma^{-2} (\max_{i=1}^n X_i - \min_{i=1}^n X_i + 2 b_n)^2$. Define $\hat{d}(x)$ as in \eqref{eq-estimator1} and let $d^*(x)$ be the optimal separable estimator \eqref{oracle}, then
$$
R(\hat{d}; \theta) - R(d^*; \theta) = \mathcal{O}\left\{ n^{-1/2} \left(C_n + \log^{1/2} n \right)^2 \right\} .
$$
\end{theorem}

The excess risk bounded in Theorem \ref{thrm-pwl-rate} is sometimes referred to as the compound regret \citep{Efron2019, PolyanskiyWu2021}. Our result implies that our constrained and penalized regression estimator can achieve asymptotically zero regret whenever $C_n = o(n^{1/4})$. 
When $C_n = o\left(n^{\epsilon}\right)$ for every $\epsilon > 0$, the regret of our estimator is essentially of order $n^{-1/2}$. This matches the best known rate for this simultaneous estimation problem over an $\ell_\infty$ ball, achieved by the nonparametric $g$-modeling estimator $\hat{d}_{JZ}$ of \citet{JiangZhang2009}, whose asymptotic ratio optimality result \eqref{ratio_opt} can be translated to a regret that also scales like $n^{-1/2}$ \citep{PolyanskiyWu2021}.

Our proof of Theorem \ref{thrm-pwl-rate} leverages standard techniques for studying M-estimators. This leads naturally to bounds on the regret. An alternative quantity of interest would be the ratio of the two risks, as in \eqref{ratio_opt}. Bounding the ratio is more meaningful in settings where the optimal separable estimator may perform very well, such as when $\theta$ is sparse and the oracle risk $R(d^*; \theta)$ already tends quickly toward zero. Studying the ratio optimality of our $\hat{d}$ would be an interesting direction for future work, and simulations in Section \ref{section-sims} already show that our estimator is comparable to the nonparametric empirical Bayes methods when estimating sparse $\theta$.

In our proof we use symmetrization and metric entropy bounds based on the size of $\mathcal{D}_{1,n}$ and the function class of its derivatives. Our total variation bound, based on Proposition \ref{prop-abscont}, provides the needed control over the function class of derivatives, while the bounded and monotone non-decreasing constraints are used to control the size of $\mathcal{D}_{1,n}$. We note that the monotonicity constraint is not necessary for the proof and other function classes such as bound total variation or Lipschitz over a finite domain, may be used instead. However, these alternative function classes may increase the size of the function class, introduce additional tuning parameters, or slow the rate of excess risk convergence.

\section{Removing Constraints with Biased Risk Minimization}
\label{section-gkde}

Our estimator $\hat{d}(x)$ \eqref{eq-estimator1} above has both a monotonicity as well as a total variation constraint, and the latter is crucial to avoid overfitting. In contrast, in standard isotonic least-squares regression, a monotonicity constraint alone already offers sufficient regularization \citep{vandeGeer2000}. A natural question is whether monotonicity alone might be sufficient for developing an asymptotically optimal estimator in the simultaneous estimation problem.

Investigating this question reveals an interesting connection between the constraints on $d(x)$ and the estimate of the oracle loss $R(d; \theta)$ \eqref{oracle-loss}.
A closer inspection of the decomposition of this loss in \eqref{risk-sum} gives
\begin{align}
R(d; \theta) 
    &= \frac{1}{n} \sum_{i=1}^n E \{X_i - d(X_i)\}^2 + 2 \sigma^2 \int_{-\infty}^{\infty} d'(x) f_\theta(x) dx - \sigma^2 , \label{risk-integral}
\end{align}
where $f_\theta(x) = n^{-1} \sum_{i=1}^n \phi_\sigma(x - \theta_i)$. Previously, we replaced the integral above with an empirical expectation to give $\hat{R}_1(d)$ \eqref{SURE}, and we found that monotone functions could still overfit. The core problem was that the empirical expectation penalty could not control the shape of $d(x)$ for $x \ne X_i$ because the values of $d'(x)$ for these $x$ do not affect the risk estimate; recall Figure \ref{fig-riskminimizers}. In contrast, the integral in \eqref{risk-integral} controls the shape of $d(x)$ for all $x$ in the support of $f_\theta(x)$, because all values of $d'(x)$ would affect the risk.

This suggests that a monotonicity constraint would be sufficient to achieve asymptotical optimality given only a monotonicity constraint if we could find a continuous estimate $f_\theta(x)$ 
so that $d'(x)$ is penalized over the whole domain of the estimator. 
Drawing on the kernel density estimator literature \citep{Tsybakov2009}, we use
\begin{equation}
\label{eq-kde}
\hat{f}_h(x) = \frac{1}{n} \sum_{i=1}^n \phi_h(x - X_i)
\end{equation}
for some bandwidth $h > 0$. 
In the Bayesian setting, $\hat{f}_h(x)$ is equivalent to a Gaussian kernel estimate of the density $X_i$ marginalizing over the random $\theta_i$; however, we stress that our approach does not depend on this interpretation. 

We propose the following function class for fitting estimators:
\begin{equation}
\label{eq-D0}
\mathcal{D}_{0,n} = \left\{ d:\mathbb{R}\to[\min_i X_i - b_n, \max_i X_i + b_n] \mid d ~ \text{monotone non-decreasing} \right\},
\end{equation}
where $b_n$ is a constant depending on $\sigma$ and $n$. As in Lemma \ref{lemma-prob}, $b_n$ is used to ensure that $\mathcal{D}_{0,n}$ contains the optimal separable estimator with high probability. This result can be established by noting that $\mathcal{D}_{1,n} \subset \mathcal{D}_{0,n}$ when the range of $\mathcal{D}_{0,n}$ is at least as large as range of $\mathcal{D}_{1,n}$. We would like to use plug our $\hat{f}_h(x)$ \eqref{eq-kde} into the loss decomposition \eqref{risk-integral} and minimize the result over functions in $\mathcal{D}_{0,n}$. Unfortunately, this is not well-justified because not all monotone functions are weakly differentiable, so Stein's lemma cannot be directly applied. In order to justify a risk estimate for all functions in $\mathcal{D}_{0,n}$, Proposition \ref{prop-extendedSL} extends Stein's lemma to apply to every bounded, monotone non-decreasing function. The proof of Proposition \ref{prop-extendedSL} draws heavily on a Corollary 1 in \cite{Tibshirani-SL}. Rather than extend Stein's lemma to apply to all of $\mathcal{D}_{0,n}$, we could instead study a smoother function class that is a subset of $\mathcal{D}_{0,n} \cap \{d \mid d ~ \text{weakly differentiable}\}$; however, this may introduce additional tuning parameters or make it difficult to establish a parameterization for the optimal estimator.

\begin{proposition}
\label{prop-extendedSL}
Let $Z \sim N(\mu,\sigma^2)$. Let $g:\mathbb{R}\to\mathbb{R}$ have finite total variation and let $\mathcal{J}(g) = \{t_1, t_2, \dots \}$ be the countable set of locations where $g$ has a discontinuity. Let $g'$ be the derivative of $g$ almost everywhere and assume that $E\vert g'(Z) \vert < \infty$. Then
$$
\frac{1}{\sigma^2} E[(Z-\mu) g(Z-\mu)] =  E[g'(Z-\mu)] + \sum_{t_k \in \mathcal{J}(g)} \phi_\sigma(t_k-\mu) \left\{ \lim_{x \downarrow t_k} g(x) - \lim_{x \uparrow t_k} g(x) \right\} .
$$
\end{proposition}

Since every bounded, monotone non-decreasing function has finite total variation, we can combine Proposition \ref{prop-extendedSL} with \eqref{risk-integral} and \eqref{eq-kde} to find an estimate of the oracle loss $R(d; \theta)$ \eqref{oracle-loss} that holds for every $d \in \mathcal{D}_{0,n}$:
\begin{equation}
  \label{risk-biasedEstimate} 
  \begin{aligned}
    \hat{R}_0(d; h) = \frac{1}{n} \sum_{i=1}^n \{X_i &- d(X_i)\}^2 
    + 2 \sigma^2 \int_{-\infty}^{\infty} d'(x) \hat{f}_{h}(x) dt \\
    &+ 2 \sigma^2 \sum_{x_k \in \mathcal{J}(d)} \hat{f}_{h}(x_k) \left\{ \lim_{x \downarrow x_k} d(x) - \lim_{x \uparrow x_k} d(x) \right\} - \sigma^2 .
  \end{aligned}
\end{equation}
This risk estimate is biased for fixed $h$, but minimizing it gives a useful estimate of $d^\star(x)$ when $h \downarrow 0$ at an appropriate rate with $n$.
Because $\hat{f}_{h}(x)$ is positive for all of $\mathbb{R}$ and because only constant functions minimize both the penalty terms, $\hat{R}_0(d; h)$ almost surely cannot be over-fit by functions in $\mathcal{D}_{0,n}$.

We propose the following estimator using this new risk estimate:
\begin{equation}
\label{eq-estimator0}
\tilde{d}_h = \arg\min_{d\in\mathcal{D}_{0,n}} \hat{R}_0(d;h) .
\end{equation}
Theorem \ref{thrm-pwc-implementation} below exactly characterizes $\tilde{d}_h(x)$ for any fixed value of $h > 0$. This theorem is similar to Proposition 1 in \citet{MammenvandeGeer1997} except they study the standard total variation penalty and place knots at the data, while we study a weighted total variation penalty and place knots according to the weight function. In practice, the penalized least-squares regression formulation ensures that we can ignore $b_n$ because $\tilde{d}_h(x)$ always has a range within $[\min_{i=1}^n X_i, \max_{i=1}^n X_i]$, so $\mathcal{D}_{0,n}$ is free of tuning parameters.

\begin{theorem}
\label{thrm-pwc-implementation}
Let $h > 0$. For every $g\in\mathcal{D}_{0,n}$ \eqref{eq-D0} that is right-continuous at $X_1, \dots, X_n$, there exists a piecewise constant function, $\tilde{g}\in\mathcal{D}_{0,n}$, that has at most $n-1$ knots and satisfies $\tilde{g}(X_i) = g(X_i)$ from the right and $\hat{R}_0(\tilde{g}; h) \leq \hat{R}_0(g; h)$. The knots of $\tilde{g}(x)$ lie at the minimum of $\hat{f}_h(x)$ between each consecutive order statistics of $X_1, \dots, X_n$.
\end{theorem}

Theorem \ref{thrm-pwc-implementation} provides both the existence and the finite parameterization of $\tilde{d}_h(x)$. This makes exactly finding $\tilde{d}_h(x)$ a convex optimization problem for any fixed $h > 0$, unlike $\hat{d}(x)$ in the previous section. For a fixed $h > 0$, finding the true knots can still be a somewhat time-intensive procedure and in practice placing the knots at the average of each consecutive pair of $X_i$ provides a good approximation and alleviates much of the computational burden. We further note that the true knots may not be worth finding because they minimize $\hat{R}_0(d; h)$, so they are random and may not be the best knots for minimizing the out-of-sample risk. Simulations comparing knot placement strategies are in Appendix A of the Supplementary Material. Once the knots are fixed, computing $\tilde{d}_h(x)$ can be performed using off-the-shelf convex optimization software. 

We efficiently compute $\tilde{d}_h(x)$ using a tailored algorithm that builds on Theorem \ref{thrm-pwc-implementation} and leverages the pool adjacent violators algorithm \citep{pava, BestChakravarti1990}. The goal is to transform \eqref{eq-estimator0} into an unpenalized isotonic regression problem. We assume that $X_1 < \dots < X_n$ without loss of generality. We first note that $\tilde{d}_h(x)$ is a piecewise constant function, by Theorem \ref{thrm-pwc-implementation}, so the integral term in $\hat{R}_0(\tilde{d}_h; h)$ \eqref{risk-biasedEstimate} is zero and our penalty is linear. 
We adopt a degree-zero trend-filter parameterization \citep{Tibshirani2014} for $\tilde{d}_h(x)$, which 
letting $\tilde{d}_h(x) = \beta_0 + \sum_{i=1}^{n-1} \beta_i 1(x \geq x_i)$ and setting $\beta_0 = d(X_{1})$ and $\beta_i = d(X_{i+1}) - d(X_{i})$. Now, for a fixed $h > 0$ and knots $x_1 < \dots< x_{n-1}$ from Theorem \ref{thrm-pwc-implementation}, we define
\begin{align*}
\mathbf{x} &= \begin{bmatrix} X_1 \\ \vdots \\ X_n \end{bmatrix}, &
\mathbf{d} &= \begin{bmatrix} d(X_1) \\ \vdots \\ d(X_n) \end{bmatrix}, &
\mathbf{f}_h &= \begin{bmatrix} \hat{f}_h(x_1) \\ \vdots \\ \hat{f}_h(x_{n-1}) \end{bmatrix}, &
\mathbf{D} &= \begin{bmatrix} 1 & -1 & 0 & \dots & 0 & 0 \\ 0 & 1 & -1 & \dots & 0 & 0 \\ \vdots & \vdots & \vdots & \ddots & \vdots & \vdots \\  0 & 0 & 0 &\dots & 1 & -1 \end{bmatrix}, 
\end{align*}
and express \eqref{eq-estimator0} as
\begin{equation}
\label{eq-penaltyabsorption}
\mathbf{\tilde{d}_h} = \arg\min_{d \in \mathcal{D}_{0,n}} \Vert \mathbf{x} - \mathbf{d} \Vert_2^2 + 2 n \sigma^2 \mathbf{w}_h' \mathbf{d} - n \sigma^2 = \arg\min_{d \in \mathcal{D}_{0,n}} \left\Vert \left( \mathbf{x} - n \sigma^2 \mathbf{w}_h \right) - \mathbf{d} \right\Vert_2^2 ,
\end{equation}
where $\mathbf{w}_h = \mathbf{D}' \mathbf{f}_h$ and the last equality absorbs the linear penalty into the least-squares loss, motivated by the work of \citet{WuMeyerOpsomer2015} on penalized isotonic regression. Since our minimization is now a monotone least-squares regression problem, the pooled adjacent violators algorithm can be used to efficiently compute $\tilde{d}_h(X_i)$ for each $i = 1, \dots, n$. 
Interestingly, \eqref{eq-penaltyabsorption} can be interpreted as an unpenalized isotonic regression of sorted and suitably modified $X_i$.

Theorem \ref{thrm-pwc-rate} shows that when $h > 0$ is properly chosen, our $\tilde{d}_h(x)$ has asymptotically optimal regret. This result identifies a rate for $h \to 0$ that optimally trades-off the bias and variance of the risk estimate. In simulations, this asymptotic rate seems works well as a default value for $h$, and other standard kernel density bandwidth selection methods, such as Silverman's rule-of-thumb and cross-validation of the mean integral squared error \citep{Silverman1986, Tsybakov2009} can be used as well. 

\begin{theorem}
\label{thrm-pwc-rate}
Let $X_i \sim N(\theta_i, \sigma^2)$ be independent such that $\sigma^2$ is known and $\max_i \vert \theta_i \vert < C_n$. Further, let $b_n = \{(8/3) \sigma^2 \log(n) \}^{1/2}$ and $h_n > 0$ such that $h_n \asymp \sigma n^{-1/6}$. Define $\tilde{d}_{h_n}(x)$ as in \eqref{eq-estimator0} and let $d^*(x)$ be the optimal separable estimator \eqref{oracle}, then
$$
R(\tilde{d}_{h_n}; \theta) - R(d^*; \theta) = \mathcal{O}\left\{ n^{-1/3} \left(C_n + \log^{1/2} n \right)^2 \right\} .
$$
\end{theorem}

This result demonstrates that monotonicity alone can produce an asymptotically optimal estimator; however, the use of a larger function class results in a slower convergence of excess risk, likely due to the bias in the risk estimate \eqref{risk-biasedEstimate}. 
On the other hand, $\tilde{d}_h(x)$ can be approximately computed in $\mathcal{O}(n \log n)$ time by placing knots at the average of consecutive order statistics and using the fast Fourier transformation to approximate $\hat{f}_h(x)$. This suggests that $\tilde{d}_h(x)$ can be useful for large simultaneous estimation problems.

It is also possible to achieve asymptotic regret after removing the monotonicity shape constraint. Corollary \ref{corollary-pwc-rate} below demonstrates that the class of bounded functions with bounded total variation is sufficient. The tradeoff is that this introduces an additional tuning parameter, though this can be estimated using a conservative high probability upper bound as we did in Section \ref{section-ecdf}. 
Removing the monotonicity constraint means that more general convex optimization software is required to fit the estimator; however, Theorem \ref{thrm-pwc-implementation} still applies to this larger function class. The proof of Corollary \ref{corollary-pwc-rate} follows almost immediately from the proof of Theorem \ref{thrm-pwc-rate}. 

\begin{corollary}
\label{corollary-pwc-rate}
Let $X_i \sim N(\theta_i, \sigma^2)$ be independent such that $\sigma^2$ is known and $\max_{i=1}^n \vert \theta_i \vert < C_n$. Further, let $b_n = \{(8/3) \sigma^2 \log(n) \}^{1/2}$, $\lambda_n = \max_{i=1}^n X_i - \min_{i=1}^n X_i + 2 b_n$, and $h_n > 0$ such that $h_n \asymp \sigma n^{-1/6}$. Define the new function class: 
$$
\mathcal{C}_{0,n} = \left\{ d:\mathbb{R}\to[\min_i X_i - b_n, \max_i X_i + b_n] \mid TV(d) \leq \lambda_n \right\} , 
$$
and the corresponding estimator $\bar{d}_{h_n} = \arg\min_{d\in\mathcal{C}_{0,n}} \hat{R}_0(d; h_n)$. Let $d^*(x)$ be the optimal separable estimator \eqref{oracle}, then
$$
R(\bar{d}_{h_n}; \theta) - R(d^*; \theta) = \mathcal{O}\left\{ n^{-1/3} \left(C_n + \log^{1/2} n \right)^2 \right\} .
$$
\end{corollary}

The estimators $\bar{d}(x)$ and $\tilde{d}_h(x)$ exhibit two interesting connections with empirical Bayes $f$-estimators. Firstly, as noted by \citet{KoenkerMizera2014}, the monotonicity constraint is not necessary but it can improve the efficiency of nonparametric empirical Bayes estimators and removes a tuning parameter. Secondly, the monotone $f$-estimator proposed by \citet{KoenkerMizera2014} is a piecewise constant estimator, just like $\tilde{d}_h(x)$; however, because our $\hat{f}_h(x)$ is not necessarily a log-concave density, there is no guarantee that the two estimators are equal for any value of $h > 0$. 

\section{Simulations}
\label{section-sims}

\subsection{Methods}

We compared the performances of our constrained SURE estimator \eqref{eq-estimator1}, $\hat{d}(x)$, and monotone-only estimator \eqref{eq-estimator0}, $\tilde{d}_h(x)$, to those of state-of-the-art nonparametric empirical Bayes estimators for the canonical Gaussian sequence problem. Specifically, we compared to the $g$-modeling $\hat{d}_{JZ}(x)$ of \citet{JiangZhang2009},  using 300 regularly spaced knots along $[\min_{i=1}^n X_i, \max_{i=1}^n X_i]$, and the $f$-modeling $\hat{d}_{BG}(x)$ of \citet{BrownGreenshtein2009}, using their recommended bandwidth $h = \sigma (\log n)^{-1/2}$.

When fitting our $\tilde{d}_h(x)$, we made no attempt to tune the bandwidth and instead set it to its theoretically optimal rate from Theorem \ref{thrm-pwc-rate}. Alternative bandwidth choices are explored in Appendix A of the Supplementary Material. When fitting our $\hat{d}(x)$, we pursued three distinct methods for selecting the total-variation constraint $TV(d') \leq \tau_n$. The first method was to set $\tau_n$ equal to the high-probability upper bound $\sigma^{-2}\{ \max_{i=1}^n X_i - \min_{i=1}^n X_i + (3 \sigma^2 \log n)^{1/2} \}^2$ studied in Theorem \ref{thrm-pwl-rate}. The second method was to develop a plug-in estimate of $TV(d^{*\prime})$. We first estimated
$$
d^{*\prime\prime}(x) = \sigma^2 \frac{f_\theta^{(3)}(x)}{f_\theta(x)} - 3 \sigma^2 \frac{f^{\prime\prime}(x) f^{\prime}(x)}{f_\theta(x)^2} + 2 \sigma^2 \left\{\frac{f^{\prime}(x)}{f_{\theta}(x)}\right\}^3 
$$
by plugging-in $X_i$ for $\theta_i$ to estimate each of the $f^{(k)}_\theta(t)$ for each $k \in\{0,1,2,3\}$, then we used numerical integration to approximate $TV(d^{*\prime}) = \int_{-\infty}^{\infty} \vert d^{*\prime\prime}(x) \vert dx$. In practice this often out-performed the conservative high-probability bound. The final method was to choose $\tau_n$ using $k$-fold cross-validation, where we estimated out-of-sample risk using  $\hat{R}_1(d)$ \eqref{SURE} applied to the $\hat{d}(x)$ fitted on the training folds. In the results below we used $k = 5$.

We fit $\tilde{d}_h(x)$ using knots at the average of each consecutive order statistic of $X_1, \dots, X_n$, and we fit $\hat{d}(x)$ using 30 regularly spaced knots along $(\min_{i=1}^n X_i, \max_{i=1}^n X_i)$. Alternative knot placement strategies are explored in Appendix A. We provide \texttt{R} implementations of our methods in the \texttt{cole} package (https://github.com/sdzhao/cole). Figure \ref{fig-methods} visualizes each of these estimators applied to the same data, along with the optimal estimator $d^\star(x)$ \eqref{oracle} as a reference.

\begin{figure}[t!]
  \includegraphics[width=\linewidth]{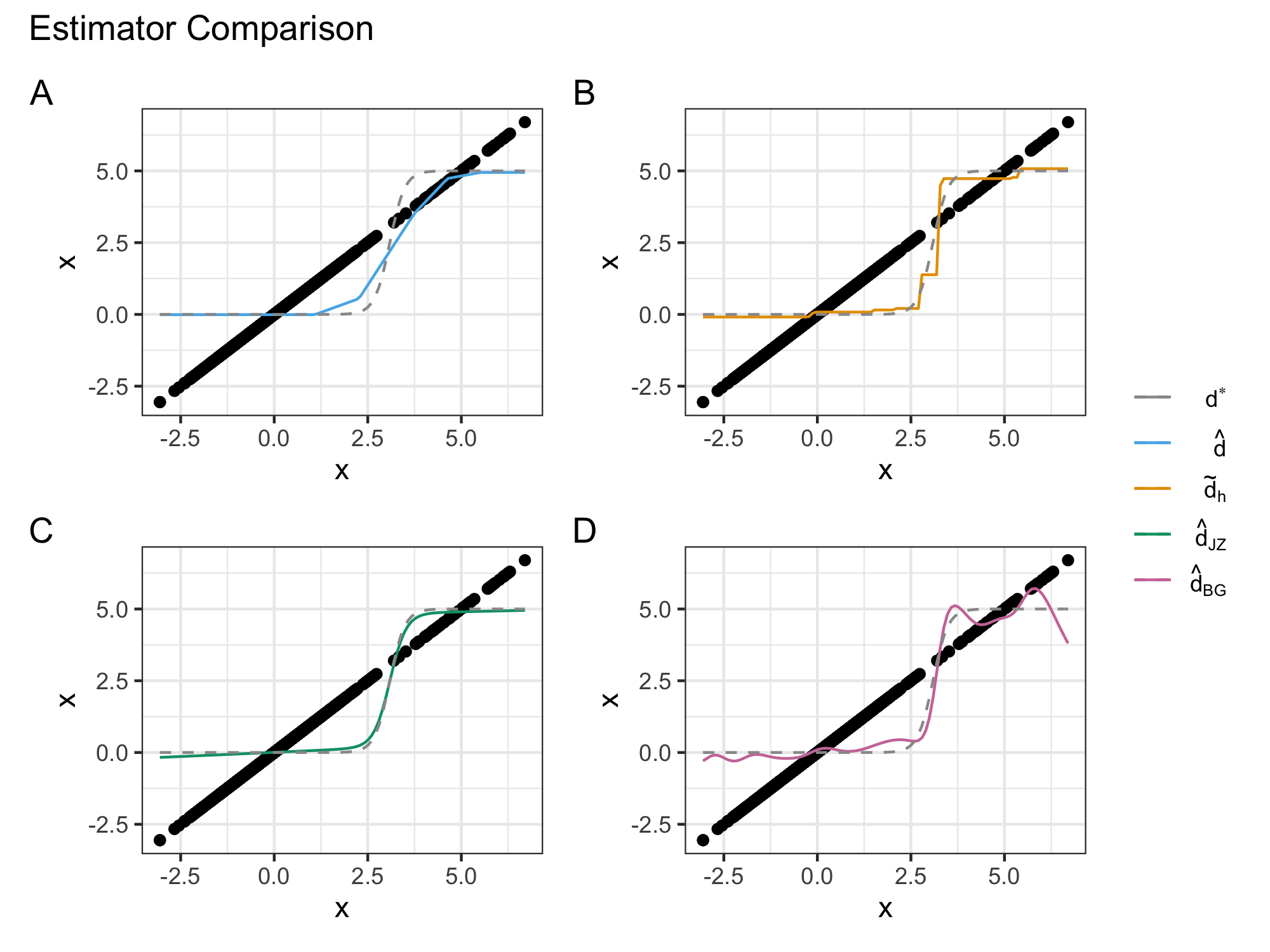}
  \caption{Visual comparison of different estimators on a sample of $n = 1000$ data points. Solid points correspond to observed $X_i$. $d^\star$: oracle estimator \eqref{oracle}; $\hat{d}$: proposed constrained SURE minimizer \eqref{eq-estimator1} using a plug-in estimate for $\tau_n$; $\tilde{d}_h$: proposed monotone-only estimator \eqref{eq-estimator0} using its theoretically optimal bandwidth; $\hat{d}_{JZ}$: estimator of \citet{JiangZhang2009}; $\hat{d}_{BG}$: estimator of \citet{BrownGreenshtein2009}.}
  \label{fig-methods}
\end{figure}

\subsection{High-Noise Setting}

We first compared these methods under a standard high-noise setting where $X_i \sim N(\theta_i, \sigma^2)$ and $\sigma^2 = 1$. We fixed $n = 1000$ and considered sparse configurations, where many of the $\theta_i$ were zero, and dense configurations, where all of the $\theta_i$ were nonzero. Estimation of sparse mean vectors has been thoroughly studied in the literature, for example by \citet{JohnstoneSilverman2004}, but there are many problems of interest, such as denoising gene expression levels from single-cell RNA-sequencing data \citep{huang2018saver, li2018accurate}, 
that are not well captured by the sparse model. Throughout, we evaluated the performance of each estimator using the sum of squared errors $\sum_{i=1}^n (\theta_i - \hat{\theta}_i)^2$, where $\hat{\theta}_i$ is the corresponding estimate of $\theta_i$, averaged over 50 replications.

\begin{table}[t!]
\centering
\begin{tabular}{l|r|r|r|r|r|r|r|r|r|r|r|r}
\hline
\multicolumn{1}{c|}{nonzero} & \multicolumn{4}{c|}{5} & \multicolumn{4}{c|}{50} & \multicolumn{4}{c}{500} \\
\cline{1-1} \cline{2-5} \cline{6-9} \cline{10-13}
\multicolumn{1}{c|}{$\mu$} & \multicolumn{1}{c|}{3} & \multicolumn{1}{c|}{4} & \multicolumn{1}{c|}{5} & \multicolumn{1}{c|}{7} & \multicolumn{1}{c|}{3} & \multicolumn{1}{c|}{4} & \multicolumn{1}{c|}{5} & \multicolumn{1}{c|}{7} & \multicolumn{1}{c|}{3} & \multicolumn{1}{c|}{4} & \multicolumn{1}{c|}{5} & \multicolumn{1}{c}{7} \\
\cline{1-1} \cline{2-2} \cline{3-3} \cline{4-4} \cline{5-5} \cline{6-6} \cline{7-7} \cline{8-8} \cline{9-9} \cline{10-10} \cline{11-11} \cline{12-12} \cline{13-13}
$\hat{d}_{UB}$ & 84 & 66 & 58 & 38 & 214 & 154 & 88 & 50 & 528 & 357 & 176 & 57\\
\hline
$\hat{d}_{PI}$ & 41 & 39 & 33 & 24 & 169 & 128 & 69 & 29 & 500 & 345 & 166 & 38\\
\hline
$\hat{d}_{CV}$ & 47 & 43 & 36 & 21 & 171 & 123 & 65 & 31 & 479 & 311 & 145 & 41\\
\hline
$\tilde{d}_h$ & 42 & 37 & 31 & 17 & 179 & 126 & 65 & 25 & 485 & 316 & 150 & 33\\
\hline
$\hat{d}_{JZ}$ & 39 & 34 & 23 & 11 & 157 & 105 & 58 & 14 & 459 & 285 & 139 & 18\\
\hline
$\hat{d}_{BG}$ & 53 & 49 & 42 & 27 & 179 & 136 & 81 & 40 & 484 & 302 & 158 & 48\\
\hline
\end{tabular}
\caption{\label{table-sparse}
  Sums of squared errors for sparse normal mean vectors under high noise.  $\hat{d}_{UB}$, $\hat{d}_{PI}$, and $\hat{d}_{CV}$: proposed constrained SURE estimator \eqref{eq-estimator1} with high-probability upper bound on $TV(d^{*\prime})$, plug-in estimate of $TV(d^{*\prime})$, and cross-validated $\tau_n$, respectively; $\tilde{d}_h$: proposed monotone-only estimator \eqref{eq-estimator0} using its theoretically optimal bandwidth; $\hat{d}_{JZ}$: estimator of \citet{JiangZhang2009}; $\hat{d}_{BG}$: estimator of \citet{BrownGreenshtein2009}.}
\end{table}

In the sparse configuration, we followed \citet{JohnstoneSilverman2004} and set $k \in \{5,50,500\}$ of the $\theta_i = \mu$ for $\mu \in \{3,4,5,7\}$, letting the remaining $\theta_i$ to zero. Table \ref{table-sparse} reports the results, where the errors for the estimators of \citet{JiangZhang2009} and \citet{BrownGreenshtein2009} were copied from the corresponding papers. For $\hat{d}_{JZ}(x)$ we estimated the prior using convex optimization, as described by \citet{KoenkerMizera2014} and implemented in the REBayes R package of \citet{koenker2017rebayes}. Our nonparametric regression estimators performed comparably to the nonparametric empirical Bayes estimators over a range of sparse settings. In most settings, $\hat{d}_{JZ}(x)$ had the best performance, followed closely by our $\tilde{d}_h(x)$, then our $\hat{d}(x)$, and finally by $\hat{d}_{BG}(x)$. 

\begin{table}[t!]
\centering
\begin{tabular}{l|r|r|r|r}
\hline
 & Uniform(0, 5) & N(0,1) & Laplace(0,1) & 0.5 N(-3,1) + 0.5 N(3,2) \\
\hline
$\hat{d}_{UB}$ & 709 & 566 & 546 & 853\\
\hline
$\hat{d}_{PI}$ & 647 & 514 & 501 & 794\\
\hline
$\hat{d}_{CV}$ & 653 & 520 & 507 & 800\\
\hline
$\tilde{d}_h$ & 670 & 532 & 523 & 834\\
\hline
$\hat{d}_{JZ}$ & 646 & 511 & 497 & 795\\
\hline
$\hat{d}_{BG}$ & 674 & 535 & 526 & 838\\
\hline
\end{tabular}
\caption{\label{table-dense}
  Sums of squared errors for dense normal mean vectors under high noise. Estimators are the same as those in Table \ref{table-sparse}.}
\end{table}

In the dense configuration, we generated $\theta_i$ by sampling them from an absolutely continuous prior distribution $G$ and then fixed them across replications. We considered a diverse collection of $G$: 1) the uniform distribution on $[0,5]$; 2) the Normal distribution with mean zero and variance of one; 3) the Laplace distribution with mean zero and variance of one; and 4) an asymmetric two-component Gaussian mixture with means at -3 and 3 and and variances of 1 and 2. These represent various tail bounds and problem complexities. In this scheme, the tail bound $E \max_{i=1}^n \vert \theta_i \vert$ plays the role similar to $C_n$ in our excess risk rate theorems. Table \ref{table-dense} reports the results and demonstrates a similar ranking of estimator performance as Table \ref{table-sparse}, though our $\hat{d}(x)$ with the plug-in estimate of $\tau_n$ works better than our $\tilde{d}_h(x)$ here. Our proposed estimators were more competitive with $\hat{d}_{JZ}(x)$ in dense configurations compared to the sparse configurations.

\subsection{\label{section-low-noise}Low-Noise Setting}

We also compared estimators under a low noise setting where we repeated the simulations in Tables \ref{table-sparse} and \ref{table-dense} but set $\sigma^2 = 10^{-8}$. As mentioned in Section \ref{section-intro}, this setting mimics more complex simultaneous estimation problems where observations have densities that can take extremely small values on most of the support of the unknown parameters. The introduction gave an example arising from multivariate linear regression. As another illustration, \citet{saha2020on} and \citet{soloff2021multivariate} studied nonparametric empirical Bayes estimation of $d$-dimensional $\theta_i$. In particular, if the observations $X_i \sim N(\theta_i, I)$, where $I$ is the $d \times d$ identity matrix, they assume $\theta_i \sim G(t)$ and estimate $G(t)$ by maximizing
\[
  \prod_{i = 1}^n \int \frac{1}{(2 \pi)^{1/2}} \exp\left\{-\frac{1}{2} \sum_{j = 1}^d (X_{ij} - t_j)^2\right\} dG(t)
\]
over discrete distributions supported on a pre-specified grid, where $X_{ij}$ is the $j$th coordinate of $X_i$. However, when $d$ is large, the density in the integrand can be extremely small at most of the grid points, which can cause difficulties for convex optimization software.

We encountered this exact difficulty in our low-noise univariate Gaussian sequence problem. The nonparametric empirical Bayes methods became less reliable. For example, in our simulations we tried to implement $\hat{d}_{JZ}(x)$ using convex optimization, but the underlying MOSEK optimizer frequently failed. We resorted to the EM approach of \citet{JiangZhang2009}, and even there we needed to artificially left-censor low density values to equal the smallest positive floating-point number supported by our machine. In our results we report only the results of our EM-based implementation. Our plug-in estimate of $TV(d^{*\prime})$ suffered from numerical stability issues in this regime as well, so we do not report its performance.

\begin{table}[t!]
\centering
\begin{tabular}{l|r|r|r|r|r|r|r|r|r|r|r|r}
\hline
\multicolumn{1}{c|}{nonzero} & \multicolumn{4}{c|}{5} & \multicolumn{4}{c|}{50} & \multicolumn{4}{c}{500} \\
\cline{1-1} \cline{2-5} \cline{6-9} \cline{10-13}
\multicolumn{1}{c|}{$\mu$} & \multicolumn{1}{c|}{3} & \multicolumn{1}{c|}{4} & \multicolumn{1}{c|}{5} & \multicolumn{1}{c|}{7} & \multicolumn{1}{c|}{3} & \multicolumn{1}{c|}{4} & \multicolumn{1}{c|}{5} & \multicolumn{1}{c|}{7} & \multicolumn{1}{c|}{3} & \multicolumn{1}{c|}{4} & \multicolumn{1}{c|}{5} & \multicolumn{1}{c}{7} \\
\cline{1-1} \cline{2-2} \cline{3-3} \cline{4-4} \cline{5-5} \cline{6-6} \cline{7-7} \cline{8-8} \cline{9-9} \cline{10-10} \cline{11-11} \cline{12-12} \cline{13-13}
$\hat{d}_{UB}$ & 3.42 & 4.74 & 4.79 & 5.45 & 2.75 & 5.07 & 5.15 & 5.22 & 2.09 & 5.07 & 5.99 & 5.57\\
\hline
$\hat{d}_{CV}$ & -17.76 & -17.43 & -17.58 & -17.51 & -17.41 & -17.82 & -17.79 & -17.68 & -17.69 & -17.68 & -17.85 & -17.88\\
\hline
$\tilde{d}_h$ & -17.78 & -17.53 & -17.70 & -17.70 & -17.95 & -17.82 & -17.78 & -17.69 & -17.70 & -17.72 & -17.84 & -17.88\\
\hline
$\hat{d}_{BG}$ & -11.52 & -11.52 & -11.52 & -11.51 & -11.50 & -11.51 & -11.50 & -11.50 & -11.51 & -11.52 & -11.51 & -11.52\\
\hline
$\hat{d}_{JZ}$ & -9.10 & -9.21 & -9.14 & -9.12 & -9.19 & -9.17 & -9.15 & -9.15 & -9.25 & -9.29 & -9.27 & -9.28\\
\hline
\end{tabular}\caption{\label{table-sparse-low-var}Natural logs of sums of squared errors for sparse normal mean vectors under low noise. The estimators used in this table are the same as those used in Table \ref{table-sparse}.}
\end{table}

\begin{table}[t!]
\centering
\begin{tabular}{l|r|r|r|r}
\hline
method & Uniform(0, 5) & N(0,1) & Laplace(0,1) & 0.5 N(-3,1) + 0.5 N(3,2) \\
\hline
$\hat{d}_{UB}$ & 3.24 & 7.83 & 7.50 & 8.76\\
\hline
$\hat{d}_{CV}$ & -1.40 & -4.11 & -2.42 & -1.46\\
\hline
$\tilde{d}_h$ & -10.93 & -11.12 & -11.16 & -11.05\\
\hline
$\hat{d}_{BG}$ & -11.52 & -11.50 & -11.51 & -11.53\\
\hline
$\hat{d}_{JZ}$ & 6.97 & 6.38 & 6.44 & 9.19\\
\hline
\end{tabular}
\caption{\label{table-dense-low-var}Natural logs of sums of squared errors for dense normal mean vectors under low noise. The estimators used in this table are the same as those used in Table \ref{table-dense}.}
\end{table}

Tables \ref{table-sparse-low-var} and \ref{table-dense-low-var} report the performances of each estimator for sparse and dense configurations, respectively. Here we quantify performance using $\log \sum_{i=1}^n (\theta_i - \hat{\theta}_i)^2$ averaged over 50 replications, where we take natural log because the sum of squared errors is otherwise already very low with such small noise. The results demonstrate that some of our proposed methods, with appropriate tuning, can match or outperform the state-of-the-art nonparametric empirical Bayes methods when $\sigma$ is small. Specifically, our monotone-only $\tilde{d}_h(x)$ \eqref{eq-estimator0} was the top performer in the sparse configurations and nearly matched the estimator of \citet{BrownGreenshtein2009} in the dense configurations. The $g$-modeling approach of \citet{JiangZhang2009}, which was the top performer in the high-noise settings, had difficulty with such small $\sigma^2$.

\section{Data analysis}
\label{section-analysis}

We applied our proposed approaches to denoise spatial transcriptomic data. Spatial transcriptomics is a recently developed set of experimental techniques that can profile gene expression from regions of intact tissue sections at very high spatial resolution \citep{moffitt2022emerging}. This additional spatial information has the potential to revolutionize our understanding of biological processes and is one reason why these technologies were named ``Method of the Year'' in 2021 by the journal \textit{Nature Methods} \citep{marx2021method}. Here we study data from a segment of the mouse small intestine, collected by \citet{petukhov2022cell} using a spatial transcriptomic technique called MERFISH \citep{chen2015spatially}.

Despite its additional spatial context, these technologies still suffer from issues of detection efficiency and other experimental errors. The resulting measurements can thus be viewed as noisy observations of true expression levels. It has been demonstrated that denoising raw gene expression values can be a useful preprocessing step in non-spatial single-cell RNA-sequencing \citep{eraslan2019single, huang2018saver, li2018accurate} and very recently in spatial transcriptomics \citep{wang2022region}.

In this section we apply our proposed simultaneous estimation methods to denoise MERFISH gene expression data from the mouse ileum. To formulate the problem, for a given cell let $Y_i$ denote the observed expression of gene $i$, for each of the $i = 1, \ldots, 241$ genes measured by \citet{petukhov2022cell}. We modeled the $Y_i$ as Poisson-distributed random variables and applied the Anscombe transform $X_i = 2 (Y_i + 3 / 8)^{1/2}$ to obtain roughly normally-distributed $X_i$ with unit variance. Our goal was to denoise the $X_i$ by simultaneously estimating the $\theta_i = E(X_i)$. Ideally we would measure the performance of estimates $\hat{\theta}_i$ using the mean squared error $\sum_i (\theta_i - \hat{\theta}_i)^2 / 241$, but the true $\theta_i$ are unknown. We therefore approximated $\theta_i$ using the average expression of gene $i$ over the three cells closest to the cell being denoised. We denoised 1,000 randomly selected cells, shown in Figure \ref{fig-ileum}.

\begin{figure}[t!]
  \centering
  \includegraphics[width=0.6\linewidth]{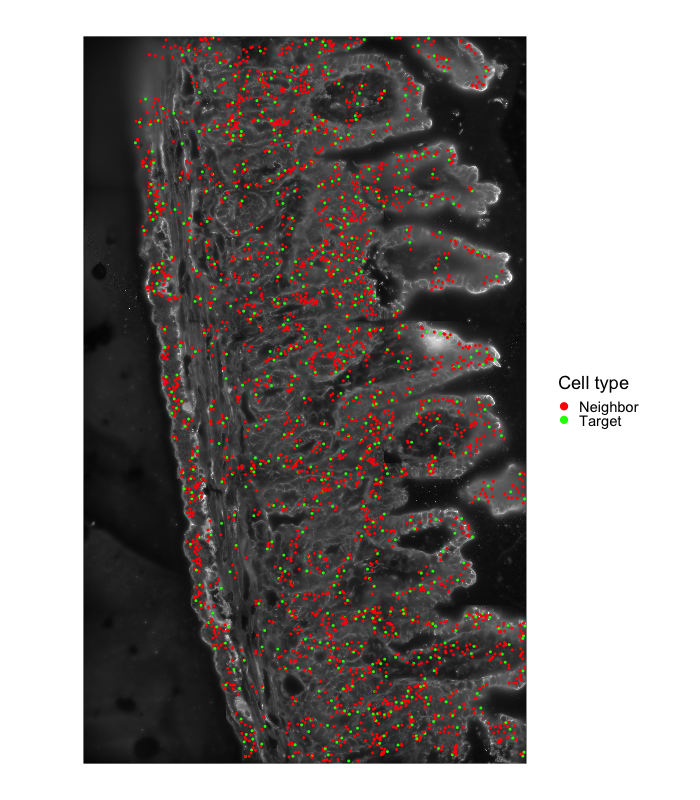}
  \caption{\label{fig-ileum}Spatial transcriptomic data from the mouse ileum \citep{petukhov2022cell}. One thousand target cells (green) were randomly selected to be denoised, and the three nearest neighbors of each cell (red) were used to assess denoising accuracy.}
\end{figure}

We applied the following estimators: the observed $X_i$, our proposed regression-based estimators $\hat{d}_{CV}(x)$ \eqref{eq-estimator1} and $\tilde{d}_h(x)$ \eqref{eq-estimator0}, and the state-of-the-art nonparametric empirical Bayes methods $\hat{d}_{BG}(x)$ \citep{BrownGreenshtein2009} and $\hat{d}_{JZ}(x)$ \citep{JiangZhang2009}. The average mean squared errors across all 1,000 cells was 0.3845, 0.3599, 0.3794, 0.5881, and 0.4011, respectively. By this measure, our proposed methods were the best performers. For a more detailed comparison of the estimators, for each cell we subtracted the mean squared error of $\hat{d}_{JZ}(x)$ from the mean squared errors of the other estimators. Figure \ref{fig-data} plots the histograms of these differences and shows that our $\hat{d}_{CV}(x)$ and $\tilde{d}_h(x)$ outperformed $\hat{d}_{JZ}(x)$ on most of the cells.

\begin{figure}[t!]
  \centering
  \includegraphics[width=0.5\linewidth]{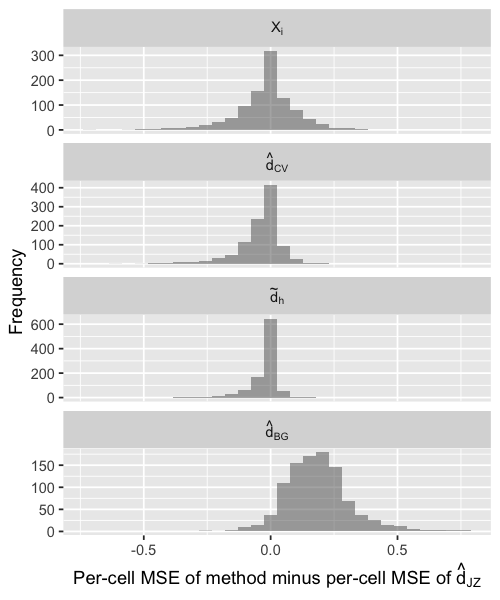}
  \caption{\label{fig-data}Histograms of the mean squared errors of different estimators minus the mean squared error of $\hat{d}_{JZ}(x)$ \citep{JiangZhang2009}, for denoising the expressions of 241 genes in each of 1,000 randomly selected cells from the mouse ileum \citep{petukhov2022cell}.}
\end{figure}

\section{Discussion}
\label{section-disc}

Nonparametric empirical Bayes approaches to simultaneous estimation are the current standard, but are philosophically challenging to describe and also face performance issues when the measurement error noise is low. We have attempted to address these issues by showing that the problem can be formulated as penalized, self-supervised, nonparametric least-squares regression. This enabled us to apply empirical risk minimization ideas to construct estimators motivated entirely by frequentist ideas that can still achieve similar levels of accuracy, in theory and in simulations, as the state-of-the-art methods. Furthermore, our estimators can outperform existing methods in low-noise settings that model more complex simultaneous estimation problems.

One potential additional benefit of our regression perspective is that it can naturally accommodate complicated decision rules, for example those that leverage covariate or structural information to estimate each $\theta_i$. This information often arises in settings where there are replications \citep{Ignatiadis2021}, results from auxiliary experiments \citep{BanerjeeMukherjeeSun2020,Zhao2021}, regular dependence structures  \citep{GreenshteinMantzuraRitov2019, Lehtinen2018}, and known heteroskedastic variances \citep{weinstein2018group,XieKouBrown2012,XieKouBrown2016,Tan2015,ZhaoBiscarri2021}. Regression modeling offers a natural way to specific, either parametrically or nonparametrically, how the additional information is believed to be related to the $\theta_i$. A multi-dimensional version of Stein's lemma can then be used derive corresponding risk estimates to be minimized. For example, given covariates $C_{i1}, \ldots, C_{ip}$ in addition to each observation $X_i$, we could posit a generalized additive model $g(X_i) + f(C_{i1}, \dots, C_{ip})$ to estimate $\theta_i$ \citep{HastieTibshirani1986, HastieFriedmanTibshirani2001}. This decomposition is closely related to the hierarchical Bayesian model proposed by \citet{ignatiadis2019covariate} under an empirical Bayes framework. For more complicated problems, modern deep learning architectures can be easily incorporated into our regression approach.

Finally, we conjecture that the performance of our estimators could be improved by incorporating additional shape constraints. For example, the nonparametric $g$-modeling estimator $\hat{d}_{JZ}(x)$ of \citet{JiangZhang2009} is, by construction, a Gaussian posterior mean, and therefore inherits a great deal of smoothness properties such as infinite, continuous, bounded derivatives. In contrast, this is not true of all functions in the two function classes we studied. It is possible that applying our framework to smaller classes of smoother functions would result in improved estimators. Related work by \citet{Zhao2021}, who applied empirical risk minimization over a function class motivated by the form of the oracle $d^\star(x)$ estimator, offers some evidence that this might be the case.

\section*{Supplementary Materials}

The supplementary material contains additional simulations and the proofs of our main and supporting results.
\par
\section*{Acknowledgements}

The authors would like to thank Sabyasachi Chatterjee for his helpful discussions.
\par

\bibliographystyle{abbrvnat}      
\bibliography{paper}             

\end{document}


\maketitle

\begin{appendix}

\section{Additional Simulations}

\subsection{Knots for the Constrained SURE Estimator, $\hat{d}(x)$}

Table \ref{table-pwl-knots} investigates knot placement for our constrained SURE estimator \eqref{eq-estimator1}, $\hat{d}(x)$, across a selection of the simulation settings from Section \ref{section-sims}. Throughout these simulations $n = 1000$ and $\sigma^2 = 1$. The Uniform setting draws $\theta_i \sim iid ~ Uniform(0,5)$; the Normal setting draws $\theta_i \sim iid ~ N(0,1)$; the Laplace setting draws $\theta_i \sim iid ~ Laplace$ with a mean of zero and a variance of one; and the ``Sparse k'' settings refer to the settings in Table \ref{table-sparse} where there $k$ of the $\theta_i = 5$ and the rest are zero. The ``M Regular'' methods set $M$ evenly spaced knots along the range of $X_1, \dots, X_n$; the ``Order Statistics'' method places knots at each observation; and the ``Percentiles'' places knots at the first, second, $\dots$, ninety-ninth percentiles of $X_1, \dots, X_n$.

\begin{table}[ht]
\centering
\begin{tabular}{l|r|r|r|r|r|r}
\hline
method & Uniform & Normal & Laplace & Sparse 5 & Sparse 50 & Sparse 500\\
\hline
1000 Regular & 647 & 514 & 503 & 38 & 79 & 175\\
\hline
300 Regular & 647 & 514 & 502 & 37 & 78 & 175\\
\hline
30 Regular & 647 & 514 & 501 & 30 & 75 & 176\\
\hline
Order Statistics & 647 & 514 & 503 & 40 & 81 & 175\\
\hline
Percentiles & 647 & 516 & 511 & 118 & 81 & 175\\
\hline
\end{tabular}
\caption{\label{table-pwl-knots}Table values are the total squared errors $\sum_{i=1}^n (\theta_i - \hat{\theta}_i)^2$ averaged across 50 replications. In each simulation $\tau_n$ is first estimated using the plug-in estimator of $\tau_n$, discussed in Section \ref{section-sims}, then $\hat{d}(x)$ is fit with each of the knot placements and the performance is measured.}
\end{table}

We see that in general the knots placement makes very little difference to the efficiency of $\hat{d}(x)$. In fact, we observe that reducing the number of knots can sometimes have a regularizing effect on the estimator. The only example in Table \ref{table-pwl-knots} where knot placement has a significant negative impact on the performance of $\hat{d}(x)$ is when percentiles are used as knots in the ``Sparse 5'' setting. Here, there are five non-zero $\theta_i$ but because the knots are placed every ten observations, $\hat{d}(x)$ cannot adapt to the very sparse means. Because fitting $\hat{d}(x)$ takes longer with more knots, we use 30 regular knots for our simulations in Section \ref{section-sims} to provide good performance and fast computation.

\subsection{Knots and Density Computation for the Monotone-Only Estimator Knots, $\tilde{d}_h(x)$}

Table \ref{table-pwc-knots} compares various knot placement and density computation methods for our monotone-only estimator \eqref{eq-estimator0}, $\tilde{d}_h(x)$. Specifically we look at the two knot placement strategies discussed in Section \ref{section-gkde}: the optimal knots, ``Opt.'' given by Theorem \ref{thrm-pwc-implementation} and the approximate knots, ``Approx.'' placed at the average of consecutive order statistics of $X_1, \dots, X_n$. Additionally, we look at the performance of the two methods of computing $\hat{f}_h(t)$: the exact method,``Exact'', and the Fast Fourier Transform, ``FFT'', approximate method. The simulation settings are exactly the same as those described before Table \ref{table-pwl-knots}.

\begin{table}[ht]
\centering
\begin{tabular}{l|l|r|r|r|r|r|r}
\hline
Knots & Density & Uniform & Normal & Laplace & Sparse 5 & Sparse 50 & Sparse 500\\
\hline
Approx. & FFT & 670 & 532 & 523 & 26 & 75 & 157\\
\hline
Approx. & Exact & 670 & 532 & 523 & 26 & 75 & 158\\
\hline
Opt. & FFT & 672 & 533 & 522 & 27 & 76 & 158\\
\hline
Opt. & Exact & 672 & 533 & 522 & 28 & 76 & 158\\
\hline
\end{tabular}
\caption{\label{table-pwc-knots}Table values are the total squared errors $\sum_{i=1}^n (\theta_i - \hat{\theta}_i)^2$ averaged across 50 replications. In each simulation, we use the asymptotic rate for bandwidth: $h = \sigma n^{-1/6}$.}
\end{table}

We observe that the approximate knots seem to slightly out-perform the optimal knots. This is slightly surprising; however, we recall that the optimal knots are chosen to be optimal for the risk estimate, not the out-of-sample risk. Furthermore, we observe that because the FFT method is so accurate, there is virtually no difference in efficiency between the two methods of computing $\hat{f}_h(t)$. Based on these observations, we recommend and use the approximate knots and FFT approximation of $\hat{f}_h(t)$, for fast computations and good simulation performance.

\subsection{Bandwidth for the Monotone-Only Estimator, $\tilde{d}_h(x)$}

Table \ref{table-pwc-bandwidths} compares various bandwidth selection methods for our monotone-only estimator \eqref{eq-estimator0}, $\tilde{d}_h(x)$. We use a fixed grid of bandwidths to study how the optimal bandwidth may vary across simulation settings and how robust $\tilde{d}_h(x)$ is to bandwidth choice. We compare our asymptotic bandwidth rate, $h = \sigma n^{-1/6}$, to common bandwidth estimation methods that are included in the \texttt{stats} package in \texttt{R} \cite{RCoreStats}. The simulation settings are exactly the same as those described before Table \ref{table-pwl-knots}.

\begin{table}[ht]
\centering
\begin{tabular}{l|r|r|r|r|r|r}
\hline
bw & Uniform & Normal & Laplace & Sparse 5 & Sparse 50 & Sparse 500\\
\hline
0.2 & 688 & 545 & 532 & 32 & 84 & 170\\
\hline
0.3 & 672 & 534 & 524 & 26 & 75 & 158\\
\hline
0.4 & 661 & 526 & 518 & 24 & 73 & 156\\
\hline
0.5 & 654 & 522 & 517 & 27 & 76 & 159\\
\hline
0.6 & 651 & 522 & 519 & 36 & 86 & 169\\
\hline
0.7 & 649 & 525 & 525 & 48 & 102 & 186\\
\hline
0.8 & 649 & 530 & 534 & 64 & 124 & 209\\
\hline
SJ & 656 & 528 & 521 & 29 & 78 & 157\\
\hline
Silverman & 661 & 532 & 523 & 30 & 79 & 170\\
\hline
UCV & 658 & 529 & 522 & 30 & 79 & 161\\
\hline
$\sigma n^{-1/6}$ & 670 & 532 & 523 & 26 & 75 & 157\\
\hline
\end{tabular}
\caption{\label{table-pwc-bandwidths}Table values are the total squared errors $\sum_{i=1}^n (\theta_i - \hat{\theta}_i)^2$ averaged across 50 replications. The ``SJ'' bandwidth is the Sheather \& Jones bandwidth \cite{SJ1991}; the ``Silverman'' bandwidth is Silverman's rule-of-thumb bandwidth \cite{Silverman1986}; the ``UCV'' is the unbiased mean integral squared error cross-validation estimate \cite{Tsybakov2009}. We use the approximate knots for each of the simulations.} 
\end{table}

Table \ref{table-pwc-bandwidths} demonstrates that all of the bandwidth selection methods we consider result in nearly optimal estimators across our simulation settings. Moreover, $\tilde{d}_h(x)$ appears to be somewhat robust to bandwidth choice as good estimators can be found over a usefully wide interval, usually around 0.2-0.3 wide. This wide interval of good bandwidths suggests that many heuristic bandwidths may result in efficient estimators. Based on these results, we use our asymptotic bandwidth rate for all of our other simulations because of its theoretical support,  computational speed, and over-all good performance.

\section{Proofs}

\begin{prop}
Let $x_1 < \dots < x_n$ and $d:\mathbb{R}\to\mathbb{R}$ be an absolutely continuous function. If $TV(d') < 2(n-1)$, then there exists an $i \in \{1, \dots, n\}$ such that either $d(x_i) \neq x_i$ or $d'(x_i) \geq 0$.
\end{prop}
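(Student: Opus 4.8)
The plan is to prove the contrapositive. Suppose that for every $i \in \{1,\dots,n\}$ we have both $d(x_i) = x_i$ and $d'(x_i) < 0$; I will show $TV(d') \geq 2(n-1)$. It is convenient to work with $g(x) := d(x) - x$, which is again absolutely continuous, has $g' = d' - 1$, and satisfies $TV(g') = TV(d')$ since the two derivatives differ by a constant. In these terms the hypotheses read $g(x_i) = 0$ and $g'(x_i) = d'(x_i) - 1 < -1$ for all $i$, and it suffices to prove $TV(g') \geq 2(n-1)$.

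The core of the argument is a lower bound of $2$ on the variation of $g'$ contributed by each of the $n-1$ gaps $[x_i, x_{i+1}]$. First I would locate an interior point $s_i \in (x_i, x_{i+1})$ with $g'(s_i) > 0$: absolute continuity gives $\int_{x_i}^{x_{i+1}} g'(t)\,dt = g(x_{i+1}) - g(x_i) = 0$, so $g'$ cannot be $\le 0$ almost everywhere on $(x_i, x_{i+1})$ — otherwise $g'$ would vanish a.e.\ there and $g$ would be constant on $[x_i, x_{i+1}]$, contradicting $g'(x_i) < -1$ — and hence $\{t \in (x_i, x_{i+1}) : g'(t) > 0\}$ has positive Lebesgue measure. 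Testing the total variation of $g'$ over $[x_i, x_{i+1}]$ against the three-point partition $x_i < s_i < x_{i+1}$ and using $g'(x_i) < -1$, $g'(x_{i+1}) < -1$, and $g'(s_i) > 0$ yields
\[
TV\!\left(g';[x_i,x_{i+1}]\right) \;\ge\; \bigl|g'(s_i) - g'(x_i)\bigr| + \bigl|g'(x_{i+1}) - g'(s_i)\bigr| \;>\; 1 + 1 \;=\; 2 .
\]
Summing over $i$ and invoking additivity of total variation across adjacent intervals gives $TV(d') = TV(g') \ge \sum_{i=1}^{n-1} TV\!\left(g';[x_i,x_{i+1}]\right) > 2(n-1)$, which proves the contrapositive (in fact with a strict inequality).

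The one point that needs care is the location of $s_i$ when $d$ is only assumed absolutely continuous, so that $d'$ — and therefore $g'$ — is defined merely almost everywhere: one should fix the representative of $d'$ used to define $TV(d')$ (when $TV(d') < \infty$ this representative may be taken of bounded variation) and check that the positive-measure set found above meets the domain of that representative, so that $s_i$ really can be chosen with $g'(s_i) > 0$; the values $d'(x_i)$ cause no trouble since they are assumed to exist. Apart from this, the ingredients are all routine: the translation to $g$, the fundamental-theorem-of-calculus identity on each gap, evaluation of the partition sum, and finite additivity of $TV$.
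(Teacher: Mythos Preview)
Your proof is correct and follows essentially the same route as the paper: prove the contrapositive by locating, in each gap $[x_i,x_{i+1}]$, an interior point where the derivative is at least $1$ (your $s_i$ with $g'(s_i)>0$ is exactly the paper's $t_i$ with $d'(t_i)\ge 1$), then feed these points together with the $x_i$ into the definition of total variation. The only cosmetic differences are that you pass to $g(x)=d(x)-x$ first and sum interval-wise variations, while the paper works with $d$ directly over one global partition; you also flag the almost-everywhere issue for $d'$, which the paper glosses over.
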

\begin{proof}
We proceed by proving the contrapositive. Let $x_1 < x_2 < \dots < x_n$ be fixed and let $d:\mathbb{R}\to\mathbb{R}$ be an absolutely continuous function such that $d(x_i) = x_i$ and $d'(x_i) \leq 0$ for each $i=1, \dots, n$. Assume for contradiction that for every $t \in [x_1, x_2]$, $d'(t) < 1$. Then by our definition of $g$ and its absolute continuity we have:
\begin{align*}
x_2 - x_1
    &= d(x_2) - d(x_1) \\
    &= \int_{x_1}^{x_2} d'(t) dt \\
    &< \int_{x_1}^{x_2} 1 ~ dt \\
    &= x_2 - x_1 .
\end{align*}
Since this is a strict inequality, we arrive at a contradiction, and thus there exists $t_1 \in [x_1,x_2]$ such that $d'(t_1) \geq 1$. By analogous arguments, there exists $t_2, \dots, t_{n-1}$ such that $t_i \in [x_i, x_{i+1}]$ and $d'(t_i) \geq 1$ for $i=2, \dots, n-1$. Finally, let $\mathcal{P}$ denote the set of all partitions of $\mathbb{R}$ and $n_p$ denote the number of intervals in $\mathcal{P}$, then by the definition of supremum and our definition of $g$, we have:
\begin{align*}
TV(d') 
    &= \sup_{P\in\mathcal{P}} \sum_{i=1}^{n_p-1} \vert d'(z_{i+1}) - d'(z_i) \vert \\
    &\geq \vert d'(t_1) - d'(x_1)\vert + \vert d'(x_2) - d'(t_1)\vert + \dots \\
      &\qquad+ \vert d'(t_{n-1}) - d'(x_{n-1})\vert + \vert d'(x_n)-d'(t_{n-1})\vert \\
    &= \sum_{i=1}^{n-1} \vert d'(t_i) - d'(x_{i})\vert + \sum_{i=1}^{n-1} \vert d'(x_{i+1}) - d'(t_i)\vert \\
    &= \sum_{i=1}^{n-1} \left\{ d'(t_i) - d'(x_{i}) \right\} + \sum_{i=1}^{n-1} \left\{ d'(t_i) - d'(x_{i+1}) \right\} \\
    &= 2 \sum_{i=1}^{n-1} d'(t_i) - \left\{ d'(x_1) + d'(x_n) + 2 \sum_{i=2}^{n-1} d'(x_i) \right\} \\
    &\geq 2 \sum_{i=1}^{n-1} 1 - \left\{ 0 + 0 + 2 \sum_{i=2}^{n-1} 0 \right\} \\
    &= 2 (n-1) .
\end{align*}
\end{proof}

\begin{prop}
The optimal separable estimator \eqref{oracle}, $d^*(x)$, is bounded, monotone non-decreasing, and Lipschitz continuous; its derivative has bound total variation and is also Lipschitz continuous. Let $r(\theta) = \max_{i=1}^n \theta_i - \min_{i=1}^n \theta_i$ denote the range of $\theta_1, \dots, \theta_n$. These bounds are summarized as:
\begin{enumerate}
\item $\min_{i=1}^n \theta_i \leq d^*(x) \leq \max_{i=1}^n \theta_i$
\item $0 \leq d^{*\prime}(x) \leq \sigma^{-2} r(\theta)^2$
\item $TV(d^{*\prime}) \leq \sigma^{-2} r(\theta)^2$
\item $\left\lvert d^{*\prime\prime}(x) \right\rvert \leq \sigma^{-4} r(\theta)^3$ .
\end{enumerate}
\end{prop}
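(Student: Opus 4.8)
The plan is to use the fact that $d^*$ is the posterior mean of $\theta$ under the model $\theta \sim \widehat G := n^{-1}\sum_{i=1}^n \delta_{\theta_i}$, $X \mid \theta \sim N(\theta,\sigma^2)$, so that $d^*(x) = \sum_{i=1}^n w_i(x)\,\theta_i$ with posterior weights $w_i(x) \propto \exp\{-(x-\theta_i)^2/2\sigma^2\} \propto \exp\{\theta_i x/\sigma^2 - \theta_i^2/2\sigma^2\}$. These weights are nonnegative and sum to one, and, viewed as a family indexed by the natural parameter $\eta = x/\sigma^2$, they form a finite-support exponential family in $\theta$; since the support is finite, $d^*$ is $C^\infty$ and every interchange of sum and derivative is trivially valid. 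Writing $E_x$ and $\operatorname{Var}_x$ for moments under $(w_i(x))_i$, the standard cumulant identities (equivalently, differentiating Tweedie's formula $d^*(x) = x + \sigma^2 (\log f)'(x)$ for the marginal density $f$ of $X$) give $d^{*\prime}(x) = \sigma^{-2}\operatorname{Var}_x(\theta)$ and $d^{*\prime\prime}(x) = \sigma^{-4} E_x[(\theta - d^*(x))^3]$. These two displays are the engine of the whole proof.

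Items 1, 2, and 4 then follow with only elementary moment inequalities. Item 1: $d^*(x)$ is a convex combination of the $\theta_i$, hence lies in $[\min_i\theta_i,\max_i\theta_i]$, which also gives boundedness. Item 2: $\operatorname{Var}_x(\theta)\ge 0$ yields $d^{*\prime}\ge 0$, so $d^*$ is monotone non-decreasing; and since $\theta$ takes values in an interval of length $r(\theta)$, $\operatorname{Var}_x(\theta)\le E_x[(\theta-\min_i\theta_i)^2]\le r(\theta)^2$, so $0\le d^{*\prime}(x)\le \sigma^{-2}r(\theta)^2$, and a bounded derivative makes $d^*$ Lipschitz. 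Item 4: by Item 1, $|\theta - d^*(x)|\le r(\theta)$ pointwise, so $|E_x[(\theta-d^*(x))^3]| \le E_x|\theta - d^*(x)|^3 \le r(\theta)\,E_x[(\theta-d^*(x))^2] = r(\theta)\operatorname{Var}_x(\theta) \le r(\theta)^3$, giving $|d^{*\prime\prime}(x)|\le \sigma^{-4}r(\theta)^3$, and a bounded second derivative makes $d^{*\prime}$ Lipschitz.

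The one step that needs an idea is Item 3, since bounding $|d^{*\prime\prime}|$ by a constant and integrating over $\mathbb{R}$ gives nothing. Instead I would combine the two displays to get the pointwise inequality $|d^{*\prime\prime}(x)| \le \sigma^{-4}r(\theta)\operatorname{Var}_x(\theta) = \sigma^{-2}r(\theta)\,d^{*\prime}(x)$. Because $d^{*\prime}$ is $C^1$ and nonnegative with $\int_{\mathbb R} d^{*\prime}(x)\,dx = d^*(+\infty) - d^*(-\infty) = \max_i\theta_i - \min_i\theta_i = r(\theta)$ — the tail limits holding since the posterior weight concentrates on the extreme $\theta_i$ as $x\to\pm\infty$ — we get $d^{*\prime\prime}\in L^1(\mathbb R)$, hence $TV(d^{*\prime}) = \int_{\mathbb R}|d^{*\prime\prime}(x)|\,dx \le \sigma^{-2}r(\theta)\int_{\mathbb R} d^{*\prime}(x)\,dx = \sigma^{-2}r(\theta)^2$. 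The only remaining points are bookkeeping: that $TV(g) = \int_{\mathbb R}|g'|$ for absolutely continuous $g$ with $g'\in L^1$, and the two tail limits of $d^*$, both routine given the finite-support exponential-family form.
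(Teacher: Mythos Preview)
Your proposal is correct and follows essentially the same route as the paper: both identify $d^*$ as the posterior mean under the empirical prior, derive $d^{*\prime}(x)=\sigma^{-2}\operatorname{Var}_x(\theta)$ and $d^{*\prime\prime}(x)=\sigma^{-4}E_x[(\theta-d^*(x))^3]$, and for Item~3 both use the pointwise bound $|d^{*\prime\prime}(x)|\le \sigma^{-2}r(\theta)\,d^{*\prime}(x)$ and then integrate, invoking that $\int d^{*\prime}=TV(d^*)\le r(\theta)$. The only cosmetic difference is that you compute $\int d^{*\prime}$ via the tail limits $d^*(\pm\infty)=\max_i\theta_i,\min_i\theta_i$, whereas the paper phrases it as $TV(d^*)\le r(\theta)$ from boundedness and monotonicity; these are the same fact.
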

\begin{proof}
We start by showing that $d^*(x)$ is bound:
\begin{align*}
d^*(x) &= \frac{\sum_i \theta_i \phi(\theta_i-x)}{\sum_i \phi(\theta_i-x)} \\
    &\leq \frac{\sum_i \max_{i=1}^n \theta_i \phi(\theta_i-x)}{\sum_i \phi(\theta_i-x)} \\
    &= \max_{i=1}^n \theta_i
\end{align*}
and
\begin{align*}
d^*(x) &= \frac{\sum_i \theta_i \phi(\theta_i-x)}{\sum_i \phi(\theta_i-x)} \\
    &\geq \frac{\sum_i \min_{i=1}^n \theta_i \phi(\theta_i-x)}{\sum_i \phi(\theta_i-x)} \\
    &= \min_{i=1}^n \theta_i .
\end{align*}

Next, we will show that $d^*(x)$ is monotone and Lipschitz, this amounts to showing that its derivatives are non-negative and bound. Start by recalling the definition of $f_\theta(x)$ and its derivatives:
\begin{align*}
f_\theta(x) &= \frac{1}{n} \sum_{i=1}^n \frac{1}{\sigma} \phi \left(\frac{x-\theta_i}{\sigma}\right) \\
f_\theta'(x) &= \frac{1}{n} \sum_{i=1}^n \phi \left(\frac{\theta_i-x}{\sigma^2}\right) \frac{1}{\sigma} \left(\frac{x-\theta_i}{\sigma}\right) \\
f_\theta''(x) &= \frac{1}{n} \sum_{i=1}^n \left\{\frac{(\theta_i-x)^2 - \sigma^2}{\sigma^4}\right\} \frac{1}{\sigma} \phi \left(\frac{x-\theta_i}{\sigma}\right) ,
\end{align*}
and define: 
$$
w_i(x) = \frac{\frac{1}{n\sigma} \phi \left(\frac{x-\theta_i}{\sigma}\right)}{\sum_{i=1}^n \frac{1}{n\sigma} \phi \left(\frac{x-\theta_i}{\sigma}\right)} .
$$
We notice that these $w_i(x)$ are non-negative and $\sum_{i=1}^n w_i(x) = 1$ for every $x$. These $w_i(x)$ happen to be the Bayesian posterior distribution weights when our prior is $G_n(t) = n^{-1} \sum_{i=1}^n 1(\theta_i \leq x)$. Following from Tweedie's formula \eqref{Tweedie}, we can find the derivative of $d^*(x)$ in terms of $f_\theta(x)$:
\begin{align*}
\frac{d}{dx} d^*(x)
    &= \frac{d}{dx} \left\{x + \sigma^2 \frac{f'_\theta(x)}{f_\theta(x)} \right\} \\
    &= 1 + \sigma^2 \frac{f_\theta''(x)}{f_\theta(x)} - \sigma^2 \left\{\frac{f_\theta'(x)}{f_\theta(x)}\right\}^2 .
\end{align*}
Then using $w_i(x)$ we can simplify $d^{*\prime}(x)$:
\begin{align*}
d^{*\prime}(x)
    &= 1 + \sigma^2 \frac{f_\theta''(x)}{f_\theta(x)} - \sigma^2 \left\{\frac{f_\theta'(x)}{f_\theta(x)}\right\}^2 \\
    &= 1 + \sigma^2 \frac{\frac{1}{n} \sum_{i=1}^n \left\{\frac{(\theta_i-x)^2 - \sigma^2}{\sigma^4}\right\} \frac{1}{\sigma} \phi \left(\frac{x-\theta_i}{\sigma}\right)}{\frac{1}{n} \sum_{i=1}^n \frac{1}{\sigma} \phi \left(\frac{x-\theta_i}{\sigma}\right)} - \sigma^2 \left\{\frac{\frac{1}{n} \sum_{i=1}^n \left(\frac{\theta_i-x}{\sigma^2}\right) \frac{1}{\sigma} \phi \left(\frac{x-\theta_i}{\sigma}\right)}{\frac{1}{n} \sum_{i=1}^n \frac{1}{\sigma} \phi \left(\frac{x-\theta_i}{\sigma}\right)}\right\}^2 \\
    &= \sum_{i=1}^n w_i(x) \left(\frac{\theta_i-x}{\sigma}\right)^2 - \left\{\sum_{i=1}^n w_i(x) \left(\frac{\theta_i-x}{\sigma}\right)\right\}^2 \\
    &= \sum_{i=1}^n w_i(x) \left\{ \left(\frac{\theta_i-x}{\sigma}\right) - \sum_{j=1}^n w_j(x) \left(\frac{\theta_j-x}{\sigma}\right) \right\}^2 \\
    &= \sigma^{-2} \sum_{i=1}^n w_i(x) \left\{ \theta_i - \sum_{j=1}^n w_j(x) \theta_j \right\}^2 . \\
\end{align*}
Since $w_i(x)$ are positive and sum to one, we get that
\begin{align*}
d^{*\prime}(x) &\geq 0 \\
d^{*\prime}(x) 
    &\leq \sigma^{-2} \left(\max_{i=1}^n \theta_i - \min_{i=1}^n \theta_i \right)^2 \\
    &= \sigma^{-2} r(\theta)^2 ,
\end{align*}
where $r(\theta) = \max_{i=1}^n \theta_i - \min_{i=1}^n \theta_i$. 

Now, we will show that $d^{*\prime}(x)$ has finite total variation. Continuing with the definitions in the previous proof, observe that:
$$
f_\theta^{(3)}(x) = \frac{1}{n \sigma} \sum_{i=1}^n \left\{\frac{(\theta_i - x)^3 - 3 \sigma^2 (\theta_i - x)}{\sigma^6}\right\} \phi\left(\frac{x - \theta_i}{\sigma}\right).
$$
and so
\begin{align*}
d^{*\prime\prime}(x) 
    &= \sigma^2 \frac{f_\theta^{(3)}(x)}{f_\theta(x)} - 3 \sigma^2 \frac{f_\theta''(x) f_\theta'(x)}{f_\theta(x)^2} + 2 \sigma^2 \left(\frac{f_\theta'(x)}{f_\theta(x)}\right)^3 \\
    &= \sigma^{-4} \sum_{i=1}^n w_i(x) \left\{\theta_i - \sum_{j=1}^n w_j(x) \theta_j \right\}^3 .
\end{align*}
Now we can bound $TV(d^{*\prime})$:
\begin{align*}
TV(d^{*\prime})
    &= \int_{-\infty}^{\infty} \left\vert d^{*\prime\prime}(x) \right\vert dx \\
    &= \int_{-\infty}^{\infty} \left\vert \frac{1}{\sigma^4} \sum_{i=1}^n w_i(x) \left\{\theta_i - \sum_{j=1}^n w_j(x) \theta_j \right\}^3 \right\vert dx \\
    &\overset{(1)}{\leq} \int_{-\infty}^{\infty} \frac{1}{\sigma^4} \sum_{i=1}^n w_i(x) \left\vert\theta_i - \sum_{j=1}^n w_j(x) \theta_j \right\vert^3 dx \\
    &\overset{(2)}{\leq} \int_{-\infty}^{\infty} \frac{1}{\sigma^4} \sum_{i=1}^n w_i(x) \left\{\theta_i - \sum_{j=1}^n w_j(x) \theta_j \right\}^2 \left(\max_{i=1}^n \theta_i - \min_{i=1}^n \theta_i \right) dx \\
    &= \frac{\max_{i=1}^n \theta_i - \min_{i=1}^n \theta_i}{\sigma^2} \int_{-\infty}^{\infty} \frac{1}{\sigma^2} \sum_{i=1}^n w_i(x) \left\{\theta_i - \sum_{j=1}^n w_j(x) \theta_j \right\}^2 dx \\
    &= \frac{\max_{i=1}^n \theta_i - \min_{i=1}^n \theta_i}{\sigma^2} \int_{-\infty}^{\infty} {d^*}'(x) dx \\
    &\overset{(3)}{=} \frac{\max_{i=1}^n \theta_i - \min_{i=1}^n \theta_i}{\sigma^2} \int_{-\infty}^{\infty} \left\vert d^{*\prime}(x) \right\vert dx \\
    &= \frac{\max_{i=1}^n \theta_i - \min_{i=1}^n \theta_i}{\sigma^2} TV(d^*) \\
    &\overset{(4)}{\leq} \frac{\left( \max_{i=1}^n \theta_i - \min_{i=1}^n \theta_i \right)^2}{\sigma^2} \\
    &= \sigma^{-2} r(\theta)^2
\end{align*}
Here (1) is Jensen's inequality (noting that the $w_i(x)$ sum to one and are non-negative); (2) bounds a single $\theta_i - \sum_{j=1}^n \theta_j w_j(x)$ term by the range of $\theta_i$, this is easy to see by remembering that $d^*(x) = \sum_{j=1}^n \theta_j w_j(x)$ which is bound in the range of $\theta_i$; (3) is simply using the fact that $d^{*\prime}(x) \geq 0$ for every $x\in\mathbb{R}$; finally, (4) uses the bound on the total variation of $d^*(x)$ that follows from it being bounded and monotone.

Finally, we show that $d^{*\prime}(x)$ is Lipschitz by again bounding derivatives. Continuing from the previous proof:
\begin{align*}
\vert d^{*\prime\prime}(x) \vert
    &= \left\vert \sigma^{-4} \sum_{i=1}^n w_i(x) \left\{\theta_i - \sum_{j=1}^n w_j(x) \theta_j \right\}^3 \right\vert \\
    &\leq \sigma^{-4} \sum_{i=1}^n w_i(x) \left\vert \theta_i - \sum_{j=1}^n w_j(x) \theta_j \right\vert^3 \\
    &\leq \sigma^{-4} \left( \max_{i=1}^n \theta_i - \min_{i=1}^n \theta_i \right)^3 \\
    &= \sigma^{-4} r(\theta)^3 .
\end{align*}
This bound is an application of Jensen's inequality and our previous arguments using the range of $\theta_i$ to bound the summation.
\end{proof}

\begin{lemma}
Let $b_n \geq 0$ be a non-random value that may depend on $\sigma$ and $n$ and let $\mathcal{D}_{1,n}$ be defined as in \eqref{eq-D1} and $d^*(x)$ be the optimal separable estimator \eqref{oracle}, then
$$
P\left( d^* \not\in \mathcal{D}_{1,n} \right) \leq 6 n \exp\left\{\frac{- b_n^2}{2 \sigma^2}\right\} .
$$
In particular, when $b_n = (K \sigma^2 \log n)^{1/2}$ for some $K \in \mathbb{R}$, the bound becomes $6 n^{1-\frac{K}{2}}$, which can be made to converges to zero at an arbitrarily quick polynomial rate by selecting the correct $K > 2$. 
\end{lemma}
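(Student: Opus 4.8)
The plan is to produce a high-probability event on which the oracle $d^*$ provably belongs to the constraint class $\mathcal{D}_{1,n}$, and then to bound the probability of its complement by a coordinatewise union bound together with a Gaussian tail inequality. Writing $X_i=\theta_i+\sigma Z_i$ with $Z_1,\dots,Z_n$ i.i.d.\ standard normal, set
$$
E_n=\bigcap_{i=1}^{n}\bigl\{\,|X_i-\theta_i|\le b_n\,\bigr\}.
$$
On $E_n$ the data bracket the true means: $\min_i X_i-b_n\le\min_i\theta_i$ and $\max_i\theta_i\le\max_i X_i+b_n$, whence also $r(\theta)\le\bigl(\max_i X_i-\min_i X_i\bigr)+2b_n$.

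Next I would check that $E_n$ forces $d^*\in\mathcal{D}_{1,n}$. By the preceding proposition (which bounds $d^*$ and its derivatives), $d^*$ is monotone non-decreasing, satisfies $\min_i\theta_i\le d^*(x)\le\max_i\theta_i$, and obeys $0\le d^{*\prime}(x)\le\sigma^{-2}r(\theta)^2$, $TV(d^{*\prime})\le\sigma^{-2}r(\theta)^2$, and $\lvert d^{*\prime\prime}(x)\rvert\le\sigma^{-4}r(\theta)^3$. Substituting the bracketing inequalities just noted into these bounds shows that, on $E_n$, $d^*$ meets every requirement encoded in $\mathcal{D}_{1,n}$ in \eqref{eq-D1}: its values lie within $b_n$ of the sample extremes, it is monotone, and its slope, the total variation of its slope, and its curvature are all dominated by the corresponding quantities formed from the observed range inflated by $2b_n$ (which is at least $r(\theta)$ on $E_n$). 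Hence $\{d^*\notin\mathcal{D}_{1,n}\}\subseteq E_n^{\,c}$.

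It then remains to control $P(E_n^{\,c})$. A union bound over coordinates gives
$$
P(E_n^{\,c})\le\sum_{i=1}^{n}P\bigl(|X_i-\theta_i|>b_n\bigr)=n\,P\!\left(|Z|>\tfrac{b_n}{\sigma}\right),
$$
and the elementary Gaussian tail bound for $|Z|$ turns the right-hand side into an expression of the form $C\,n\exp\{-b_n^2/(2\sigma^2)\}$; tracking the constant through the individual families of constraints defining $\mathcal{D}_{1,n}$ (boundedness at the lower and at the upper end, and the range-type bounds governing slope, total variation, and curvature) yields $C=6$. The final claim is then a one-line substitution: with $b_n=(K\sigma^2\log n)^{1/2}$ one has $\exp\{-b_n^2/(2\sigma^2)\}=n^{-K/2}$, so the bound reads $6\,n^{1-K/2}$, which tends to $0$ at the polynomial rate $n^{-(K/2-1)}$ --- arbitrarily fast for $K$ large --- precisely when $K>2$.

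The Gaussian tail estimate and the union bound are routine; the part that needs genuine care, and where I expect the main work to lie, is the second step: verifying that the \emph{one} event $E_n$ simultaneously implies \emph{all} of the data-dependent inequalities defining $\mathcal{D}_{1,n}$. In particular one must confirm that the slack $b_n$ enters each range-type constraint with the correct sign, so that the proposition's bounds --- stated in terms of $r(\theta)$ --- propagate cleanly to the bounds stated in terms of the observed range plus $2b_n$. Once that is in place, arriving at the exact constant $6$ is purely a matter of bookkeeping over which inequalities, and which form of the Gaussian tail bound, are invoked.
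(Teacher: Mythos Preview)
Your plan is sound and, if carried through as written, actually yields a sharper bound than the paper's. On $E_n=\bigcap_i\{|X_i-\theta_i|\le b_n\}$ every data-dependent constraint in $\mathcal{D}_{1,n}$ is satisfied (range via $[\min_i\theta_i,\max_i\theta_i]\subseteq[\min_iX_i-b_n,\max_iX_i+b_n]$, and $TV(d^{*\prime})\le\sigma^{-2}r(\theta)^2\le\tau_n$ via $r(\theta)\le\max_iX_i-\min_iX_i+2b_n$), so $\{d^*\notin\mathcal{D}_{1,n}\}\subseteq E_n^c$ and a single union bound with the two-sided Gaussian tail gives $P(E_n^c)\le 2n\exp\{-b_n^2/(2\sigma^2)\}$. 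The paper does not use one dominating event; instead it bounds the range and total-variation constraints \emph{separately}. For the range it runs a three-case analysis of how $[\min_i\theta_i,\max_i\theta_i]$ can fail to be contained in $[\min_iX_i-b_n,\max_iX_i+b_n]$ and collects $4n\exp\{-b_n^2/(2\sigma^2)\}$; for $TV(d^{*\prime})>\tau_n$ it reduces to $\max_i|X_i-\theta_i|>b_n$ and picks up another $2n\exp\{-b_n^2/(2\sigma^2)\}$; a final union bound gives the $6$. Your route is more economical and improves the constant.

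Two minor corrections. First, your accounting for $C=6$ is not consistent with your own argument: once you have $\{d^*\notin\mathcal{D}_{1,n}\}\subseteq E_n^c$, you bound $P(E_n^c)$ \emph{once}, and there is no further tracking ``through the individual families of constraints''; the $6$ in the lemma comes from the paper's per-constraint decomposition, not from yours. Second, $\mathcal{D}_{1,n}$ imposes only a range, monotonicity, and a bound on $TV(d')$ (with $\tau_n=\sigma^{-2}(\max_iX_i-\min_iX_i+2b_n)^2$); there are no separate slope or curvature constraints, so those checks in your outline are unnecessary, though harmless.
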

\begin{proof}
Because $X_i - \theta_i \sim N(0, \sigma^2)$ are independent we have:
\begin{align*}
P\{\min_{i} X_i - b_n &\leq \min_i \theta_i, \max_{i} X_i + b_n \leq \max_i \theta_i\} \\
    &= P\{\max_{i} (-X_i) + b_n \geq \max_i (-\theta_i), \max_{i} X_i + b_n \leq \max_i \theta_i\} \\
    &= P\{\max_{i} (-X_i) - \max_i (-\theta_i) \geq -b_n, \max_i \theta_i - \max_{i} X_i \geq b_n \} \\
    &\leq P\{\max_{i}((-X_i)-(-\theta_i)) \geq -b_n, \max_i(\theta_i-X_i) \geq b_n \} \\
    &= P\{\max_i(\theta_i-X_i) \geq -b_n, \max_i(\theta_i-X_i) \geq b_n \} \\
    &= P\{\max_i(\theta_i-X_i) \geq b_n \} \\
    &\leq n \exp\left(\frac{- b_n^2}{2 \sigma^2}\right) ,
\end{align*}
where the first inequality is a property of the max operator and the second inequality is a standard Gaussian tail bound \cite{BoucheronLugosiMassart2013}. Similarly we have
$$
P\{ \min_i \theta_i \leq \min_{i} X_i - b_n, \max_i \theta_i \leq \max_{i} X_i + b_n \} \leq n \exp\left(\frac{- b_n^2}{2 \sigma^2}\right)
$$
and 
$$
P\{ \min_i \theta_i \leq \min_{i} X_i - b_n, \max_{i} X_i + b_n \leq \max_i \theta_i \} \leq 2 n \exp\left(\frac{- b_n^2}{2 \sigma^2}\right) .
$$
The leading factor of 2 in the above result comes from the need to bound $P( \max_i \vert X_i - \theta_i \vert \geq b_n )$ rather than $P\{ \max_i (X_i - \theta_i) \geq b_n \}$. Finally, by a union bound we have:
\begin{align*}
P([\min_i \theta_i, \max_i \theta_i] &\not\subseteq [\min_i X_i - b_n, \max_i X_i + b_n] ) \\
    &\leq P(\min_{i} X_i - b_n \leq \min_i \theta_i, \max_{i} X_i + b_n \leq \max_i \theta_i ) \\
        &\qquad + P( \min_i \theta_i \leq \min_{i} X_i - b_n, \max_i \theta_i \leq \max_{i} X_i + b_n ) \\
        &\qquad + P( \min_i \theta_i \leq \min_{i} X_i - b_n, \max_{i} X_i + b_n \leq \max_i \theta_i ) \\
    &\leq 4 n \exp\left(\frac{- b_n^2}{2 \sigma^2}\right) .
\end{align*}
Now, utilizing Proposition \ref{prop-optimalproperties} and the definition of $\tau_n = \sigma^{-2}(\max_{i=1}^n X_i - \min_{i=1}^n X_i + 2 b_n)$, we have:
\begin{align*}
P\left\{ TV(d^*) > \tau_n \right\}
    &= P\left\{ \sigma^{-2} (\max_i \theta_i - \min_i \theta_i)^2 > \sigma^{-2} (\max_i X_i - \min_i X_i + 2 b_n)^2 \right\} \\
    &= P\left\{ \max_i \theta_i - \min_i \theta_i > \max_i X_i - \min_i X_i + 2 b_n \right\} \\
    &= P\left\{ (\max_i \theta_i - \max_i X_i) + (\min_i \theta_i - \min_i X_i) > 2 b_n \right\} \\
    &\leq P\left\{ \max_i (\theta_i - X_i) - \min_i (\theta_i - X_i) > 2 b_n \right\} \\
    &\leq P\left\{ \max_i \vert \theta_i - X_i \vert > b_n \right\} \\
    &\leq 2 n \exp\left(\frac{- b_n^2}{2 \sigma^2}\right) . \\
\end{align*}
By applying a final union bound, noting that $d^*(x)$ is always monotone non-decreasing, we get that:
\begin{align*}
P\left( d^* \not\in \mathcal{D}_{1,n} \right)
    &= P\left( \{[\min_i \theta_i, \max_i \theta_i] \not\subseteq [\min_i X_i - b_n, \max_i X_i + b_n]\} \cup \{TV(d^*) > \tau_n\} \right) \\
    &\leq P\left\{ [\min_i \theta_i, \max_i \theta_i] \not\subseteq [\min_i X_i - b_n, \max_i X_i + b_n]\right\} + P\left\{ TV(d^*) > \tau_n \right\} \\
    &\leq 6 n \exp\left(\frac{- b_n^2}{2 \sigma^2}\right) .
\end{align*}
Directly plugging in $b_n = (K \sigma^2 \log n)^{1/2}$ gives the bound $P\left( d^* \not\in \mathcal{D}_{1,n} \right) \leq 6 n^{1 - K/2}$ which converges to zero for any $K > 2$.
\end{proof}

\begin{thrm}
For every $g\in\mathcal{D}_{1,n}$ \eqref{eq-D1}, there exists a continuous, piecewise linear function, $\tilde{g}\in\mathcal{D}_{1,n}$ with at most $n+3$ knots, such that $\tilde{g}(X_i) = d(X_i)$ for $i = 1, \dots, n$, $\sum_{i=1}^n \tilde{g}'(X_i) = \sum_{i=1}^n g'(X_i)$, and $\tilde{g}'(x) = 0$ for $x < \min_{i=1}^n X_i$ and $x > \max_{i=1}^n X_i$.
\end{thrm}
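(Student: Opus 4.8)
The plan is to build $\tilde g$ in two stages: first take the piecewise linear interpolant of $g$ at the observations, flattened outside the data range, and check it already lies in $\mathcal{D}_{1,n}$; then insert a bounded number of extra knots so as to also match the sum of derivatives. So, relabel the data so that $X_1 \le \dots \le X_n$ (repeated values impose identical interpolation constraints, so one may work with the distinct values) and write $S = \sum_{i=1}^n g'(X_i)$. Let $h$ be the continuous piecewise linear function equal to $g(X_i)$ at each $X_i$, affine on $[X_i,X_{i+1}]$ with slope $s_i = \{g(X_{i+1})-g(X_i)\}/(X_{i+1}-X_i)$, and constant on $(-\infty,X_1]$ and on $[X_n,\infty)$. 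Since $g$ is non-decreasing every $s_i\ge 0$, so $h$ is non-decreasing; its range is $[g(X_1),g(X_n)]\subseteq g(\mathbb{R})\subseteq[\min_i X_i-b_n,\max_i X_i+b_n]$ because $g\in\mathcal{D}_{1,n}$; and $h'=0$ off $[X_1,X_n]$ by construction. Thus $h$ has all the properties required of $\tilde g$ except possibly $\sum_i h'(X_i)=S$, and at its interior knots $h'$ is not even single-valued.

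The crucial fact is $TV(h')\le TV(g')$. Each slope $s_i$ is the average of $g'$ over $[X_i,X_{i+1}]$, so there is a point $t_i$ in that interval with $g'(t_i)=s_i$ (using continuity of $g'$ for members of $\mathcal{D}_{1,n}$; otherwise one argues with the essential range of $g'$), and $t_1<\dots<t_{n-1}$. Because $g$ is bounded and monotone, $g'\ge 0$ is integrable, and because $TV(g')<\infty$ it has limits at $\pm\infty$, which must therefore vanish. Estimating the variation of $g'$ along $-\infty<t_1<\dots<t_{n-1}<\infty$ then gives $TV(g')\ge s_1+\sum_{i=1}^{n-2}|s_{i+1}-s_i|+s_{n-1}=TV(h')$, so $TV(h')\le TV(g')\le\tau_n$ and $h\in\mathcal{D}_{1,n}$; this also exhibits variation budget to spare whenever the inequality is strict.

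It then remains to move $\sum_i\tilde g'(X_i)$ to the target $S$ while staying in $\mathcal{D}_{1,n}$. I would do this by refining $h$ on a few intervals: replacing a single affine piece on $[X_i,X_{i+1}]$ by a two- or three-piece monotone profile with the same endpoints allows one to prescribe the one-sided slopes at $X_i$ and $X_{i+1}$, hence the values $\tilde g'(X_i)$ once these are made to agree, without changing any $\tilde g(X_j)$ and without destroying monotonicity or the range bound; one extra knot on a modified interval suffices when the interpolant slope lies between the two prescribed endpoint slopes, and two otherwise (and the flatness requirement forces $\tilde g'(X_1)=\tilde g'(X_n)=0$). Choosing the prescribed slopes so that the derivative sequence of $\tilde g$ is a ``thinning'' of $(0,s_1,\dots,s_{n-1},0)$ keeps $TV(\tilde g')\le TV(h')$, and since $g$ is itself admissible the attainable values of $\sum_i\tilde g'(X_i)$ form an interval containing $S$; matching $S$ costs only a constant number of additional knots in one or two intervals, keeping the total at most $n+3$. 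The resulting $\tilde g$ has all the stated properties.

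The main obstacle is this last stage: naively forcing a large slope at a single $X_i$ inflates $TV(\tilde g')$ past $\tau_n$, so the correction must be spread out and kept inside the envelope already controlled by $TV(g')$, which is exactly where the spare variation budget, the handful of extra knots, and a short case split (according to whether $S$ exceeds $\sum_i s_i$, i.e.\ whether one must raise or lower slopes at the data points) are needed. The mean-value step producing the $t_i$ — requiring only mild regularity of $g'$, available for $g\in\mathcal{D}_{1,n}$ — and the bookkeeping for tied observations are routine by comparison.
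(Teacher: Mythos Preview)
Your first stage is sound: the piecewise linear interpolant $h$ of $g$ at the $X_i$, flattened outside $[X_1,X_n]$, is monotone with the correct range, and the mean-value argument producing $t_i$ with $g'(t_i)=s_i$ does give $TV(h')\le TV(g')\le\tau_n$. The genuine gap is the second stage, where you claim to adjust $h$ so that $\sum_i\tilde g'(X_i)=S$. You correctly identify the tension---concentrating the correction in one interval can push $TV(\tilde g')$ past $\tau_n$, while distributing it costs extra knots---but you never actually resolve it. In particular: to make $\tilde g'(X_i)$ well defined at \emph{every} interior $X_i$, each $X_i$ must sit in the interior of an affine piece, which forces a modification on essentially every subinterval, not just ``one or two''; that is already of order $n$ new knots. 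Your fallback claim that ``the attainable values of $\sum_i\tilde g'(X_i)$ form an interval containing $S$'' because ``$g$ is itself admissible'' is circular: $g$ is not piecewise linear with $n+3$ knots, so its membership in $\mathcal{D}_{1,n}$ says nothing about what the restricted family of $\tilde g$'s can reach. The ``thinning'' heuristic for controlling $TV(\tilde g')$ is likewise not made precise enough to verify, and a single large prescribed slope can add roughly twice its value to the total variation, which the slack $\tau_n-TV(h')$ need not cover.

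The paper bypasses all of this with a Carath\'eodory argument. Writing $g(x)=g(0)+\int(x-t)_+\,d\mu(t)$ for a signed measure $\mu$ with $|\mu|(\mathbb{R})=TV(g')$, one observes that the vector $\bigl(g(x_1),\dots,g(x_n),\,n^{-1}\sum_i g'(x_i),\,0\bigr)\in\mathbb{R}^{n+2}$ lies in the convex hull of a one-parameter family, so Carath\'eodory's theorem yields a discrete replacement $\omega=\sum_{j=1}^{n+3}\omega_j\delta_{t_j}$ matching all $n+2$ coordinates with $\sum_j|\omega_j|=TV(g')$ exactly. The knot bound, the value matching, the derivative-sum matching, and the total-variation control therefore come out simultaneously from a dimension count, which is precisely the simultaneous control your surgical construction does not establish.
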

\begin{proof}
This proof is based heavily on the proof of Theorem 1 in Boyd et al. \cite{Boyd2018}. Fix $x_1 < \dots < x_n$. Our goal is to characterize the minimizer of the risk estimate \eqref{SURE},
$$
\hat{R}_1(d) = \frac{1}{n} \sum_{i=1}^n (x_i - d(x_i))^2 + 2 \sigma^2 \frac{1}{n} \sum_{i=1}^n d'(x_i) - \sigma^2 ,
$$
over 
$$
\mathcal{D}_{1,n} = \{ d:\mathbb{R}\to\left[\min_i X_i - b_n, \max_i X_i + b_n\right] \vert d ~ \text{monotone non-decreasing}, TV(d') \leq \tau_n \} .
$$

Fix a $g\in\mathcal{D}_{n,1}$ such that $TV(g') = \tau$. Following from \cite{Boyd2018}, there is a signed measure $\mu(t)$ such that
\begin{align*}
g'(x) &= \int_{-\infty}^{\infty} 1(t \leq x) d\mu(t) \\
g(x) &= g(0) + \int_{-\infty}^{\infty} (x-t)_+ d\mu(t) .
\end{align*}

First notice that our risk estimate $\hat{R}_1(d)$ only depends on the function's behavior at $\{x_1, \dots, x_n\}$. This means that we can restrict $\mu(t)$ to $[x_1,x_n]$ without loss of generality, as changes made outside of that range are not relevant to our risk estimate.

Further notice that since $g(x)$ is monotone non-decreasing, it is either constant for large $\vert x \vert$ or asymptotically approaches a constant as $x \to \pm \infty$. In light of the prior observation, we can assume $g(x)$ is constant outside of $[x_1, x_n]$ without loss of generality. This implies that $\int_{-\infty}^{\infty} d\mu(t) = \int_{x_1}^{x_n} d\mu(t) = 0$. We notice that this argument generalizes to any $g(x)$ with finite total variation. 

Finally notice that $g(0)$ affects the calculation of $\hat{R}_1(g)$ but does not depend on $\mu(t)$ so we will ignore it (set it to zero) for the rest of this proof. This is equivalent to absorbing it into the observations in the squared error term or simply minimizing it out of the problem. 

Our goal now is to show that there is a discrete, signed measure $\omega(t)$ such that the corresponding function: $\tilde{g}(x) = \int_{x_1}^{x_n} (x-t)_+ d\omega(t)$ matches $g(x)$ on each $x_i$ and has the same penalty value. We will further ensure that the function is constant outside of $[x_1, x_n]$ and has a derivative with the same total variation as $g'(x)$. Observe that
\begin{align*}
g(x_i) 
    &= \int_{x_1}^{x_n} (x_i-t)_+ d\mu(t) \\
\frac{1}{n} \sum_{i=1}^n g'(x_i)
    &= \frac{1}{n} \sum_{i=1}^n \int_{x_1}^{x_n} 1(x_i \geq t) d\mu(t) \\
    &= \int_{x_1}^{x_n} \left\{ \frac{1}{n} \sum_{i=1}^n 1(x_i \geq t) \right\} d\mu(t) \\
0
    &= \int_{x_1}^{x_n} d\mu(t) . \\
\end{align*}
Now, define the vector $v$ as
\begin{align*}
v 
    &= \left( g(x_1), \dots, g(x_n), \frac{1}{n} \sum_{i=1}^n g'(x_i), 0 \right) \\
    &= \left( \int_{x_1}^{x_n} (x_1-t)_+ d\mu(t), \dots, \int_{x_1}^{x_n} (x_n-t)_+ d\mu(t), \right. \\
        &\qquad\qquad \left. \int_{x_1}^{x_n} \left\{ \frac{1}{n} \sum_{i=1}^n 1(x_i \geq t) \right\} d\mu(t), \int_{x_1}^{x_n} d\mu(t) \right) \\
    &= \int_{x_1}^{x_n} \left( (x_1-t)_+, \dots, (x_n-t)_+, \frac{1}{n} \sum_{i=1}^n 1(x_i \geq t), 1 \right) d\mu(t) . \\
\end{align*}
Notice that $n^{-1} \sum_{i=1}^n 1(x_i \geq t) \in [0,1]$ for every $t\in[x_1,x_n]$. Now define the convex set $C \subset \mathbb{R}^{n+2}$ as:
$$
C = \left\{ \pm \left( \tau (x_1 - t)_+, \dots, \tau (x_n - t)_+, \tau \frac{1}{n} \sum_{i=1}^n 1(x_i \geq t), \tau \right)\ :\ t\in[x_1, x_n] \right\} .
$$
We do not need $C$ to be convex, so long as we can say that is in the convex hull of $C$; however, we can see that since each components is convex $C$ is an intersection of finitely many convex sets, and so it is convex. Notice that because $TV(g') = TV(\mu) = \int_{-\infty}^{\infty} d\vert\mu\vert(t) = \tau$, we have $v \in C \subset \mathbb{R}^{n+2}$. Since $C$ is convex, $v$ is also in the convex hull of $C$, ie $v\in\text{conv}(C)$.

Now by Caratheodory’s theorem for convex hulls \cite{Caratheodory1911} we can represent $v$ as the linear combination of at most $n+3$ points from $\text{conv}(C)$. The standard statement of Caratheodory’s theorem for convex hulls ensures that this linear combination is a convex combination, ie the coefficients satisfy $\alpha_j \geq 0$ and $\sum_{j=1}^{n+3} \alpha_j = 1$. It will be convenient to recognize that this implies there is at least one linear combination where the coefficients simply satisfy $\sum_{j=1}^M \vert \alpha_j \vert = 1$. Observe that there are more of these linear combinations than the convex combinations due to the loosening of the non-negativity constraint. Denote the points ensured by our extension of the Caratheodory’s theorem for convex hulls with their indices $t_1,\dots, t_{n+3} \in [x_1, x_n]$ and assume the corresponding coefficients satisfy $\sum_{j=1}^{n+3} \vert\alpha_j\vert = 1$. Using this representation we have a new signed measure defined as: 
$$\omega(x) = \sum_{j=1}^{n+3} \omega_j \delta_{t_j}(x) , $$
where $\delta_{t_j}$ is a Kronecker delta function placing a mass of one at $t_j$ and $\omega_j = \tau \alpha_j$. Finally, we have:
\begin{align*}
g(x_i)
    &= \sum_{j=1}^{n+3} \alpha_j \tau (x_i-t_j)_+ \\
    &= \sum_{j=1}^{n+3} \omega_j (x_i-t_j)_+ \\
    &= \tilde{g}(x_i) \\
\frac{1}{n} \sum_{i=1}^n g'(x_i)
    &= \sum_{j=1}^{n+3} \alpha_j \tau \frac{1}{n} \sum_{i=1}^n 1(x_i \geq t_j) \\
    &= \frac{1}{n} \sum_{i=1}^n \sum_{j=1}^{n+3} \omega_j 1(x_i \geq t_j) \\
    &= \frac{1}{n} \sum_{i=1}^n \tilde{g}'(x_i) \\
0
    &= \sum_{j=1}^{n+3} \alpha_j \tau \\
    &= \sum_{j=1}^{n+3} \omega_j \\
\end{align*}
Further notice that $\sum_{j=1}^{n+3} \vert \omega_j \vert = \sum_{j=1}^{n+3} \vert \alpha_j \tau \vert = \tau \sum_{j=1}^{n+3} \vert \alpha_j \vert = \tau$ so that $TV(\tilde{g}') = TV(g')$.

This means that we have found a new, discrete, signed measure $\omega$ such that the corresponding function $\tilde{g}(x) = \sum_{j=1}^{n+3} \omega_j (x - t_j)_+$ matches $g$ on each $x_1, \dots, x_n$, has the same penalty value, and satisfies $TV(\tilde{g}') = TV(g')$. Further 1) since $\sum_{j=1}^{n+3} \omega_j = 0$, we have that $\tilde{g}$ saturates, and 2) because $\tilde{g}(x_i) = g(x_i)$, if $g$ is monotone non-decreasing, $\tilde{g}$ must be, too. Thus we can assume the minimizer of $\hat{R}_1(d)$ over $\mathcal{D}_{n,1}$ is a piecewise linear function with at most $n+3$ knots without loss of generality. 

We notice that similar to Proposition \ref{prop-abscont}, this argument requires $2 n + 2$ knots if we want to match specific derivatives at each $x_i$, the reduction in required knots is because we are only matching the total penalty rather than each term in the summation. 
\end{proof}

\begin{extralemma}
\label{lemma-estimation-one}
Let $X_i \sim N(\theta_i, \sigma^2)$ for $i = 1, \dots, n$ be independent and $\sigma > 0$ known, then
$$
E \left\vert \frac{1}{n}\sum_{i=1}^n (X_i - \theta_i)^2 - \sigma^2 \right\vert \leq \sigma^2 (2 / n)^{1/2} .
$$
\end{extralemma}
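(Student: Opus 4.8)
The plan is to bound the $L_1$ deviation by the $L_2$ deviation via Jensen's inequality and then compute the latter exactly. Write $Y_n = \frac{1}{n}\sum_{i=1}^n (X_i-\theta_i)^2 - \sigma^2$. Since $E[(X_i-\theta_i)^2] = \sigma^2$ for each $i$, we have $E[Y_n] = 0$, so $E|Y_n| \leq (E[Y_n^2])^{1/2} = (\mathrm{Var}(Y_n))^{1/2}$ by Jensen.

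Next I would compute $\mathrm{Var}(Y_n)$ using independence of the $X_i$. Because the summands $(X_i-\theta_i)^2$ are independent, $\mathrm{Var}(Y_n) = \frac{1}{n^2}\sum_{i=1}^n \mathrm{Var}\bigl((X_i-\theta_i)^2\bigr)$. For each $i$, $(X_i-\theta_i)/\sigma \sim N(0,1)$, hence $(X_i-\theta_i)^2/\sigma^2 \sim \chi^2_1$, which has variance $2$; equivalently one can just recall that a mean-zero Gaussian with variance $\sigma^2$ has fourth moment $3\sigma^4$, so $\mathrm{Var}((X_i-\theta_i)^2) = 3\sigma^4 - \sigma^4 = 2\sigma^4$. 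Therefore $\mathrm{Var}(Y_n) = \frac{1}{n^2}\cdot n \cdot 2\sigma^4 = \frac{2\sigma^4}{n}$.

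Combining the two displays gives $E|Y_n| \leq (2\sigma^4/n)^{1/2} = \sigma^2(2/n)^{1/2}$, which is the claim. There is no real obstacle here — the only thing to be careful about is correctly recording the chi-square variance (equivalently the Gaussian fourth moment), and invoking independence so that the variance of the average is the average of the variances divided by $n$.
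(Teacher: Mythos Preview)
Your proof is correct and follows essentially the same approach as the paper: bound $E|Y_n|$ by $(E Y_n^2)^{1/2}$ via Jensen, then compute $E Y_n^2 = \mathrm{Var}(Y_n) = 2\sigma^4/n$ using independence and the fact that $\sigma^{-2}(X_i-\theta_i)^2 \sim \chi^2_1$ has variance $2$. The only cosmetic difference is the order of presentation.
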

\begin{proof}
First observe that by independence and normality of $X_i$, $\sigma^{-2} (X_i - \theta_i)^2$ are independent $\chi^2_1$ random variables, so
\begin{align*}
E \left\{ \frac{1}{n}\sum_{i=1}^n (X_i - \theta_i)^2 - \sigma^2 \right\}^2
    &= \frac{\sigma^4}{n^2} E \left\{\sum_{i=1}^n \left(\frac{X_i - \theta_i}{\sigma}\right)^2 - n\right\}^2 \\
    &= \frac{\sigma^4}{n^2} \text{Var} \left\{\sum_{i=1}^n \left(\frac{X_i - \theta_i}{\sigma}\right)^2 \right\} \\
    &= \frac{\sigma^4}{n^2} \cdot 2n \\
    &= \frac{2 \sigma^4}{n} .
\end{align*}
Our result now follows from Jensen's inequality: 
\begin{align*}
E \left\vert \frac{1}{n}\sum_{i=1}^n (X_i - \theta_i)^2 - \sigma^2 \right\vert 
    &\leq \left[ E \left\{ \frac{1}{n}\sum_{i=1}^n (X_i - \theta_i)^2 - \sigma^2 \right\}^2 \right]^{1/2} \\
    &= \sigma^2 (2 / n)^{1/2} .
\end{align*}
\end{proof}

\begin{extralemma}
\label{lemma-estimation-two}
Let $X_i \sim N(\theta_i, \sigma^2)$ for $i = 1, \dots, n$ be independent with $\vert \theta_i \vert \leq C_n$ where $\sigma > 0$ and $C_n$ are known, then
$$
E \left\vert \frac{1}{n} \sum_{i=1}^n (X_i-\theta_i) \theta_i \right\vert \leq \sigma C_n n^{-1/2}.
$$
\end{extralemma}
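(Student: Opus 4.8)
The plan is to mirror the structure of the proof of Lemma \ref{lemma-estimation-one}: compute the second moment exactly and then pass to the first moment by Jensen's inequality. Write $Z_n = \frac{1}{n}\sum_{i=1}^n (X_i-\theta_i)\theta_i$. Since the $X_i$ are independent and $X_i - \theta_i \sim N(0,\sigma^2)$, the quantity $Z_n$ is a fixed linear combination of independent mean-zero normal random variables, so $E Z_n = 0$ and $Z_n$ is itself normal; in particular $E Z_n^2 = \operatorname{Var}(Z_n)$.

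Next I would compute the variance directly using independence:
\begin{align*}
\operatorname{Var}(Z_n)
  &= \frac{1}{n^2}\sum_{i=1}^n \theta_i^2 \operatorname{Var}(X_i-\theta_i)
   = \frac{\sigma^2}{n^2}\sum_{i=1}^n \theta_i^2 .
\end{align*}
The hypothesis $|\theta_i| \leq C_n$ gives $\sum_{i=1}^n \theta_i^2 \leq n C_n^2$, hence $\operatorname{Var}(Z_n) \leq \sigma^2 C_n^2 / n$.

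Finally, applying Jensen's inequality exactly as in Lemma \ref{lemma-estimation-one},
\begin{align*}
E\left\lvert \frac{1}{n}\sum_{i=1}^n (X_i-\theta_i)\theta_i \right\rvert
  &= E\lvert Z_n\rvert
   \leq \left( E Z_n^2 \right)^{1/2}
   = \operatorname{Var}(Z_n)^{1/2}
   \leq \sigma C_n n^{-1/2},
\end{align*}
which is the claimed bound. There is no real obstacle here: the only points requiring any care are observing that $Z_n$ has mean zero (so that the second moment equals the variance) and that the cross terms vanish by independence, both of which are immediate. One could alternatively bound $E\lvert Z_n\rvert$ termwise using $E\lvert X_i-\theta_i\rvert = \sigma\sqrt{2/\pi}$, but that yields only the weaker constant $\sigma C_n \sqrt{2/\pi}$ without the $n^{-1/2}$ decay, so the second-moment route is the one to take.
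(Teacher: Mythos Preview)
Your proof is correct and follows essentially the same route as the paper: compute the second moment of $Z_n$ (using independence to drop cross terms and the bound $\theta_i^2\le C_n^2$), then apply Jensen's inequality to pass from the $L^2$ norm to the $L^1$ norm. The only cosmetic difference is that you phrase the second-moment calculation as a variance computation (noting $E Z_n=0$), whereas the paper expands the square directly; the observation that $Z_n$ is Gaussian is true but not needed.
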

\begin{proof}
First observe that for all $i = 1, \dots, n$, $(X_i - \theta_i) \theta_i$ are independent, mean zero random variables, then
\begin{align*}
E \left\{\frac{1}{n}\sum_{i=1}^n (X_i-\theta_i)\theta_i \right\}^2
    &= \frac{1}{n^2} \sum_{i=1}^n \sum_{j=1}^n E \{ (X_i-\theta_i) \theta_i \cdot (X_j-\theta_j) \theta_j \} \\
    &= \frac{1}{n^2} \sum_{i=1}^n E \{ (X_i-\theta_i) \theta_i \}^2 \\
    &= \frac{1}{n^2} \sum_{i=1}^n \sigma^2 \theta_i^2 \\
    &\leq \frac{\sigma^2}{n^2} \sum_{i=1}^n C_n^2 \\
    &= \frac{\sigma^2  C_n^2}{n} .
\end{align*}
Our result now follows from Jensen's inequality: 
\begin{align*}
E \left\vert \frac{1}{n}\sum_{i=1}^n (X_i-\theta_i)\theta_i \right\vert
    &\leq \left[ E \left\{ \frac{1}{n}\sum_{i=1}^n (X_i-\theta_i)\theta_i \right\}^2 \right]^{1/2} \\
    &\leq \left( \frac{\sigma^2  C_n^2}{n} \right)^{1/2} \\
    &= \sigma C_n n^{-1/2} .
\end{align*}
\end{proof}

\begin{extralemma}
\label{lemma-estimation-rademacher}
Let $\mathcal{F}$ denote a uniformly bound, separable function class and fix $x_i \in \mathbb{R}$ for $i=1, \dots, n$. Define the mean-zero random variable $Z_f = n^{-1/2} \sum_{i=1}^n \epsilon_i f(x_i)$ where $\epsilon_i$ are independent, identically distributed Rademacher random variables. Also define the empirical $L_2$ norm as $\Vert f \Vert_{L_n}^2 = n^{-1} \sum_{i=1}^n f(x_i)^2$. Then
$$
E \sup_{f\in\mathcal{F}} \left\vert Z_f \right\vert \leq C \int_{0}^{2 b} \{ \log N(\mathcal{F}; L_n; t) \}^{1/2} dt , 
$$
where $C > 0$ is a universal constant and $\log N(\mathcal{F}; L_n; t)$ denotes the metric entropy of the function class $\mathcal{F}$ with respect to the metric $L_n$.
\end{extralemma}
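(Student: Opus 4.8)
The plan is to prove this exactly as one proves Dudley's entropy integral bound: by a chaining argument applied to the Rademacher (hence sub-Gaussian) process $\{Z_f : f\in\mathcal F\}$. The three ingredients I would assemble are (i) a sub-Gaussian increment bound for $Z$ in the metric $L_n$, (ii) the elementary maximal inequality for finitely many sub-Gaussian variables, and (iii) a geometrically refined sequence of $L_n$-nets of $\mathcal F$.

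First I would record the increment bound. For $f,g\in\mathcal F$ the difference $Z_f-Z_g=n^{-1/2}\sum_{i=1}^n\epsilon_i\{f(x_i)-g(x_i)\}$ is a weighted Rademacher sum, so by Hoeffding's lemma $E\exp\{\lambda(Z_f-Z_g)\}\le\exp\{\lambda^2\|f-g\|_{L_n}^2/2\}$ for all $\lambda\in\mathbb R$; in particular $E(Z_f-Z_g)^2=\|f-g\|_{L_n}^2$, and since $\mathcal F$ is uniformly bounded by $b$ its $L_n$-diameter is at most $2b$. I would also note that $Z_f$ and $\|f\|_{L_n}$ depend on $f$ only through the finite vector $(f(x_1),\dots,f(x_n))$, so every supremum below is the supremum of one fixed process over a bounded subset of $\mathbb R^n$; together with separability this makes the suprema measurable and lets me work with finite nets freely. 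The last ingredient is the standard fact that mean-zero variables $Y_1,\dots,Y_K$, each with variance proxy at most $\sigma^2$, satisfy $E\max_{k\le K}Y_k\le\sigma(2\log K)^{1/2}$.

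Then I would run the chaining. Set $\delta_j=2b\,2^{-j}$ for $j\ge 0$ and let $\mathcal F_j\subseteq\mathcal F$ be a minimal $\delta_j$-net, so $|\mathcal F_j|=N(\mathcal F;L_n;\delta_j)$ and, by the diameter bound, $\mathcal F_0$ may be taken to be a single point $f_0$. For each $f$ let $\pi_j f\in\mathcal F_j$ be a nearest element, so $\|f-\pi_j f\|_{L_n}\le\delta_j$ and $\|\pi_j f-\pi_{j-1}f\|_{L_n}\le 3\delta_j$. For a truncation level $J$ I would use the telescoping decomposition
\[
Z_f-Z_{f_0}=\sum_{j=1}^{J}(Z_{\pi_j f}-Z_{\pi_{j-1}f})+(Z_f-Z_{\pi_J f}),
\]
take $\sup_{f\in\mathcal F}$, and then take expectations. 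Because $Z_{\pi_j f}-Z_{\pi_{j-1}f}$ takes at most $|\mathcal F_j|\,|\mathcal F_{j-1}|\le N(\mathcal F;L_n;\delta_j)^2$ mean-zero values with variance proxy at most $(3\delta_j)^2$, the maximal inequality bounds the $j$-th term by $6\delta_j\{\log N(\mathcal F;L_n;\delta_j)\}^{1/2}$, while the remainder is at most $n^{1/2}\delta_J\to 0$ by Cauchy--Schwarz, so it drops out as $J\to\infty$. Since $t\mapsto N(\mathcal F;L_n;t)$ is non-increasing, $\delta_j\{\log N(\mathcal F;L_n;\delta_j)\}^{1/2}\le 2\int_{\delta_{j+1}}^{\delta_j}\{\log N(\mathcal F;L_n;t)\}^{1/2}\,dt$, so summing telescopes the bound for $E\sup_f(Z_f-Z_{f_0})$ into a constant multiple of $\int_0^{2b}\{\log N(\mathcal F;L_n;t)\}^{1/2}\,dt$. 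Applying the same argument to $-Z_f$, using $\sup_f|Z_f-Z_{f_0}|=\max\{\sup_f(Z_f-Z_{f_0}),\,\sup_f(Z_{f_0}-Z_f)\}$ with both quantities nonnegative (since $f_0\in\mathcal F$), and then $|Z_f|\le|Z_f-Z_{f_0}|+|Z_{f_0}|$, produces the claimed bound with a universal constant $C$; here one takes $f_0$ to be the zero function, which lies in $\mathcal F$ in the relevant applications, so that $E|Z_{f_0}|=0$ and no additive term survives.

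The main obstacle is the chaining step itself: one must pick the resolutions $\delta_j$ geometrically so that the quadratic growth $N(\mathcal F;L_n;\delta_j)^2$ in the number of increments at level $j$ costs only a bounded multiple inside the square root of the log-cardinality, and then check that the resulting series is dominated by the entropy integral and that the truncation remainder vanishes. By contrast, the sub-Gaussian tail bound, the finite maximal inequality, and the reduction to $\mathbb R^n$ (where separability makes the suprema measurable) are routine; the one place the precise hypotheses enter is in anchoring at a function of negligible $L_n$-norm.
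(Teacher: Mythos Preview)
Your proposal is correct and follows exactly the approach the paper indicates: verify that $Z_f$ has sub-Gaussian increments with parameter $\|f-g\|_{L_n}$, note the $L_n$-diameter is at most $2b$, and apply Dudley's chaining bound. The paper's own proof is only a two-line sketch that cites \cite{VandervaartWellner1996,Wainwright2019} for the chaining details, whereas you have written out the geometric nets, telescoping, and remainder control explicitly; your caveat about anchoring at a function of negligible $L_n$-norm is a fair one, and the paper does not address it either.
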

\begin{proof}
The proof follows from the observation that $Z_f$ has sub-Gaussian increments with parameter $\Vert f - g \Vert_{L_n}$. Further, we observe that if $\mathcal{F}$ is uniformly bound by $b$, then $\sup_{f,g\in\mathcal{F}} \Vert f - g \Vert_{L_n} \leq 2 b$. Now we apply a chaining argument and Dudley's integral bound to get our desired result. More details, definitions and a complete proof can be found in \cite{VandervaartWellner1996, Wainwright2019}.
\end{proof}

\begin{extralemma}
\label{lemma-approximation-expectation-bounds}
Let  $R_n(d; \theta) = n^{-1} \sum_{i=1}^n \{\theta_i - d(X_i)\}^2$ and $R(d; \theta) = E R_n(d; \theta)$. Let $\mathcal{D}_n$ denote one of $\mathcal{D}_{1,n}$ \eqref{eq-D1}, $\mathcal{D}_{0,n}$ \eqref{eq-D0}, or $\mathcal{C}_{0,n}$ (from Corollary \ref{corollary-pwc-rate}), we note these function classes have the same data-dependent range in terms of $X_i$ and $b_n$. Let $d^*(x)$ be the optimal separable estimator \eqref{oracle} and $\hat{d}^* = \arg\min_{d \in\mathcal{D}_n} R_n(d; \theta)$. Then for $n \geq 3$,
\begin{align*}
E \left\{ R_n(\hat{d}^*; \theta) \right.
  &\left. \vphantom{\hat{d}^*} - R_n(d^*; \theta) \mid d^* \not\in\mathcal{D}_n \right\} \\
  &\leq 5 \sigma^2 \log(n) + 4 \sigma (4 C_n + b_n) \log^{1/2}(n) + (b_n^2 + 8 b_n C_n + 12 C_n^2) .
\end{align*}
\end{extralemma}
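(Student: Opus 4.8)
\emph{Setup and reduction.} Write $Y = R_n(\hat{d}^*;\theta) - R_n(d^*;\theta)$, let $A = \{d^* \notin \mathcal{D}_n\}$, and set $M = \max_i \lvert X_i - \theta_i \rvert$. The plan is to reduce $Y$ to a crude pointwise bound on $R_n(\hat{d}^*;\theta)$, take the conditional expectation, and then fall back on Gaussian maximal inequalities; the one delicate point is that we condition on the rare event $A$. Since $R_n(d^*;\theta) \geq 0$ we have $Y \leq R_n(\hat{d}^*;\theta)$. Because $\hat{d}^* \in \mathcal{D}_n$, every $\hat{d}^*(X_i)$ lies in $[\min_j X_j - b_n, \max_j X_j + b_n]$; together with $\lvert \theta_i \rvert \leq C_n$ and $\max_j \lvert X_j \rvert \leq \max_j \lvert \theta_j \rvert + M \leq C_n + M$ this gives the deterministic bound $\lvert \theta_i - \hat{d}^*(X_i) \rvert \leq 2C_n + b_n + M$ for every $i$, hence
\[
Y \;\leq\; R_n(\hat{d}^*;\theta) \;\leq\; (2C_n + b_n + M)^2 .
\]
A sometimes-sharper alternative is $R_n(\hat{d}^*;\theta) \leq R_n(d_0;\theta) = n^{-1}\sum_i (X_i - \theta_i)^2$ for the $d_0 \in \mathcal{D}_n$ interpolating $d_0(X_i) = X_i$ (the identity clipped to the data range, or for $\mathcal{C}_{0,n}$ its piecewise-constant analogue), combined with $Z_i^2 \leq b_n^2 + Z_i^2 \mathbf{1}(\lvert Z_i \rvert > b_n)$; I would use whichever gives the tighter constants term by term. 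Expanding the square, it remains to bound $E\{M \mid A\}$ and $E\{M^2 \mid A\}$.

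\emph{Handling the conditioning.} By Proposition~\ref{prop-optimalproperties}, $d^*$ is always monotone non-decreasing with values in $[\min_i \theta_i, \max_i \theta_i]$, so for $\mathcal{D}_n \in \{\mathcal{D}_{1,n}, \mathcal{D}_{0,n}\}$ the event $A$ can occur only through a violation of the data-dependent range or of the budget $\tau_n$, and, exactly as in the proof of the preceding Lemma, either forces $M > b_n$; thus $A \subseteq \{M > b_n\}$. For $\mathcal{C}_{0,n}$ the oracle $d^*$ is smooth and hence never piecewise constant, so $A$ is the whole sample space and no conditioning is needed. In either case we control $E\{M \mid A\}$ and $E\{M^2 \mid A\}$ with $A$ either trivial or contained in $\{M > b_n\}$. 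For the unconditioned case the standard bounds $E\,M \leq \sigma\sqrt{2\log(2n)}$ and $E\,M^2 \leq 2\sigma^2 \log(2n) + \sigma^2$ (using Gaussian concentration for the variance) suffice. For $A \subseteq \{M > b_n\}$, a layer-cake computation using $P(M > t) \leq n\,e^{-t^2/2\sigma^2}$ together with the inclusion--exclusion lower bound $P(M > b_n) \geq n\bar{\Phi}(b_n/\sigma)$ (valid once $2n\bar{\Phi}(b_n/\sigma) \leq 1$, which holds for the relevant $b_n$) shows that conditioning on $\{M > b_n\}$ shifts the moments by only the ``obvious'' amount: $E\{M \mid A\} \leq b_n + c_1 \sigma (\log n)^{1/2}$ and $E\{M^2 \mid A\} \leq b_n^2 + c_2 \sigma^2 \log n$ with explicit $c_1, c_2$.

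\emph{Assembly.} Substituting these into the expansion of $(2C_n + b_n + M)^2$ (or of $n^{-1}\sum_i Z_i^2$), using $n \geq 3$ to replace $\log(2n)$ by $O(\log n)$ and to absorb the $O(\sigma^2)$ additive constants, and collecting the $\sigma^2 \log n$, $\sigma(\cdot)(\log n)^{1/2}$, and parameter-free pieces, yields a bound of the stated form $5\sigma^2 \log n + 4\sigma(4C_n + b_n)(\log n)^{1/2} + (b_n^2 + 8b_n C_n + 12C_n^2)$; the numerical constants are deliberately loose so that one estimate covers all three choices of $\mathcal{D}_n$.

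\emph{Main obstacle.} The hard step is precisely $E\{M \mid A\} \leq b_n + O(\sigma(\log n)^{1/2})$ and its square: a priori, conditioning on an event of polynomially (or even much) smaller probability could inflate the moments of $M$. The resolution is that $A$ is essentially the event $\{M > b_n\}$, and that passing from the unconditional law of $M$ to this event costs only a shift of order $b_n$ — which is exactly why the statement carries the $b_n^2$ and $b_n C_n$ terms. The naive estimate $E\{M^2 \mathbf{1}_A\}/P(A)$ with $\mathbf{1}_A$ dropped to $1$ is far too lossy; the inclusion--exclusion lower bound on $P(M > b_n)$ is what makes the layer-cake accounting close.
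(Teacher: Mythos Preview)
Your proposal diverges from the paper's proof and contains a genuine gap in exactly the place you flag as the main obstacle.

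\textbf{What the paper does.} After expanding the difference of squares to get
\[
R_n(\hat d^*;\theta)-R_n(d^*;\theta)\;\le\;\Bigl\{\max_i\bigl|d^*(X_i)-\hat d^*(X_i)\bigr|\Bigr\}^2+4C_n\,\max_i\bigl|d^*(X_i)-\hat d^*(X_i)\bigr|
\]
and then $\max_i|d^*(X_i)-\hat d^*(X_i)|\le M+2C_n+b_n$, the paper simply passes from the conditional expectation $E[\,\cdot\mid d^*\notin\mathcal D_n]$ to the \emph{unconditional} expectation of $(M+2C_n+b_n)^2+4C_n(M+2C_n+b_n)$ and plugs in $E\,M\le\sigma\sqrt{2\log(2n)}\le 2\sigma\sqrt{\log n}$ and $E\,M^2\le\sigma^2\{1+2\sqrt{\log n}+2\log n\}\le 5\sigma^2\log n$ for $n\ge 3$. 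Expanding $(M+2C_n+b_n)^2+4C_n(M+2C_n+b_n)=M^2+(8C_n+2b_n)M+(b_n^2+8b_nC_n+12C_n^2)$ then yields the stated constants verbatim. So the paper does not engage with the conditioning at all; your instinct that this step deserves justification is reasonable, but the paper's proof simply drops it. (Your bound $Y\le R_n(\hat d^*;\theta)\le(2C_n+b_n+M)^2$ is actually tighter than the paper's expansion, but would produce smaller constants in the parameter-free piece than those stated.)

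\textbf{The gap in your conditioning step.} You establish $A\subseteq\{M>b_n\}$ and then bound $E[M\mid M>b_n]$ and $E[M^2\mid M>b_n]$ by layer cake, using an inclusion--exclusion lower bound on $P(M>b_n)$. But the containment $A\subseteq\{M>b_n\}$ does \emph{not} give $E[g(M)\mid A]\le E[g(M)\mid M>b_n]$ for increasing $g$: a strict subset of $\{M>b_n\}$ can be concentrated arbitrarily far in the tail of $M$, so conditioning on it can make moments as large as you like. To run your argument you would need a lower bound on $P(A)$ itself, and the inclusion only gives an upper bound on $P(A)$. Your layer-cake computation therefore controls the wrong conditional expectation, and the step from $E[\,\cdot\mid M>b_n]$ to $E[\,\cdot\mid A]$ does not close.

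\textbf{A smaller error.} The class $\mathcal C_{0,n}$ from the corollary is not restricted to piecewise-constant functions; it is $\{d:\mathbb R\to[\min_iX_i-b_n,\max_iX_i+b_n]:TV(d)\le\lambda_n\}$. The smooth oracle $d^*$ can belong to it, and $A=\{d^*\notin\mathcal C_{0,n}\}$ is again contained in $\{M>b_n\}$ by the same range and total-variation reasoning; it is not the whole sample space.
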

\begin{proof}
Recall that $\max_{i=1}^n \vert \theta_i \vert \leq C_n$. First use the triangle inequality and the range bounds on $d^*(x)$ from Proposition \ref{prop-optimalproperties} and $\mathcal{D}_n$ to see that:
\begin{align*}
\max_{i=1}^n \left\vert d^*(X_i) - \hat{d}^*_n(X_i) \right\vert
    &\leq \max_{i=1}^n \left\vert d^*(X_i)\right\vert + \max_{i=1}^n \left\vert \hat{d}^*(X_i) \right\vert \\
    &\leq \max_{i=1}^n \vert \theta_i\vert + \left(\max_{i=1}^n \vert X_i\vert + b_n \right) \\
    &\leq \max_{i=1}^n \vert \theta_i\vert + \left(\max_{i=1}^n \vert X_i - \theta_i\vert + \max_{i=1}^n \vert \theta_i\vert + b_n \right) \\
    &= \max_{i=1}^n \vert X_i - \theta_i\vert + 2 \max_{i=1}^n \vert \theta_i\vert + b_n \\
    &\leq \max_{i=1}^n \vert X_i - \theta_i\vert + 2 C_n + b_n .
\end{align*}
Now, we can bound the difference in oracle loss as:
\begin{align*}
R_n(\hat{d}^*; \theta) &- R_n(d^*; \theta) \\
    &= \frac{1}{n} \sum_{i=1}^n \left\{\theta_i - \hat{d}^*(X_i) \right\}^2 - \frac{1}{n} \sum_{i=1}^n \{\theta_i - d^*(X_i)\}^2 \\
    &= \frac{1}{n} \sum_{i=1}^n \left[ \left\{ d^*(X_i) - \hat{d}^*(X_i) \right\}^2 + 2 (\theta_i - d^*(X_i))\left\{d^*(X_i) - \hat{d}^*(X_i) \right\} \right] \\
    &\leq \frac{1}{n} \sum_{i=1}^n \left[ \left\{ d^*(X_i) - \hat{d}^*(X_i) \right\}^2 + 2 \left\vert d^*(X_i) - \hat{d}^*(X_i) \right\vert \cdot \max_{i=1}^n \left\vert \theta_i - d^*(X_i)\right\vert \right] \\
    &\leq \frac{1}{n} \sum_{i=1}^n \left[ \left\{ d^*(X_i) - \hat{d}^*(X_i) \right\}^2 + 2 \left\vert d^*(X_i) - \hat{d}^*(X_i) \right\vert \left(C_n + C_n \right) \right] \\
    &= \frac{1}{n} \sum_{i=1}^n \left[ \left\{ \left\vert d^*(X_i) - \hat{d}^*(X_i) \right\vert \right\}^2 + 4 C_n \left\vert d^*(X_i) - \hat{d}^*(X_i) \right\vert \right] \\
    &\leq \frac{1}{n} \sum_{i=1}^n \left[ \left\{ \max_{i=1}^n \left\vert d^*(X_i) - \hat{d}^*(X_i) \right\vert \right\}^2 + 4 C_n \max_{i=1}^n \left\vert d^*(X_i) - \hat{d}^*(X_i) \right\vert \right] \\
    &= \left\{ \max_{i=1}^n \left\vert d^*(X_i) - \hat{d}^*(X_i) \right\vert \right\}^2 + 4 C_n \max_{i=1}^n \left\vert d^*(X_i) - \hat{d}^*(X_i) \right\vert .
\end{align*}
The first inequality in the above bound is a Holder 1-infinity bound; the second inequality is another instance of the triangle inequality and range bounds; and, the third inequality uses the fact that the max is larger than the average. Finally, we can plug these results into the main bound we are interested in. Below we use maximal inequalities for both Gaussian random variables and chi-squared random variables \cite{BoucheronLugosiMassart2013}. Let $n \geq 3$, then
\begin{align*}
E \{ R_n(\theta, \hat{d}^*_n)
  &- R_n(\theta, d^*) \mid d^* \not\in \mathcal{D}_n \} \\
    &\leq E \left[ \left\{ \max_{i=1}^n \left\vert d^*(X_i) - \hat{d}^*(X_i) \right\vert \right\}^2 + 4 C_n \max_{i=1}^n \left\vert d^*(X_i) - \hat{d}^*(X_i) \right\vert \mid \mathcal{D}\not\subseteq\mathcal{D}_n \right] \\
    &\leq E \left\{ \left( \max_{i=1}^n \vert X_i - \theta_i\vert + 2 C_n + b_n \right)^2 + 4 C_n \left( \max_{i=1}^n \vert X_i - \theta_i\vert + 2 C_n + b_n \right) \right\} \\
    &= E \left\{ \left( \max_{i=1}^n \vert X_i - \theta_i\vert \right)^2 + (8 C_n + 2 b_n) \max_{i=1}^n \vert X_i - \theta_i\vert \right. \\
        &\qquad\quad \left. \vphantom{\left( \max_{i=1}^n \vert X_i - \theta_i\vert \right)^2} + (b_n^2 + 8 b_n C_n + 12 C_n^2) \right\} \\
    &\leq \sigma^2 \left\{ 1 + 2 \{\log(n)\}^{1/2} + 2 \log(n) \right\} + (8 C_n + 2 b_n) \sigma \{2 \log(2 n)\}^{1/2} \\
        &\qquad\qquad+ (b_n^2 + 8 b_n C_n + 12 C_n^2) \\
    &\leq 5 \sigma^2 \log(n) + 4 \sigma (4 C_n + b_n) (\log n)^{1/2} + (b_n^2 + 8 b_n C_n + 12 C_n^2) . \\
\end{align*}
\end{proof}

\begin{thrm}
Let $X_i \sim N(\theta_i, \sigma^2)$ for $i=1,\dots,n$ be independent such that $\sigma^2 > 0$ is known and assume $\max_{i=1}^n \vert \theta_i \vert < C_n$. Further, let $b_n = \sigma (3 \log n)^{1/2}$ and $\tau_n = \sigma^{-2} (\max_{i=1}^n X_i - \min_{i=1}^n X_i + 2 b_n)^2$. Define $\hat{d}(x)$ as in \eqref{eq-estimator1} and let $d^*(x)$ be the optimal separable estimator \eqref{oracle}, then
$$
R(\hat{d}; \theta) - R(d^*; \theta) = \mathcal{O}\left\{ n^{-1/2} \left(C_n + \log^{1/2} n \right)^2 \right\} .
$$
\end{thrm}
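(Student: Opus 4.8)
The plan is a standard oracle-inequality argument splitting the excess risk into a \emph{fluctuation} term, governed by a uniform deviation over $\mathcal{D}_{1,n}$, and an \emph{approximation} term, active only on the rare event $\{d^{*}\notin\mathcal{D}_{1,n}\}$. Because $d^{*}$ is a deterministic function of $\theta$, $\hat{R}_1(d)$ is an unbiased estimate of $R(d;\theta)$ for each fixed $d$, and writing $X_i=\theta_i+(X_i-\theta_i)$ and expanding the square in $\hat{R}_1(d)$ gives, for every $d$,
\[
R_n(d;\theta)=\hat{R}_1(d)-\Bigl\{\tfrac1n\sum_{i=1}^n(X_i-\theta_i)^2-\sigma^2\Bigr\}-\tfrac2n\sum_{i=1}^n(X_i-\theta_i)\theta_i+2S_n(d),
\]
where $S_n(d)=\tfrac1n\sum_{i=1}^n\{(X_i-\theta_i)d(X_i)-\sigma^2 d'(X_i)\}$ has mean zero for each fixed $d$ by Stein's identity. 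The data-dependent but $d$-free terms cancel in differences, so setting $\hat{d}^{*}=\arg\min_{d\in\mathcal{D}_{1,n}}R_n(d;\theta)$ and using $\hat{R}_1(\hat{d})\le\hat{R}_1(\hat{d}^{*})$ together with $R_n(\hat{d}^{*};\theta)\le R_n(d^{*};\theta)$ on $\{d^{*}\in\mathcal{D}_{1,n}\}$, I would arrive at
\begin{align*}
R(\hat{d};\theta)-R(d^{*};\theta)
  &= E\{R_n(\hat{d};\theta)-R_n(d^{*};\theta)\}\\
  &\le 4\,E\sup_{d\in\mathcal{D}_{1,n}}|S_n(d)|+E\bigl[\{R_n(\hat{d}^{*};\theta)-R_n(d^{*};\theta)\}\,\mathbf 1\{d^{*}\notin\mathcal{D}_{1,n}\}\bigr].
\end{align*}

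For the approximation term, the choice $b_n=\sigma(3\log n)^{1/2}$ makes the earlier lemma on $P(d^{*}\notin\mathcal{D}_{1,n})$ give $P(d^{*}\notin\mathcal{D}_{1,n})\le 6n^{-1/2}$, while Lemma~\ref{lemma-approximation-expectation-bounds} bounds the conditional expectation by $5\sigma^2\log n+4\sigma(4C_n+b_n)\log^{1/2}n+(b_n^2+8b_nC_n+12C_n^2)$, which after substituting $b_n$ is $\mathcal{O}\{(C_n+\log^{1/2}n)^2\}$; the product is $\mathcal{O}\{n^{-1/2}(C_n+\log^{1/2}n)^2\}$, exactly the target rate.

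The fluctuation term is the crux. I would first replace the random class $\mathcal{D}_{1,n}$ by a deterministic one: on $\mathcal{E}_n=\{\max_i|X_i-\theta_i|\le\sigma(6\log n)^{1/2}\}$, whose complement has probability $\mathcal{O}(n^{-2})$ by a Gaussian tail bound, one has $\mathcal{D}_{1,n}\subseteq\mathcal{G}_n$, the class of monotone functions $\mathbb{R}\to[-M^{\circ},M^{\circ}]$ with $TV(d')\le\rho^{\circ}$ for deterministic $M^{\circ}=\mathcal{O}(C_n+\log^{1/2}n)$ and $\rho^{\circ}=\mathcal{O}\{\sigma^{-2}(C_n+\log^{1/2}n)^2\}$, and on $\mathcal{E}_n^{c}$ the crude bound $\sup_{d\in\mathcal{D}_{1,n}}|S_n(d)|=\mathcal{O}(\max_i(X_i-\theta_i)^2+C_n^2+\log n)$ together with $P(\mathcal{E}_n^{c})=\mathcal{O}(n^{-2})$ contributes only $o\{n^{-1/2}(C_n+\log^{1/2}n)^2\}$. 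On $\mathcal{G}_n$ I split $S_n(d)=A_n(d)-\sigma^2B_n(d)$, where $A_n(d)=\tfrac1n\sum_i\{(X_i-\theta_i)d(X_i)-E[(X_i-\theta_i)d(X_i)]\}$ and $B_n(d)=\tfrac1n\sum_i\{d'(X_i)-Ed'(X_i)\}$, and bound each by symmetrizing (independence suffices, not identical distribution), conditioning on $X_1,\dots,X_n$, and applying Lemma~\ref{lemma-estimation-rademacher} with the standard metric-entropy bounds $\log N(\cdot;L_n;t)\lesssim V/t$ for monotone functions of range $\le V$ and for functions of total variation $\le V$, which give Dudley integrals of order $V$. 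For $B_n$, every $d\in\mathcal{G}_n$ satisfies $0\le d'\le TV(d')\le\rho^{\circ}$, so taking $V=\rho^{\circ}$ gives $\sigma^2E\sup_d|B_n(d)|\lesssim n^{-1/2}\sigma^2\rho^{\circ}=\mathcal{O}\{n^{-1/2}(C_n+\log^{1/2}n)^2\}$. For $A_n$, the Ledoux--Talagrand contraction principle pulls the Gaussian multipliers $X_i-\theta_i$ out of the symmetrized average at the price of a factor $\max_i|X_i-\theta_i|$, reducing to the Rademacher complexity of $\mathcal{G}_n$ on the data (a monotone class of range $\le 2M^{\circ}$); taking $V=M^{\circ}$ gives $E\sup_d|A_n(d)|\lesssim n^{-1/2}M^{\circ}\,E[\max_i|X_i-\theta_i|]\lesssim n^{-1/2}(C_n+\log^{1/2}n)\log^{1/2}n=\mathcal{O}\{n^{-1/2}(C_n+\log^{1/2}n)^2\}$. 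Adding the pieces yields $E\sup_{d\in\mathcal{D}_{1,n}}|S_n(d)|=\mathcal{O}\{n^{-1/2}(C_n+\log^{1/2}n)^2\}$, and combining with the approximation bound finishes the proof.

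The main obstacle is the $A_n$ process, whose multipliers $X_i-\theta_i$ are Gaussian rather than bounded, compounded by the fact that $\mathcal{D}_{1,n}$ is itself random through $\tau_n$ and the data range. These are handled, respectively, by the contraction step (trading the multipliers for $\max_i|X_i-\theta_i|$, controlled in expectation by a Gaussian maximal inequality) and by the passage to the deterministic class $\mathcal{G}_n$ on $\mathcal{E}_n$ with a crude bound on the complement; the remaining care is to check that the monotone and bounded-variation metric-entropy estimates hold with constants independent of the realized design, so that the conditional chaining bound of Lemma~\ref{lemma-estimation-rademacher} can be integrated against the data distribution. A minor point is that $d'(X_i)$ must be well defined in $\hat{R}_1$ and $S_n$, which is ensured by restricting, via the structural theorem proved above, to piecewise-linear minimizers.
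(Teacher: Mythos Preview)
Your approach is correct and follows essentially the same route as the paper: the same estimation/approximation split, the same symmetrization plus Ledoux--Talagrand contraction for the $(X_i-\theta_i)d(X_i)$ process, the same chaining via the $\log N\lesssim V/t$ entropy bounds for monotone and bounded-variation classes, and the same use of Lemma~\ref{lemma-approximation-expectation-bounds} together with the $P(d^{*}\notin\mathcal{D}_{1,n})$ bound for the approximation term. Your organization is slightly tidier in two places: you cancel the $d$-free terms $\tfrac1n\sum(X_i-\theta_i)^2-\sigma^2$ and $\tfrac2n\sum(X_i-\theta_i)\theta_i$ directly in the difference rather than bounding them separately (the paper invokes Lemmas~\ref{lemma-estimation-one} and~\ref{lemma-estimation-two}), and you explicitly enlarge the random class $\mathcal{D}_{1,n}$ to a deterministic $\mathcal{G}_n$ on a high-probability event before symmetrizing, whereas the paper symmetrizes over the random class and then integrates the conditional chaining bound against the distribution of $X$.
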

\begin{proof}
Recall or define the following estimators: 
\begin{align*}
d^*(x) &= \arg\min_{d:\mathbb{R}\to\mathbb{R}} R(d; \theta) \\
\hat{d}^*(x) &= \arg\min_{d\in\mathcal{D}_{1,n}} R_n(d; \theta) \\
\hat{d}(x) &= \arg\min_{d\in\mathcal{D}_{1,n}} \hat{R}_1(d) .
\end{align*}
We break the excess risk into two terms: 
$$
R(\hat{d}; \theta) - R(d^*; \theta) = \left\{ R(\hat{d}; \theta) - R(\hat{d}^*; \theta) \right\} + \left\{ R(\hat{d}^*; \theta) - R(d^*; \theta) \right\} .
$$
We refer to these terms as estimation error and an approximation error, respectively. We start by bounding the approximation error.

First observe that we can bound the approximation error using the law of total expectation (tower rule), Lemma \ref{lemma-prob}, and Lemma \ref{lemma-approximation-expectation-bounds}:
\begin{align*}
R(\hat{d}^*; \theta) - R(d^*; \theta)
    &= E R_n(\hat{d}^*; \theta) - E R_n(d^*; \theta) \\
    &= E \left\{ R_n(\theta, \hat{\delta}^*) - R_n(\theta, \delta^*) \vert d^* \in \mathcal{D}_{1,n} \right\} P\left\{ d^* \in \mathcal{D}_{1,n} \right\} \\
        &\qquad+ E \left\{ R_n(\theta, \hat{\delta}^*) - R_n(\theta, \delta^*) \vert d^* \not\in \mathcal{D}_{1,n} \right\} P\left\{ d^* \not\in \mathcal{D}_{1,n} \right\} \\
    &\leq 0 \cdot P\left\{ d^* \in \mathcal{D}_{1,n} \right\} \\
        &\qquad\qquad+ E \left\{ R_n(\theta, \hat{\delta}^*) - R_n(\theta, \delta^*) \vert d^* \not\in \mathcal{D}_{1,n} \right\} P\left\{ d^* \not\in \mathcal{D}_{1,n} \right\} \\
    &= E \left\{ R_n(\theta, \hat{\delta}^*) - R_n(\theta, \delta^*) \vert d^* \not\in \mathcal{D}_{1,n} \right\} P\left\{ d^* \not\in \mathcal{D}_{1,n} \right\} \\
    &\leq \left\{ 5 \sigma^2 \log(n) + 4 \sigma (4 C_n + b_n) \log^{1/2}(n) + (b_n^2 + 8 b_n C_n + 12 C_n^2) \right\} \\
        &\qquad\qquad \cdot 6 n \exp\left(\frac{- b_n^2}{2 \sigma^2}\right) 
\end{align*}
Now, letting $b_n = \sigma (K \log n)^{1/2}$, we get:
$$
R(\hat{d}^*; \theta) - R(d^*; \theta) = \mathcal{O}\left\{ n^{1-\frac{K}{2}} \left(C_n + \sigma \log^{1/2} n \right)^2 \right\}
$$
We will plug-in $K = 3$ later to minimize the total excess risk bound.

Now we bound the estimation error using M-estimation techniques (symmetrization and metric entropy bounds) \cite{VandervaartWellner1996, Wainwright2019}. When necessary, we will use subscripts to denote which random variables an expectation is being taken over. Let $X_i'$ be an independent copy of $X_i$ and $\epsilon_i$ be independent Rademacher random variables. Then observe by a symmetrization argument and Theorem 2.2 in \cite{Koltchinskii2011}: 
\begin{align*}
E \sup_{d\in\mathcal{D}_n}
    & \left\vert \frac{1}{n}\sum_{i=1}^n \{(X_i - \theta_i) d(X_i) - E_{X_i} (X_i - \theta_i) d(X_i) \} \right\vert \\
    &= E_X \sup_{d\in\mathcal{D}_n} \left\vert \frac{1}{n}\sum_{i=1}^n \left\{(X_i - \theta_i) d(X_i) - E_{X_i'} (X_i' - \theta_i) d(X_i') \right\} \right\vert \\
    &= E_X \sup_{d\in\mathcal{D}_n} \left\vert E_{X' \vert X} \frac{1}{n}\sum_{i=1}^n \left\{(X_i - \theta_i) d(X_i) - (X_i' - \theta_i) d(X_i') \right\} \right\vert \\
    &\leq E_{X X'} \sup_{d\in\mathcal{D}_n} \left\vert \frac{1}{n}\sum_{i=1}^n \left\{(X_i - \theta_i) d(X_i) - (X_i' - \theta_i) d(X_i') \right\} \right\vert \\
    &= E_{X X' \epsilon} \sup_{d\in\mathcal{D}_n} \left\vert \frac{1}{n}\sum_{i=1}^n \epsilon_i \left\{(X_i - \theta_i) d(X_i) - (X_i' - \theta_i) d(X_i') \right\} \right\vert \\
    &\leq 2 E_{X \epsilon} \sup_{d\in\mathcal{D}_n} \left\vert \frac{1}{n}\sum_{i=1}^n \epsilon_i (X_i - \theta_i) d(X_i) \right\vert \\
    &= 2 E_{X \epsilon} \sup_{d\in\mathcal{D}_n} \left\vert \frac{1}{n}\sum_{i=1}^n \epsilon_i \vert X_i - \theta_i \vert d(X_i) \right\vert \\
    &= 2 E_{X \epsilon} \sup_{d\in\mathcal{D}_n} \left\vert \frac{1}{n}\sum_{i=1}^n \epsilon_i \vert X_i - \theta_i \vert d(X_i) \frac{\max_{i=1}^n \vert X_i - \theta_i\vert}{\max_{i=1}^n \vert X_i - \theta_i\vert} \right\vert \\
    &= 2 E_{X \epsilon} \left\{ \max_{i=1}^n \vert X_i - \theta_i\vert \cdot \sup_{d\in\mathcal{D}_n} \left\vert \frac{1}{n}\sum_{i=1}^n \epsilon_i \frac{\vert X_i - \theta_i\vert}{\max_{i=1}^n \vert X_i - \theta_i\vert} d(X_i) \right\vert \right\} \\
    &= 2 E_{X} \left[ \max_{i=1}^n \vert X_i - \theta_i\vert \cdot E_{\epsilon \vert X} \left\{ \sup_{d\in\mathcal{D}_n} \left\vert \frac{1}{n}\sum_{i=1}^n \epsilon_i \frac{\vert X_i - \theta_i\vert}{\max_{i=1}^n \vert X_i - \theta_i\vert} d(X_i) \right\vert \right\} \right] \\
    &\leq 2 E_{X} \left[ \max_{i=1}^n \vert X_i - \theta_i\vert\cdot E_{\epsilon \vert X} \left\{ \sup_{d\in\mathcal{D}_n} \left\vert \frac{1}{n}\sum_{i=1}^n \epsilon_i d(X_i) \right\vert \right\} \right] .
\end{align*}
Now we can bound the inner expectation using Lemma \ref{lemma-estimation-rademacher}. Van de Geer \cite{vandeGeer2000} uses \cite{BirmanSolomjak1967} and \cite{vandeGeer1990} to establish that
$$
\log N(\mathcal{D}_{1,n}; L_n; t) \leq A \frac{B_n}{t} ,
$$
for some universal constant $A > 0$ and $B_n = \max_{i=1}^n X_i - \min_{i=1}^n X_i + 2 b_n$. The numerator of the entropy bound comes from scaling the class of all monotone functions whose range is $[0,1]$ to have the range of $\mathcal{D}_{1,n}$. Notice that $B_n \leq 2 \max_{i=1}^n \vert X_i - \theta_i \vert + 2 C_n + 2 b_n$, so applying Lemma \ref{lemma-estimation-rademacher} and standard maximal inequalities \cite{BoucheronLugosiMassart2013}, for $n \geq 3$ we have:
\begin{align*}
E \sup_{d\in\mathcal{D}_n} 
    &\left\vert \frac{1}{n}\sum_{i=1}^n \{(X_i - \theta_i) d(X_i) - E_{X_i} (X_i - \theta_i) d(X_i) \} \right\vert \\
    &\leq 2 E_{X} \left[ \max_{i=1}^n \vert X_i - \theta_i\vert\cdot E_{\epsilon \vert X} \left\{ \sup_{d\in\mathcal{D}_n} \left\vert \frac{1}{n}\sum_{i=1}^n \epsilon_i d(X_i) \right\vert \right\} \right] \\
    &\leq 2 E_{X} \left[ \max_{i=1}^n \vert X_i - \theta_i\vert \cdot n^{-1/2} C \int_{0}^{2 B_n} \{ \log N(\mathcal{F}; L_n; t) \}^{1/2} dt \right] \\
    &\leq 2 E_{X} \left\{ \max_{i=1}^n \vert X_i - \theta_i\vert \cdot n^{-1/2} C \int_{0}^{2 B_n} ( A B_n / t )^{1/2} dt \right\} \\
    &= 2 E_{X} \left\{ \max_{i=1}^n \vert X_i - \theta_i\vert \cdot n^{-1/2} 2 \sqrt{2} C \sqrt{A} B_n \right\} \\
    &\leq \frac{4 C (2 A)^{1/2}}{n^{1/2}} E_{X} \left\{ \max_{i=1}^n \vert X_i - \theta_i\vert \cdot 2 \left(\max_{i=1}^n \vert X_i - \theta_i \vert + C_n + b_n\right) \right\} \\
    &= C_1 n^{-1/2} E_{X} \left\{ \left( \max_{i=1}^n \vert X_i - \theta_i\vert\right)^2 +  (C_n + b_n) \max_{i=1}^n \vert X_i - \theta_i \vert \right\} \\
    &\leq C_1 n^{-1/2} E_{X} \left\{  \max_{i=1}^n (X_i - \theta_i)^2 +  (C_n + b_n) \max_{i=1}^n \vert X_i - \theta_i \vert \right\} \\
    &\leq C_1 n^{-1/2} \left[ \sigma^2 \left\{ 1 + 2 (\log n)^{1/2} + 2 \log(n) \right\} + (C_n + b_n) \sigma \{2 \log(2 n)\}^{1/2} \right] \\
    &\leq C_1 n^{-1/2} \left\{ 5 \sigma^2 \log n + 2 \sigma (C_n + b_n) (\log n)^{1/2} \right\} \\
    &= \mathcal{O}\left[ n^{-1/2} \left\{ \log n + C_n (\log n)^{1/2} \right\} \right] ,
\end{align*}
where $C_1 = 4 C (2 A)^{1/2} > 0$ is a constant and the final big-O notation is simplified by plugging in $b_n = \sigma ( K \log n )^{1/2}$ with $K > 0$.

A similar bound, but with simpler symmetrization, can be used to bound the final component of our proof. First, we need to make a few observations about the class of derivatives in $\mathcal{D}_{1,n}$. Fix an arbitrary $d\in\mathcal{D}_{1,n}$, then because $d$ is bounded and monotone, we have $d'(x) \to 0$ as $x \to \pm\infty$. Since $TV(d') \leq \tau_n = \sigma^{-2} B_n^2$ we know that the class of derivatives is bound in $[-0.5 \tau_n, 0.5 \tau_n]$, hence the function class, $\{ d' \vert d\in\mathcal{D}_{1,n}\}$ has a range uniformly bound by $\tau_n/2$ and a total variation less than $\tau_n$. This implies the following metric entropy bound on the class of derivatives \cite{vandeGeer2000}:
$$
\log N(\{ d' \vert d\in\mathcal{D}_{1,n}\}; L_n; t) \leq A_2 \frac{\tau_n}{t}, 
$$
where $A_2 > 0$ is fixed. Recall that $\tau_n = \sigma^{-2} (\max_{i=1}^n X_i - \min_{i=1}^n X_i + 2 b_n)^2$ and let $n \geq 3$, then by symmetrization, a chaining bound (Lemma \ref{lemma-estimation-rademacher}), and standard maximal inequalities \cite{BoucheronLugosiMassart2013}, we have: 
\begin{align*}
E_X \sup_{d\in\mathcal{D}_n} 
    & \left\vert \frac{1}{n}\sum_{i=1}^n \{ d'(X_i) - E_{X_i} d'(X_i) \} \right\vert \\
    &= E_X \sup_{d\in\mathcal{D}_n} \left\vert \frac{1}{n}\sum_{i=1}^n \left\{ d'(X_i) - E_{X_i'} d'(X_i') \right\} \right\vert \\
    &= E_X \sup_{d\in\mathcal{D}_n} \left\vert E_{X' \vert X} \frac{1}{n}\sum_{i=1}^n \left\{ d'(X_i) - d'(X_i') \right\} \right\vert \\
    &\leq E_{X X'} \sup_{d\in\mathcal{D}_n} \left\vert \frac{1}{n}\sum_{i=1}^n \left\{ d'(X_i) - d'(X_i') \right\} \right\vert \\
    &= E_{X X' \epsilon} \sup_{d\in\mathcal{D}_n} \left\vert \frac{1}{n}\sum_{i=1}^n \epsilon_i \left\{ d'(X_i) - d'(X_i') \right\} \right\vert \\
    &\leq 2 E_{X \epsilon} \sup_{d\in\mathcal{D}_n} \left\vert \frac{1}{n}\sum_{i=1}^n \epsilon_i d'(X_i) \right\vert \\
    &\leq 2 E_X \left[ n^{-1/2} C \int_{0}^{\tau_n} \{ \log N(\{ d' \vert d\in\mathcal{D}_{1,n}\}; L_n; t) \}^{1/2} dt \right] \\
    &\leq 2 E_X \left\{ n^{-1/2} C \int_{0}^{\tau_n} ( A_2 \tau_n / t )^{1/2} dt \right\} \\
    &= 4 C A_2^{1/2} n^{-1/2} E_X (\tau_n ) \\
    &\leq 16 C A_2^{1/2} \sigma^{-2} n^{-1/2} E_X \left\{ \left( \max_{i=1}^n \vert X_i - \theta_i \vert + C_n + b_n \right)^2 \right\} \\
    &= C_2 n^{-1/2} E_X \left\{ \max_{i=1}^n (X_i - \theta_i)^2 + 2 (C_n + b_n) \max_{i=1}^n \vert X_i - \theta_i \vert + (C_n + b_n)^2 \right\} \\
    &\leq C_2 n^{-1/2} \left[ \sigma^2 \left\{1 + 2 (\log n)^{1/2} + 2 \log(n) \right\} \right. \\
        &\qquad \left. \vphantom{(\log n)^{1/2}} + 2 (C_n + b_n) \sigma \{2 \log(2 n)\}^{1/2} + (C_n + b_n)^2 \right] \\
    &\leq C_2 n^{-1/2} \left[ 5 \sigma^2 \log(n) + 4 (C_n + b_n) \sigma (\log n)^{1/2} + (C_n + b_n)^2 \right] \\
    &= n^{-1/2} \left\{ (C_n + b_n) + \sigma (\log n)^{1/2} \right\}^2 \\
    &= \mathcal{O}\left[ n^{-1/2} \left\{ C_n + \sigma (\log n)^{1/2} \right\}^2 \right] , 
\end{align*}
where $C_2 = 16 C A_2^{1/2} \sigma^{-2} > 0$ is a constant and the final big-O notation is simplified by plugging in $b_n = \sigma ( K \log n )^{1/2}$ with $K > 0$.

Thus our final bound on the estimation error can be found by a global supremum approach and combining the above results with Lemma \ref{lemma-estimation-one} and Lemma \ref{lemma-estimation-two}. We start with the basic inequality:
\begin{align*}
R(\hat{d}; \theta) 
    &- R(\hat{d}^*; \theta) \\
    &= E\left\{ R_n(\hat{d}; \theta) - R_n(\hat{d}^*; \theta) \right\} \\
    &= E\left\{ R_n(\hat{d}; \theta) - \hat{R}_1(\hat{d}) + \hat{R}_1(\hat{d}) - \hat{R}_1(\hat{d}^*) + \hat{R}_1(\hat{d}^*) - R_n(\hat{d}^*; \theta) \right\} \\
    &\leq E\left\{ R_n(\hat{d}; \theta) - \hat{R}_1(\hat{d}) + \hat{R}_1(\hat{d}^*) - R_n(\hat{d}^*; \theta) \right\} \\
    &\leq 2 E \sup_{d \in \mathcal{D}} \left\vert R_n(\hat{d}; \theta) - \hat{R}_1(\hat{d}) \right\vert \\
    &= 2 E \sup_{d \in \mathcal{D}} \left\vert \frac{1}{n} \sum_{i=1}^n \{ \theta_i - d(X_i) \}^2 - \frac{1}{n} \sum_{i=1}^n \{ X_i - d(X_i) \}^2 - 2 \sigma^2 \frac{1}{n} \sum_{i=1}^n d'(X_i) + \sigma^2 \right\vert \\
    &= 2 E \sup_{d \in \mathcal{D}} \left\vert \frac{1}{n} \sum_{i=1}^n \{ \sigma^2 - (X_i - \theta_i)^2 - 2 \theta_i(X_i - \theta_i) + 2 (X_i - \theta_i)d(X_i) \right. \\
        &\qquad \left. \vphantom{\sum_{i=1}^n} - 2 \sigma^2 E_{X_i} d'(X_i) + 2 \sigma^2 E_{X_i} d'(X_i) - 2 \sigma^2 d'(X_i) \} \right\vert \\
    &\overset{*}{=} 2 E \sup_{d \in \mathcal{D}} \left\vert \frac{1}{n} \sum_{i=1}^n \{ \sigma^2 - (X_i - \theta_i)^2 - 2 \theta_i(X_i - \theta_i) + 2 (X_i - \theta_i)d(X_i) \right. \\
        &\qquad \left. \vphantom{\sum_{i=1}^n} - 2 E_{X_i} \{ (X_i - \theta_i) d(X_i) \} + 2 \sigma^2 E_{X_i} d'(X_i) - 2 \sigma^2 d'(X_i) \} \right\vert \\
    &\leq 2 E \left\vert \frac{1}{n}\sum_{i=1}^n (X_i - \theta_i)^2 - \sigma^2 \right\vert 
        + 4 E \left\vert \frac{1}{n} \sum_{i=1}^n (X_i-\theta_i) \theta_i \right\vert \\
        &\qquad + 4 E \sup_{d\in\mathcal{D}_n} \left\vert \frac{1}{n}\sum_{i=1}^n \left\{(X_i - \theta_i) d(X_i) - E_{X_i} (X_i - \theta_i) d(X_i) \right\} \right\vert \\
        &\qquad + 4 E \sup_{d\in\mathcal{D}_n} \left\vert \frac{1}{n}\sum_{i=1}^n \left\{d'(X_i) - E_{X_i} d'(X_i) \right\} \right\vert \\
    &\leq \frac{2 \sqrt{2} \sigma^2}{\sqrt{n}} + \frac{2 \sigma C_n}{\sqrt{n}} + \mathcal{O}\left[ n^{-1/2} \left\{ \log n + C_n (\log n)^{1/2} \right\} \right] \\
        &\qquad+ \mathcal{O}\left[ n^{-1/2} \left\{ C_n + \sigma (\log n)^{1/2} \right\}^2 \right] ,
\end{align*}
where the * equality is an application of Stein's lemma \cite{SteinsLemma}. 

Finally, we recall that $b_n = \sigma (K \log n)^{1/2}$ is hidden in the numerator of the final two big-O terms above as a multiplicative factor. This suggests that we take $K = 3$ so that both the estimation error and approximation error go to zero at the same rate, up to log-terms. Thus 
$$
R(\hat{d}; \theta) - R(d^*; \theta) = \mathcal{O}\left\{ n^{-1/2} \left(C_n + \log^{1/2} n \right)^2 \right\} .
$$
\end{proof}

\begin{prop}
Let $Z \sim N(\mu,\sigma^2)$. Let $g:\mathbb{R}\to\mathbb{R}$ have finite total variation and let $\mathcal{J}(g) = \{t_1, t_2, \dots \}$ be the countable set of locations where $g$ has a discontinuity. Let $g'$ be the derivative of $g$ almost everywhere and assume that $E\vert g'(Z) \vert < \infty$. Then
$$
\frac{1}{\sigma^2} E[(Z-\mu) g(Z-\mu)] =  E[g'(Z-\mu)] + \sum_{t_k \in \mathcal{J}(g)} \phi_\sigma(t_k-\mu) \left\{ \lim_{x \downarrow t_k} g(x) - \lim_{x \uparrow t_k} g(x) \right\} .
$$
\end{prop}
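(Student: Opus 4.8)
The plan is to prove this as a Lebesgue--Stieltjes extension of Stein's lemma \cite{SteinsLemma}: decompose $g$ into an absolutely continuous part, which supplies the $E[g'(Z-\mu)]$ term exactly as in the classical identity, and a pure-jump part, which supplies the sum over $\mathcal{J}(g)$; then handle the two pieces separately by linearity and recombine. Write $J_k = \lim_{x\downarrow t_k} g(x) - \lim_{x\uparrow t_k} g(x)$ for the jump of $g$ at $t_k$ and set
\[
g_J(x) = \sum_{t_k\in\mathcal{J}(g)} J_k\, 1(x \ge t_k), \qquad g_c = g - g_J .
\]
Since $\sum_k |J_k| \le TV(g) < \infty$, the series defining $g_J$ converges absolutely and uniformly, $g_J$ has finite total variation and a.e.\ derivative $0$ (the jump set is Lebesgue-null), and $g_c$ is continuous of finite total variation. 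We will use that $g_c$ is in fact absolutely continuous with $g_c' = g'$ a.e.; equivalently, that $g$ has no singular-continuous (Cantor) component. This is what the hypothesis ``$g'$ is the derivative a.e.\ and $E|g'(Z-\mu)|<\infty$'' is there to secure, and in any case it is automatic in the setting of interest, where $g$ is piecewise linear with finitely many jumps.

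With the decomposition in hand I would assemble the identity from two building blocks. First, the classical Stein lemma applied to the absolutely continuous $g_c$ gives $\sigma^{-2} E[(Z-\mu) g_c(Z)] = E[g_c'(Z)] = E[g'(Z)]$. Second, for a single jump indicator, the Gaussian score identity $\tfrac{d}{du}\phi_\sigma(u) = -\sigma^{-2} u\,\phi_\sigma(u)$ and the fundamental theorem of calculus give
\[
\frac{1}{\sigma^2} E\!\left[(Z-\mu)\, 1(Z \ge t_k)\right] = \frac{1}{\sigma^2}\int_{t_k}^{\infty} (z-\mu)\, \phi_\sigma(z-\mu)\, dz = \phi_\sigma(t_k-\mu),
\]
which is exactly the asserted formula for the summand $1(\,\cdot\ \ge t_k)$, whose a.e.\ derivative is $0$ and whose jump at $t_k$ is $1$. (Here $g$ is composed with $Z$, and the weight $\phi_\sigma(t_k-\mu)$ is simply the value of the $N(\mu,\sigma^2)$ density at the jump location $t_k$.) Multiplying the second display by $J_k$, summing over $k$, adding the first identity, and using linearity then yields the proposition.

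The main technical obstacle is justifying the interchange of expectation with the possibly infinite sum: that $E[(Z-\mu) g_J(Z)] = \sum_k J_k E[(Z-\mu) 1(Z \ge t_k)]$ and that $\sum_k J_k \phi_\sigma(t_k-\mu)$ converges. Both follow from dominated convergence and $TV(g)<\infty$, since $\sum_k |J_k|\, 1(z \ge t_k) \le TV(g)$ for every $z$ with $|Z-\mu|\, TV(g)$ integrable, and $\sum_k |J_k|\, \phi_\sigma(t_k-\mu) \le \Vert\phi_\sigma\Vert_\infty\, TV(g) < \infty$. The other point that needs care, already noted, is that $g_c$ be genuinely absolutely continuous and not merely continuous of bounded variation. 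An alternative that makes both issues transparent is to run the whole argument as one Stieltjes integration by parts, $\sigma^{-2} E[(Z-\mu) g(Z)] = \int_{\mathbb{R}} \phi_\sigma(z-\mu)\, dg(z)$, with the boundary terms at $\pm\infty$ vanishing because $g$ is bounded and $\phi_\sigma$ decays, and then split the finite signed measure $dg$ into its absolutely continuous density $g'(z)\, dz$ and its atomic part $\sum_k J_k \delta_{t_k}$; when $g$ has only finitely many jumps --- the situation in our applications --- these convergence questions disappear and an elementary interval-by-interval integration by parts already suffices.
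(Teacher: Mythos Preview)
Your argument is correct and takes a genuinely different route from the paper's. You decompose $g = g_c + g_J$ into an absolutely continuous part and a pure-jump part, invoke the classical Stein identity on $g_c$, and handle each jump indicator $1(\cdot\ge t_k)$ by the one-line Gaussian computation $\sigma^{-2}\int_{t_k}^\infty (z-\mu)\phi_\sigma(z-\mu)\,dz = \phi_\sigma(t_k-\mu)$, then sum and recombine by linearity with dominated convergence. The paper instead works from scratch in the standard-normal case: it starts from $E[f'(Z)] = \int f'(z)\phi(z)\,dz$, writes $\phi(z)$ as the Gaussian tail integral $\int_z^\infty t\phi(t)\,dt$ (and its reflection for $z<0$), swaps the order of integration by Fubini, and then applies the fundamental theorem of Lebesgue integral calculus to replace $\int_0^t f'(z)\,dz$ by $f(t)_+ - f(0)_-$ minus the accumulated jump sum on $[0,t]$; the jump correction emerges from this rearrangement rather than from a prior decomposition of $f$, and a final change of variables recovers the general $(\mu,\sigma)$ case. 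Your approach is more modular and transparent about hypotheses; the paper's is self-contained in that it never cites the classical Stein lemma as a black box. Both proofs tacitly exclude a singular-continuous (Cantor) component of $g$ --- you flag this explicitly, while the paper's version of the fundamental theorem makes the same assumption silently. Your Stieltjes integration-by-parts alternative, splitting $dg$ into $g'(z)\,dz + \sum_k J_k\delta_{t_k}$, is the cleanest of the three and makes the structure most visible.
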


\begin{proof}
Let $Z \sim N(0,1)$ let $\phi$ denote the density function of $Z$. Fix $f : \mathbb{R} \to \mathbb{R}$ such that $f$ has bounded variation, $TV(f) < \infty$. Because $f$ has bound variation, it has at most a countable number of discontinuities (this follows from $f$ being the difference of two bounded, monotone non-decreasing function), which must be either jump-type or removable-type discontinuities. Let $\mathcal{J}(f) = \{t_1, t_2, \dots \}$ denote the set locations where $f$ has a discontinuity. Assume the following notational shorthand:
$$
\lim_{t \uparrow x} f(t) = f(x)_- \quad\text{and}\quad \lim_{t \downarrow x} f(t) = f(x)_+ .
$$

Observe that
\begin{align*}
E\{f'(Z)\}
    &= \int_{-\infty}^{\infty} f'(z) \phi(z) dz \\
    &= \int_{0}^{\infty} f'(z) \phi(z) dz - \int_{-\infty}^{0} f'(z) \{-\phi(z)\} dz \\
    &\overset{(i)}{=} \int_{0}^{\infty} f'(z) \left\{ \int_x^\infty t \phi(t) dt \right\} dz - \int_{-\infty}^{0} f'(z) \left\{ \int_{-\infty}^z t \phi(t) dt \right\} dz \\
    &\overset{(ii)}{=} \int_{0}^{\infty} t \phi(t) \left\{ \int_0^t f'(z) dz \right\} dt - \int_{-\infty}^{0} t \phi(t) \left\{ \int_t^0 f'(z) dz \right\} dt ,
\end{align*}
where $\phi(t)$ is the density function of a $N(0,1)$ random variable, (i) comes from $-t \phi(t) = \phi'(t)$, and (ii) is Fubini’s Theorem. Notice that by the fundamental theorem of Lebesgue integral calculus we have:
$$
f(t)_+ - f(0)_- = \int_0^t f'(x) dx + \sum_{t_i \in \mathcal{J}(f)} \{ f(t_i)_+ - f(t_i)_- \} 1(0 \leq t_i \leq t) .
$$
Now we can rearrange the first term in the above decomposition of $E\{f'(Z)\}$:
\begin{align*}
 \int_{0}^{\infty} t \phi(t) & \left\{ \int_0^t f'(z) dz \right\} dt \\
    &= \int_{0}^{\infty} t \phi(t) \left[ f(t)_+ - f(0)_- - \sum_{t_i \in \mathcal{J}(f)} \{ f(t_i)_+ - f(t_i)_- \} 1(0 \leq t_i \leq t) \right] dt \\
    &= \int_{0}^{\infty} t \phi(t) \left\{ f(t)_+ - f(0)_- \right\} dt \\
        &\qquad - \int_0^{\infty} t \phi(t) \left[ \sum_{t_i \in \mathcal{J}(f)} \{ f(t_i)_+ - f(t_i)_- \} 1(0 \leq t_i \leq t) \right] dt \\
    &\overset{(i)}{=} \int_{0}^{\infty} t \phi(t) \left\{ f(t)_+ - f(0)_- \right\} dt \\
        &\qquad - \sum_{t_i \in \mathcal{J}(f)} \{ f(t_i)_+ - f(t_i)_- \} \int_0^{\infty} t \phi(t) 1(0 \leq t_i \leq t) dt \\
    &= \int_{0}^{\infty} t \phi(t) \left\{ f(t)_+ - f(0)_- \right\} dt - \sum_{t_i \in \mathcal{J}(f) : t_i > 0} \{ f(t_i)_+ - f(t_i)_- \} \int_{t_i}^{\infty} t \phi(t) dt \\
    &= \int_{0}^{\infty} t \phi(t) \left\{ f(t)_+ - f(0)_- \right\} dt \\
        &\qquad- \sum_{t_i \in \mathcal{J}(f) : t_i > 0} \{ f(t_i)_+ - f(t_i)_- \} E\{Z \cdot 1(Z \geq t_i) \} ,
\end{align*}
where (i) is Fubini's theorem, noting that $\mathcal{J}(f)$ may be countably infinite. The second term follows analogously. Notice that $E \{Z \cdot 1(Z \leq a) \} = -\phi(a)$ and $E\{ Z \cdot 1(Z \geq a) \} = \phi(a)$ for all $a \geq 0$. So, putting these results together we get:
\begin{align*}
E&\{f'(Z)\} \\
    &= \left[ \int_{0}^{\infty} t \phi(t) \left\{ f(t)_+ - f(0)_- \right\} dt - \sum_{t_i \in \mathcal{J}(f) : t_i > 0} \{ f(t_i)_+ - f(t_i)_- \} E\{Z \cdot 1(Z \geq t_i) \} \right] \\
        & ~~ - \left[ \int_{-\infty}^{0} t \phi(t) \left\{ f(0)_+ - f(t)_- \right\} dt - \sum_{t_i \in \mathcal{J}(f) : t_i < 0} \{ f(t_i)_+ - f(t_i)_- \} E\{Z \cdot 1(Z \leq t_i) \} \right] \\
    &= E\{ Z f(Z)\} - \{ f(0)_+ - f(0)_- \} \phi(0) \\
        &\quad- \sum_{t_i \in \mathcal{J}(f) : t_i > 0} \{ f(t_i)_+ - f(t_i)_- \} \phi(t_i) - \sum_{t_i \in \mathcal{J}(f) : t_i < 0} \{ f(t_i)_+ - f(t_i)_- \} \phi(t_i)  \\
    &= E\{ Z f(Z)\} - \sum_{t_i \in \mathcal{J}(f) \cup \{0\}} \{ f(t_i)_+ - f(t_i)_- \} \phi(t_i) \\
    &= E\{ Z f(Z)\} - \sum_{t_i \in \mathcal{J}(f)} \{ f(t_i)_+ - f(t_i)_- \} \phi(t_i) .
\end{align*}
The last equality comes from the fact that if $0 \not\in \mathcal{J}(f)$, then $f(0)_+ - f(0)_- = 0$. We note that by our use of the fundamental theorem of Lebesgue integral calculus, removable-type discontinuities do not affect the value of the expectation because $f(t_i)_+ - f(t_i)_- = 0$ when $f$ has a removable-type discontinuity at $t_i$. We chose the notation $\mathcal{J}(f)$ to emphasize the importance of only jump-type discontinuities based on this fact.

Finally our result follows by a change of variables. Let $\mu, \sigma \in \mathbb{R}$ such that $\sigma > 0$. Define $Z = (X - \mu)/\sigma$ and $f(z) = g(\sigma z + \mu)$. Now,
\begin{align*}
\sigma^{-2} E\{ (X - \mu) h(X) \}
    &= \sigma^{-1} E\{ Z f(Z) \} \\
    &= \sigma^{-1} \left[ E\{ f'(Z)\} + \sum_{t_i \in \mathcal{J}(f)} \{ f(t_i)_+ - f(t_i)_- \} \phi(t_i) \right] \\
    &= \sigma^{-1} \left[ E\{ \sigma g'(X)\} + \sum_{u_i \in \mathcal{J}(g)} \{ g(u_i)_+ - g(u_i)_- \} \phi(\sigma^{-1}(u_i - \mu)) \right] \\
    &= E\{ g'(X)\} + \sum_{u_i \in \mathcal{J}(g)} \{ g(u_i)_+ - g(u_i)_- \} \phi_\sigma(u_i - \mu) ,
\end{align*}
where $\phi_\sigma(t)$ is the density function of a $N(0,\sigma^2)$ random variable.
\end{proof}

\begin{thrm}
Let $h > 0$. For every $g\in\mathcal{D}_{0,n}$ \eqref{eq-D0} that is right continuous at $X_1, \dots, X_n$, there exists a piecewise constant function, $\tilde{g}\in\mathcal{D}_{0,n}$, that has at most $n-1$ knots and satisfies $\tilde{g}(X_i) = g(X_i)$ from the right and $\hat{R}_0(\tilde{g}; h) \leq \hat{R}_0(g; h)$. The knots of $\tilde{g}(x)$ lie at the minimum of $\hat{f}_h(t)$ between each consecutive order statistics of $X_1, \dots, X_n$.
\end{thrm}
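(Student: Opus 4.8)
The plan is to reduce the claim to a one-dimensional ``mass-pushing'' inequality for the divergence-penalty term of $\hat{R}_0$. First I would recall the structure of the risk estimate: by the Stein-type identity with jump corrections established in the preceding Proposition, for any monotone non-decreasing $g$ the estimate takes the form $\hat{R}_0(g;h) = n^{-1}\sum_{i=1}^n (X_i - g(X_i))^2 - \sigma^2 + 2\sigma^2 \int \hat{f}_h(t)\,dg(t)$, where $dg$ is the Lebesgue--Stieltjes measure associated with $g$ and $\hat f_h \geq 0$ is the (continuous) kernel density estimate with bandwidth $h$. Two facts drive the proof: the squared-error term depends on $g$ only through the values $g(X_1),\dots,g(X_n)$, and $dg$ is a \emph{non-negative} measure because $g$ is non-decreasing.

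Next I would construct $\tilde g$. Order the data $X_{(1)} < \dots < X_{(n)}$ and, for $i = 2,\dots,n$, let $s_i$ be a minimizer of $\hat f_h$ over $[X_{(i-1)}, X_{(i)}]$, which exists by continuity of $\hat f_h$ and which I would argue can be taken in the open gap $(X_{(i-1)}, X_{(i)})$. Put $s_1 = -\infty$, $s_{n+1} = +\infty$, and define $\tilde g(x) = g(X_{(i)})$ for $x \in [s_i, s_{i+1})$. Then $\tilde g$ is piecewise constant with knots among $s_2,\dots,s_n$, hence at most $n-1$ of them; it is non-decreasing since $g(X_{(1)}) \leq \dots \leq g(X_{(n)})$; its range lies in that of $g$, so $\tilde g \in \mathcal{D}_{0,n}$; and $\tilde g(X_{(i)}) = g(X_{(i)})$ because the knots are placed strictly between consecutive order statistics. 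In particular the squared-error terms of $\hat R_0(\tilde g;h)$ and $\hat R_0(g;h)$ coincide, so it remains only to compare the penalty terms.

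It then suffices to show $\int \hat f_h\,d\tilde g \leq \int \hat f_h\,dg$. The measure $d\tilde g$ puts mass $g(X_{(i)}) - g(X_{(i-1)}) \geq 0$ at each $s_i$, so $\int \hat f_h\,d\tilde g = \sum_{i=2}^n \hat f_h(s_i)\{g(X_{(i)}) - g(X_{(i-1)})\}$. For $g$, I would split $\mathbb{R}$ into $(-\infty, X_{(1)}]$, the gaps $(X_{(i-1)}, X_{(i)}]$ for $i=2,\dots,n$, and $(X_{(n)}, \infty)$; the first and last pieces contribute non-negatively since $\hat f_h \geq 0$ and $dg \geq 0$, while on each gap, using right-continuity of $g$ at the data points to evaluate $dg$ of the interval, $\int_{(X_{(i-1)}, X_{(i)}]} \hat f_h\,dg \geq \big(\inf_{(X_{(i-1)}, X_{(i)}]} \hat f_h\big)\{g(X_{(i)}) - g(X_{(i-1)})\} \geq \hat f_h(s_i)\{g(X_{(i)}) - g(X_{(i-1)})\}$. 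Summing over $i$ gives the inequality, and therefore $\hat R_0(\tilde g;h) \leq \hat R_0(g;h)$.

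I expect the main difficulty to lie in two pieces of book-keeping rather than in any deep idea. First, pinning down the precise form of the penalty term of $\hat R_0$ for a general monotone $g$ that need not be piecewise constant: this uses the jump-corrected Stein identity together with a careful separation of the absolutely continuous and jump parts of $dg$ (a possible singular part, if permitted, only makes the inequality easier, since it contributes non-negatively to $\int \hat f_h\,dg$). Second, reconciling the requirement $\tilde g(X_i) = g(X_i)$ with the stated knot placement: one must keep the knots strictly inside each gap and fix the half-open convention consistently, which is automatic whenever the minimizer of $\hat f_h$ over $[X_{(i-1)}, X_{(i)}]$ falls in the interior and requires only a short additional remark in the degenerate case where that minimizer sits at an order statistic.
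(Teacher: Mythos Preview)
Your proposal is correct and follows essentially the same route as the paper: construct $\tilde g$ with knots at the minimizers of $\hat f_h$ in each gap, note that the squared-error term is unchanged, drop the non-negative contributions of $dg$ outside $[X_{(1)},X_{(n)}]$, and on each gap bound $\hat f_h$ below by its minimum to compare penalties. The only cosmetic difference is that you package the penalty as a single Lebesgue--Stieltjes integral $\int \hat f_h\,dg$, whereas the paper keeps the absolutely continuous and jump parts separate throughout; the underlying ``mass-pushing'' inequality is identical.
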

\begin{proof}
Let $x_1 < \dots < x_n$ be the order statistics of $X_1, \dots, X_n$ and let $g\in\mathcal{D}_{0,n}$ be right continuous at each $x_i$. First notice that by the extreme value theorem, because $\hat{f}_h(t)$ is continuous, there is a minimum that it obtains at least once on any closed interval $[a,b] \subset \mathbb{R}$, so
\begin{align*}
\int_a^b \hat{f}_h(x) g'(x) dx
    &\geq \int_a^b \left\{ \min_{x\in[a,b]} \hat{f}_h(x) \right\} g'(x) dx \\
    &= \left\{ \min_{x\in[a,b]} \hat{f}_h(x) \right\} \int_a^b g'(x) dx . \\
\end{align*}

Now, let $\tilde{g} \in \mathcal{D}_{0,n}$ be a degree-zero (piecewise constant) spline such that $\tilde{g}(x_i) = g(x_i)$ for each $i = 1, \dots, n$ from the right. This $\tilde{g} \in \mathcal{D}_{0,n}$ exists because its piecewise constant structure ensures that $TV(\tilde{g}) \leq TV(g)$ and if $g$ is bounded or monotone, $\tilde{g}$ is has the same bounds and is monotone. Let $t_i = \arg\min_{x\in[x_{i}, x_{i+1}]} \hat{f}_h(x)$ for $i = 1, \dots, n-1$ be the $n-1$ knots for $\tilde{g}$. Notice that because $x_i$ are distinct by assumption (and $X_i$ are distinct almost surely), the definition of $\hat{f}_h(t)$ \eqref{eq-kde} gives us that each of the $t_i$ must be distinct and lie in the open interval $(x_i,x_{i+1})$. Further, notice that $\tilde{g}$ has at most $n-1$ jump discontinuities and is constant on the resulting $n$ open intervals: $(-\infty, t_1), (t_1, t_2), \dots, (t_n, \infty)$. 

From the fundamental theorem of Lebesgue integral calculus we have:
$$
g(b)_+ - g(a)_- = \int_a^b g'(x) dx + \sum_{t_k \in \mathcal{J}(g) \cap [a,b]} \{ g(t_k)_+ - g(t_k)_- \} .
$$
Combining these two results and using the definition of $t_i$, we get that:
\begin{align*}
\int_{x_i}^{x_{i+1}} \hat{f}_h(x) g'(x) dx +& \sum_{t_k \in \mathcal{J}(g) \cap [x_i, x_{i+1}]} \hat{f}_h(t_k) \{ g(t_k)_+ - g(t_k)_- \} \\
    &\geq \hat{f}_h(t_i)  \int_{x_i}^{x_{i+1}} g'(x) dx + \hat{f}_h(t_i) \sum_{t_k \in \mathcal{J}(g) \cap [x_i, x_{i+1}]} \{ g(t_k)_+ - g(t_k)_- \} \\
    &= \hat{f}_h(t_i) \left[ \int_{x_i}^{x_{i+1}} g'(x) dx + \sum_{t_k \in \mathcal{J}(g) \cap [x_i, x_{i+1}]} \{ g(t_k)_+ - g(t_k)_- \} \right] \\
    &= \hat{f}_h(t_i) \{ g(x_{i+1})_+ - g(x_i)_- \} .
\end{align*}
Also, notice that because $g$ is monotone non-decreasing (one approach to extending this proof to functions with bound variation is to restrict the domain to $[x_1, x_n]$, to avoid potentially difficult asymptotic behavior),
\begin{align*}
\int_{-\infty}^{\infty} \hat{f}_h(x) g'(x) dx 
    &= \int_{-\infty}^{x_1} \hat{f}_h(x) g'(x) dx + \int_{x_1}^{x_{n}} \hat{f}_h(x) g'(x) dx + \int_{x_n}^{\infty} \hat{f}_h(x) g'(x) dx \\
    &\geq 0 + \int_{x_1}^{x_{n}} \hat{f}_h(x) g'(x) dx + 0 \\
    &= \sum_{i=1}^{n-1} \int_{x_i}^{x_{i+1}} \hat{f}_h(x) g'(x) dx ,
\end{align*}
and
\begin{align*}
\sum_{t_k \in \mathcal{J}(g)} \hat{f}_h(t_k) \{g(t_k)_+ - g(t_k)_-\}
    &= \sum_{t_k \in \mathcal{J}(g) \cap (-\infty,x_1)} \hat{f}_h(t_k) \{g(t_k)_+ - g(t_k)_-\} \\
        &\quad\quad+ \sum_{t_k \in \mathcal{J}(g) \cap [x_1,x_{n})} \hat{f}_h(t_k) \{g(t_k)_+ - g(t_k)_-\} \\
        &\quad\quad+ \sum_{t_k \in \mathcal{J}(g) \cap [x_n,\infty)} \hat{f}_h(t_k) \{g(t_k)_+ - g(t_k)_-\} \\
    &\geq 0 + \sum_{t_k \in \mathcal{J}(g) \cap [x_1,x_{n})} \hat{f}_h(t_k) \{g(t_k)_+ - g(t_k)_-\} + 0 \\
    &= \sum_{i=1}^{n-1} \sum_{t_k \in \mathcal{J}(g) \cap [x_i,x_{i+1})} \hat{f}_h(t_k) \{g(t_k)_+ - g(t_k)_-\} .
\end{align*}

Finally, because $\tilde{g}(x_i) = g(x_i)$ from the right (and $g$ is right continuous) for each $i = 1, \dots, n$ and $\tilde{g}$ has its jump points at $t_i \in (x_i, x_{i+1})$,
\begin{align*}
\hat{R}_0(g; h) 
    &= \frac{1}{n} \sum_{i=1}^n (x_i - g(x_i))^2 + 2 \sigma^2 \int_{-\infty}^{\infty} \hat{f}_h(x) g'(x) dx \\
        &\qquad + 2 \sigma^2 \sum_{t_k \in \mathcal{J}(g)} \hat{f}_h(t_k) \{g(t_k)_+ - g(t_k)_-\}  - \sigma^2 \\
    &\geq \frac{1}{n} \sum_{i=1}^n (x_i - g(x_i))^2 + 2 \sigma^2 \sum_{i=1}^{n-1} \left[ \int_{x_i}^{x_{i+1}} \hat{f}_h(x) g'(x) dx \right. \\
        &\qquad \left. \vphantom{\int_{x_i}^{x_{i+1}}} + \sum_{t_k \in \mathcal{J}(g) \cap [x_i,x_{i+1})} \hat{f}_h(t_k) \{g(t_k)_+ - g(t_k)_-\} \right] - \sigma^2 \\
    &\geq \frac{1}{n} \sum_{i=1}^n (x_i - g(x_i))^2 - \sigma^2 \\
        &\qquad + 2 \sigma^2 \sum_{i=1}^{n-1} \hat{f}_h(t_i) \left[ \int_{x_i}^{x_{i+1}} g'(x) dx + \sum_{t_k \in \mathcal{J}(g) \cap [x_i,x_{i+1})} \{g(t_k)_+ - g(t_k)_-\} \right] \\
    &= \frac{1}{n} \sum_{i=1}^n (x_i - g(x_i))^2 + 2 \sigma^2 \sum_{i=1}^{n-1} \hat{f}_h(t_i) \{g(x_{i+1})_- - g(x_i)_+\} - \sigma^2 \\
    &\overset{(i)}{=} \frac{1}{n} \sum_{i=1}^n (x_i - \tilde{g}(x_i))^2 + 2 \sigma^2 \sum_{i=1}^{n-1} \hat{f}_h(t_i) \{\tilde{g}(x_{i+1}) - \tilde{g}(x_i)\} - \sigma^2 \\
    &= \frac{1}{n} \sum_{i=1}^n (x_i - \tilde{g}(x_i))^2 + 2 \sigma^2 \sum_{t_k \in \mathcal{J}(\tilde{g})} \hat{f}_h(t_k) \{\tilde{g}(t_k)_+ - \tilde{g}(t_k)_-\} - \sigma^2 \\
    &\overset{(ii)}{=}  \hat{R}_0(\tilde{g}; h) .
\end{align*}
W note that $\tilde{g}$ is constant almost everywhere and only has discontinuities at $t_i \in (x_i, x_{i+1})$. These facts imply: (i) $g(x_{i+1})_- = g(x_{i+1})_+ = g(x_{i+1})$ for each $i=1, \dots, n-1$, and (ii) the integral term in $\hat{R}_0(\tilde{g}; h)$ is zero.
\end{proof}

\begin{thrm}
Let $X_i \sim N(\theta_i, \sigma^2)$ be independent such that $\sigma^2$ is known and $\max_i \vert \theta_i \vert < C_n$. Further, let $b_n = \{(8/3) \sigma^2 \log(n) \}^{1/2}$ and $h_n > 0$ such that $h_n \asymp \sigma n^{-1/6}$. Define $\tilde{d}_{h_n}(x)$ as in \eqref{eq-estimator0} and let $d^*(x)$ be the optimal separable estimator \eqref{oracle}, then
$$
R(\tilde{d}_{h_n}; \theta) - R(d^*; \theta) = \mathcal{O}\left\{ n^{-1/3} \left(C_n + \log^{1/2} n \right)^2 \right\} .
$$
\end{thrm}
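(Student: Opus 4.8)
The plan is to mirror the proof of the analogous theorem for $\hat{d}(x)$, with three substitutions: the class $\mathcal{D}_{1,n}$ is replaced by $\mathcal{D}_{0,n}$, the risk estimate $\hat{R}_1$ by $\hat{R}_0(\cdot; h_n)$, and ordinary Stein's lemma by the generalized Stein identity established above for functions of bounded variation. Write $\hat{d}^* = \arg\min_{d \in \mathcal{D}_{0,n}} R_n(d; \theta)$ and split the excess risk as
\[
R(\tilde{d}_{h_n}; \theta) - R(d^*; \theta) = \bigl\{ R(\tilde{d}_{h_n}; \theta) - R(\hat{d}^*; \theta) \bigr\} + \bigl\{ R(\hat{d}^*; \theta) - R(d^*; \theta) \bigr\}.
\]
For the approximation error (the second bracket), condition on whether $d^* \in \mathcal{D}_{0,n}$: on that event $R_n(\hat{d}^*; \theta) \le R_n(d^*; \theta)$ so the difference is non-positive, and on the complement apply Lemma~\ref{lemma-approximation-expectation-bounds}, which is already stated for $\mathcal{D}_{0,n}$. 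Since membership in $\mathcal{D}_{0,n}$ involves only the same data-dependent range condition present in $\mathcal{D}_{1,n}$, the argument of Lemma~\ref{lemma-prob} yields $P(d^* \notin \mathcal{D}_{0,n}) = \mathcal{O}\bigl( n \exp(-b_n^2/(2\sigma^2)) \bigr)$; plugging in $b_n^2 = (8/3)\sigma^2 \log n$ makes this $\mathcal{O}(n^{-1/3})$, and multiplying by the $\mathcal{O}\{(C_n + \log^{1/2} n)^2\}$ bound of Lemma~\ref{lemma-approximation-expectation-bounds} bounds the approximation error by $\mathcal{O}\{ n^{-1/3}(C_n + \log^{1/2} n)^2 \}$. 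The constant $8/3$ is chosen precisely so this matches the estimation-error rate derived below.

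For the estimation error, the basic inequality (using that $\tilde{d}_{h_n}$ minimizes $\hat{R}_0(\cdot; h_n)$ over $\mathcal{D}_{0,n}$) gives $R(\tilde{d}_{h_n}; \theta) - R(\hat{d}^*; \theta) \le 2\, E \sup_{d \in \mathcal{D}_{0,n}} \lvert R_n(d; \theta) - \hat{R}_0(d; h_n) \rvert$. Expanding $\{\theta_i - d(X_i)\}^2 - \{X_i - d(X_i)\}^2 + \sigma^2$ exactly as in the $\hat{d}(x)$ proof and then using the generalized Stein identity --- after averaging over $i$, so the Gaussian weights $n^{-1}\sum_i \phi_\sigma(\cdot - \theta_i)$ become $f_\theta$ --- to rewrite $n^{-1}\sum_i E_{X_i}(X_i-\theta_i)d(X_i)$ as $\sigma^2 \int f_\theta(x) d'(x)\,dx + \sigma^2 \sum_{t_k \in \mathcal{J}(d)} f_\theta(t_k)\{ d(t_k)_+ - d(t_k)_- \}$, the expression inside the supremum decomposes into (i) the three mean-zero pieces familiar from the $\hat{d}(x)$ proof --- $n^{-1}\sum_i\{\sigma^2 - (X_i - \theta_i)^2\}$, $n^{-1}\sum_i \theta_i(X_i - \theta_i)$, and the centered empirical process $n^{-1}\sum_i\{(X_i - \theta_i)d(X_i) - E_{X_i}(X_i - \theta_i)d(X_i)\}$ --- plus (ii) the density-estimation term
\[
2\sigma^2 \int \{ f_\theta(x) - \hat{f}_{h_n}(x) \}\, d'(x)\, dx \;+\; 2\sigma^2 \sum_{t_k \in \mathcal{J}(d)} \{ f_\theta(t_k) - \hat{f}_{h_n}(t_k) \}\{ d(t_k)_+ - d(t_k)_- \}.
\]
The first two pieces of (i) are $\mathcal{O}(\sigma^2 n^{-1/2})$ and $\mathcal{O}(\sigma C_n n^{-1/2})$ by Lemmas~\ref{lemma-estimation-one} and~\ref{lemma-estimation-two}; the centered process, after symmetrization and the $A B_n / t$ metric-entropy bound for monotone functions (with $B_n = \max_i X_i - \min_i X_i + 2 b_n$), is $\mathcal{O}[ n^{-1/2}\{ \log n + C_n \log^{1/2} n \} ]$ by the same computation as in the $\hat{d}(x)$ proof. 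Note that the derivative-of-$d$ empirical process that appeared there is \emph{absent} here, because $\hat{R}_0$ integrates $d'$ against $\hat{f}_{h_n}$ rather than against the empirical measure; its role is taken over by term (ii).

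Controlling (ii) is the crux. Bounding each of its two sums by $\lVert f_\theta - \hat{f}_{h_n} \rVert_\infty$ times the total variation of, respectively, the absolutely continuous and the jump part of $d$, we get that (ii) is at most $2\sigma^2 \lVert f_\theta - \hat{f}_{h_n} \rVert_\infty\, TV(d) \le 2\sigma^2 \lVert f_\theta - \hat{f}_{h_n} \rVert_\infty\, B_n$ for every $d \in \mathcal{D}_{0,n}$. Split $\lVert f_\theta - \hat{f}_{h_n} \rVert_\infty \le \lVert E\hat{f}_{h_n} - f_\theta \rVert_\infty + \lVert \hat{f}_{h_n} - E\hat{f}_{h_n} \rVert_\infty$; because $f_\theta$ is a finite mixture of $N(\theta_i, \sigma^2)$ densities its second derivative is uniformly bounded by a constant depending only on $\sigma$, so the (deterministic) bias term is $\mathcal{O}(h_n^2)$, while $E \lVert \hat{f}_{h_n} - E\hat{f}_{h_n} \rVert_\infty = \mathcal{O}(\{\log n / (n h_n)\}^{1/2})$ by standard uniform-in-$x$ rates for kernel density estimators. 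With $h_n \asymp \sigma n^{-1/6}$ the bias $h_n^2 \asymp n^{-1/3}$ dominates the fluctuation $\mathcal{O}(n^{-5/12}\log^{1/2} n)$, and since $E[B_n]$ and $(E B_n^2)^{1/2}$ are both $\mathcal{O}(C_n + \log^{1/2} n)$ by Gaussian maximal inequalities, a Cauchy--Schwarz step shows term (ii) contributes $\mathcal{O}\{ n^{-1/3}(C_n + \log^{1/2} n) \}$ in expectation. Collecting everything, the estimation error is $\mathcal{O}\{ n^{-1/3}(C_n + \log^{1/2} n) \} + \mathcal{O}[ n^{-1/2}\{ C_n + \log^{1/2} n \}^2 ]$, which together with the approximation error and the crude bound $C_n + \log^{1/2} n \le (C_n + \log^{1/2} n)^2$ valid for large $n$ gives the claimed rate.

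I expect term (ii) to be the main obstacle: one must identify, via the generalized Stein identity, that the discrepancy between $\hat{R}_0(\cdot; h_n)$ and $R_n(\cdot; \theta)$ is exactly $f_\theta - \hat{f}_{h_n}$ paired against $d'$ together with its atoms at the discontinuities of $d$; verify that the hypotheses of that identity (finite total variation, $E\lvert d'(X_i)\rvert < \infty$) hold uniformly over $\mathcal{D}_{0,n}$; and then couple a uniform kernel-density rate with the total-variation bound $TV(d) \le B_n$. The prescription $h_n \asymp \sigma n^{-1/6}$ is exactly what equates the resulting $h_n^2$ density bias with the $n^{-1/3}$ approximation error, which is why the overall rate is $n^{-1/3}$ rather than the $n^{-1/2}$ of the $\hat{d}(x)$ theorem. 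A secondary technicality, inherited from that proof, is that the function class $\mathcal{D}_{0,n}$ and hence $B_n$ are data dependent, so the empirical-process and density-estimation bounds carry $B_n$ as a random factor that is integrated out by a maximal inequality only at the very end.
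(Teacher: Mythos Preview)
Your overall architecture---the estimation/approximation split, the choice $b_n^2 = (8/3)\sigma^2\log n$, the basic inequality, the use of Lemmas~\ref{lemma-estimation-one}, \ref{lemma-estimation-two} and the monotone metric-entropy bound for the centered process, and the identification of the penalty discrepancy (your term (ii)) via the extended Stein identity---matches the paper exactly. The genuine divergence is in how you control term (ii). The paper also inserts $f_v = E\hat f_{h_n}$ and splits into a ``structural'' piece $f_\theta$ versus $f_v$ (computed directly to be of order $\sigma^{-1}-(\sigma^2+h_n^2)^{-1/2}\asymp h_n^2$, i.e.\ your bias) and a ``random'' piece $f_v$ versus $\hat f_{h_n}$; but for the random piece it does \emph{not} pass to $\lVert\hat f_{h_n}-f_v\rVert_\infty$. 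Instead it rewrites the penalty as $n^{-1}\sum_i g(X_i)$ for $g$ in a new class $\mathcal G_{n,h}=\{y\mapsto\int\phi_h(x-y)\,d'(x)\,dx+\sum_{t_k}\phi_h(t_k-y)\beta_d(t_k):d\in\mathcal D_{0,n}\}$, shows each such $g$ has range and total variation bounded by $O(\tau_n h_n^{-1})$, and applies symmetrization plus Lemma~\ref{lemma-estimation-rademacher} to this class, yielding a random-error bound of order $(nh_n^2)^{-1/2}(C_n+\log^{1/2}n)$. Your H\"older bound $\lVert f_\theta-\hat f_{h_n}\rVert_\infty\cdot TV(d)$ together with the uniform KDE rate $O(\{\log n/(nh_n)\}^{1/2})$ is more elementary and in fact sharper on the fluctuation side (order $n^{-5/12}$ versus the paper's $n^{-1/3}$ at $h_n\asymp n^{-1/6}$), though both are dominated by the $h_n^2\asymp n^{-1/3}$ bias so the final rate is unchanged. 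The trade-off is that your argument appeals to an off-the-shelf sup-norm KDE result whose usual statements assume i.i.d.\ data; here the $X_i$ are merely independent with shifting means, so you should note that the underlying chaining/VC argument only needs independence and a uniform envelope $\lVert\phi_{h_n}\rVert_\infty=O(h_n^{-1})$, which still holds. The paper's $\mathcal G_{n,h}$ route is self-contained in that it reuses the same Rademacher machinery already set up for the other empirical-process terms.
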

\begin{proof}
Most of proof follows identically to the proof of Theorem \ref{thrm-pwl-rate} except additional care needs to be paid to the estimation error of the penalty and the approximation error's probability bound has a 4 rather than a 6 because the function class no longer has its total variation constraint. 

Define $f_v(x) = n^{-1} \sum_{i=1}^n \phi_v(x-\theta_i)$ for $v = (\sigma^2 + h_n^2)^{1/2}$, observe that $f_v(x) = E \hat{f}_{h_n}(x)$ and $f_v'(x) = E \hat{f}_{h_n}'(x)$. For brevity of notation, define $\beta_d(t_k) = d(t_k)_+ - d(t_k)_-$ for any $d\in\mathcal{D}_{n,0}$ and corresponding $\mathcal{J}(d)$. Rather than the penalty estimation term in the proof of Theorem \ref{thrm-pwl-rate}:
$$
E \sup_{d\in\mathcal{D}_{0,n}} \left\vert \frac{1}{n}\sum_{i=1}^n \left\{ d'(X_i) - E_{X_i} d'(X_i) \right\} \right\vert ,
$$
our task is now to bound: 
\begin{align*}
E \sup_{d\in\mathcal{D}_{0,n}} \left\vert \int_{-\infty}^{\infty} f_\theta(x) d'(x) dx \right. &+ \sum_{t_k\in\mathcal{J}(d)} \beta_d(t_k) f_\theta(t_k) \\
    \qquad\qquad & \left. - \int_{-\infty}^{\infty} \hat{f}_{h_n}(x) d'(x) dx - \sum_{t_k\in\mathcal{J}(d)} \beta_d(t_k) \hat{f}_{h_n}(t_k) \right\vert .
\end{align*}
This term is challenging to analyze because both $\mathcal{D}_{0,n}$ and $\hat{f}_{h_n}(x)$ are random. To address this challenge, we introduce $f_v(x)$ and use the triangle inequality to study both a structural error term:
\begin{align*}
E \sup_{d\in\mathcal{D}_{0,n}} \left\vert \int_{-\infty}^{\infty} f_\theta(x) d'(x) dx \right. &+ \sum_{t_k\in\mathcal{J}(d)} \beta_d(t_k) f_\theta(t_k) \\
    \qquad\qquad & \left. - \int_{-\infty}^{\infty} f_v(t_k) d'(x) dx - \sum_{t_k\in\mathcal{J}(d)} \beta_d(t_k) f_v(t_k) \right\vert
\end{align*}
and a random error term:
\begin{align*}
E \sup_{d\in\mathcal{D}_{0,n}} \left\vert \int_{-\infty}^{\infty} f_v(x) d'(x) dx \right. &+ \sum_{t_k\in\mathcal{J}(d)} \beta_d(t_k) f_v(t_k) \\
    \qquad\qquad & \left. - \int_{-\infty}^{\infty} \hat{f}_{h_n}(x) d'(x) dx - \sum_{t_k\in\mathcal{J}(d)} \beta_d(t_k) \hat{f}_{h_n}(t_k) \right\vert .
\end{align*}
 
Starting with the structural term, we apply Proposition \ref{prop-extendedSL} utilizing the fact that $f_v(t) = E \hat{f}_{h_n}(x)$ and that $v \geq \sigma > 0$. Define $Y_i = X_i + Z_i$ where $Z_i \sim N(0, h_n^2)$ is independent from all other random variables. Now we can bound the structural error utilizing the fact that the only randomness in this term is in $\mathcal{D}_{0,n}$:
\begin{align*}
E \sup_{d\in\mathcal{D}_{0,n}} & \left\vert \int_{-\infty}^{\infty} f_\theta(x) d'(x) dx + \sum_{t_k\in\mathcal{J}(d)} \beta_d(t_k) f_\theta(t_k) \right. \\
  & \qquad\qquad \left. - \int_{-\infty}^{\infty} f_v(x) d'(x) dx - \sum_{t_k\in\mathcal{J}(d)} \beta_d(t_k) f_v(t_k) \right\vert \\
&= E \sup_{d\in\mathcal{D}_{0,n}} \left\vert \frac{1}{n} \sum_{i=1}^n E_{X_i} \left\{\left(\frac{X_i - \theta_i}{\sigma^2}\right) d(X_i)\right\} - \frac{1}{n} \sum_{i=1}^n E_{Y_i} \left\{ \left(\frac{Y_i - \theta_i}{v^2}\right) d(Y_i) \right\} \right\vert \\
&= E \sup_{d\in\mathcal{D}_{0,n}} \left\vert \frac{1}{n} \sum_{i=1}^n \int_{-\infty}^{\infty} \left(\frac{x - \theta_i}{\sigma^2}\right) \frac{1}{\sigma} \phi\left(\frac{x - \theta_i}{\sigma}\right) d(x) dx \right. \\
  &\qquad\qquad \left.- \frac{1}{n} \sum_{i=1}^n \int_{-\infty}^{\infty} \left(\frac{x - \theta_i}{v^2}\right) \frac{1}{v} \phi\left(\frac{x - \theta_i}{v}\right) d(x) dx \right\vert \\
&\leq E \sup_{d\in\mathcal{D}_{0,n}} \frac{1}{n} \sum_{i=1}^n \left\vert \int_{-\infty}^{\infty} \left(\frac{x - \theta_i}{\sigma^2}\right) \frac{1}{\sigma} \phi\left(\frac{x - \theta_i}{\sigma}\right) d(x) dx \right. \\
  &\qquad\qquad \left.- \int_{-\infty}^{\infty} \left(\frac{x - \theta_i}{v^2}\right) \frac{1}{v} \phi\left(\frac{x - \theta_i}{v}\right) d(x) dx \right\vert \\
&= E \sup_{d\in\mathcal{D}_{0,n}} \frac{1}{n} \sum_{i=1}^n \left\vert \int_{-\infty}^{\infty} \left\{ \frac{y}{\sigma^3} \phi\left(\frac{y}{\sigma}\right) - \frac{y}{v^3} \phi\left(\frac{y}{v}\right) \right\} d(y+\theta_i) dy \right\vert \\
&\leq E \left\{ \sup_{d\in\mathcal{D}_{0,n}} \sup_{x\in\mathbb{R}} \vert d(x) \vert \right\} \cdot  \int_{-\infty}^{\infty} \left\vert \frac{y}{\sigma^3} \phi\left(\frac{y}{\sigma}\right) - \frac{y}{v^3} \phi\left(\frac{y}{v}\right) \right\vert dy \\
&= \frac{2}{(2 \pi)^{1/2}} \left(\frac{1}{\sigma} - \frac{1}{v}\right) \cdot E \left\{ \sup_{d\in\mathcal{D}_{0,n}} \sup_{x\in\mathbb{R}} \vert d(x) \vert \right\} \\
&\leq \frac{2}{(2 \pi)^{1/2}} \left(\frac{1}{\sigma} - \frac{1}{v}\right) \cdot E \left\{ \max_{i=1}^n \vert X_i\vert + b_n \right\} \\
&\leq \frac{2}{(2 \pi)^{1/2}} \left(\frac{1}{\sigma} - \frac{1}{v}\right) \cdot E \left\{ \max_{i=1}^n \vert X_i - \theta_i\vert + C_n + b_n \right\} \\
&\leq \frac{2}{(2 \pi)^{1/2}} \left(\frac{1}{\sigma} - \frac{1}{v}\right) \cdot \left[ \{2 \sigma^2 \log(2n) \}^{1/2} + C_n + b_n \right] \\
&= \frac{2 \left[ \{2 \sigma^2 \log(2n) \}^{1/2} + C_n + b_n \right]}{(2 \pi)^{1/2}} \left\{ \sigma^{-1} - (\sigma^2 + h_n^2)^{-1/2} \right\} . \\
\end{align*}
The aggressive use of triangle inequality in the first inequality may seem overly aggressive at first, but we note that if $\theta_1 = \dots = \theta_n = 0$, then the bound is actually an equality. The following step where we sup-out the $d(x)$ may be loose. The last inequality is an application of a Gaussian maximal inequality \cite{BoucheronLugosiMassart2013}. We observe that as $h_n \downarrow 0$, the structural error term converges to zero. 

Next we bound the random error term. To do this we will again use symmetrization and a chaining bound utilizing the fact that $f_v(t) = E \hat{f}_{h_n}(t)$; however, we will first need to define and study the new function class:
$$
\mathcal{G}_{n,h} = \left\{ g(y) = \int_{-\infty}^{\infty} \phi_h(x-y) d'(x) dx + \sum_{t_k\in\mathcal{J}(d)} \phi_h(t_k-y) \beta_d(t_k) ~ : ~ d\in\mathcal{D}_{0,n} \right\}
$$
for some $h > 0$. In particular, we will demonstrate that $g \in \mathcal{G}_{n,h}$ is bound and has bound total variation. Fixed $h > 0$ and let $\tau_n$ be such that $TV(d) \leq \tau_n < \infty$ for every $d\in\mathcal{D}_{0,n}$ (for monotone functions this is just the range). We begin by showing that $\mathcal{G}_{n,h}$ is uniformly bounded. Observe that by the triangle inequality, we can bound the range of every $g(y)$ as follows:
\begin{align*}
\left\vert g(y) \right\vert
    &= \left\vert \int_{-\infty}^{\infty} \phi_h(x-y) d'(x) dx + \sum_{t_k\in\mathcal{J}(d)} \phi_h(t_k-y) \beta_d(t_k) \right\vert \\
    &\leq \int_{-\infty}^{\infty} \phi_h(x-y) \vert d'(x) \vert dx + \sum_{t_k\in\mathcal{J}(d)} \phi_h(t_k-y) \vert \beta_d(t_k) \vert \\
    &\leq (2 \pi h^2)^{-1/2} \int_{-\infty}^{\infty} \vert d'(x) \vert dx + (2 \pi h^2)^{-1/2} \sum_{t_k\in\mathcal{J}(d)} \vert \beta_d(t_k) \vert \\
    &= (2 \pi h^2)^{-1/2} \cdot TV(d) \\
    &\leq \tau_n (2 \pi h^2)^{-1/2} .
\end{align*}
Next we bound the total variation of $g(y)$ by applying the triangle inequality and Fubini's theorem:
\begin{align*}
TV(g) 
    &= \int_{-\infty}^{\infty} \left\vert g'(y) \right\vert dy \\
    &= \int_{-\infty}^{\infty} \left\vert \frac{\partial}{\partial y} \int_{-\infty}^{\infty} \phi_h(x-y) d'(x) dx + \frac{\partial}{\partial y} \sum_{t_k\in\mathcal{J}(d)} \phi_h(t_k-y) \beta_d(t_k) \right\vert dy \\
    &= \int_{-\infty}^{\infty} \left\vert \int_{-\infty}^{\infty} \frac{\partial}{\partial y} \{ \phi_h(x-y) d'(x) \} dx + \sum_{t_k\in\mathcal{J}(d)} \frac{\partial}{\partial y} \{ \phi_h(t_k-y) \beta_d(t_k) \} \right\vert dy \\
    &= \int_{-\infty}^{\infty} \left\vert \int_{-\infty}^{\infty} \frac{x-y}{h^2} \phi_h(x-y) d'(x) dx + \sum_{t_k\in\mathcal{J}(d)} \frac{t_k-y}{h^2} \phi_h(t_k-y) \beta_d(t_k) \right\vert dy \\
    &\leq \int_{-\infty}^{\infty} \int_{-\infty}^{\infty} \frac{\vert x-y\vert}{h^2} \phi_h(x-y) \vert d'(x)\vert dx ~ dy \\
        &\qquad+ \int_{-\infty}^{\infty} \sum_{t_k\in\mathcal{J}(d)} \frac{\vert t_k-y\vert}{h^2} \phi_h(t_k-y) \vert\beta_d(t_k)\vert dy \\
    &= \int_{-\infty}^{\infty} \int_{-\infty}^{\infty} \frac{\vert x-y\vert}{h^2} \phi_h(x-y) \vert d'(x)\vert dy ~ dx \\
        &\qquad+ \sum_{t_k\in\mathcal{J}(d)} \int_{-\infty}^{\infty} \frac{\vert t_k-y\vert}{h^2} \phi_h(t_k-y) \vert\beta_d(t_k)\vert dy \\
    &= \int_{-\infty}^{\infty} 2 (2 \pi h^2)^{-1/2} \vert d'(x)\vert dx + \sum_{t_k\in\mathcal{J}(d)} 2 (2 \pi h^2)^{-1/2} \vert\beta_d(t_k)\vert \\
    &= 2 (2 \pi h^2)^{-1/2} \cdot TV(d) \\
    &\leq 2 \tau_n (2 \pi h^2)^{-1/2} .
\end{align*}
Thus $\mathcal{G}_{n,h}$ inherits the boundedness and bounded total variation from $\mathcal{D}_{0,n}$ for any $h > 0$. Moreover, both the range and total variation of functions in $\mathcal{G}_{n,h}$ are bound by $2 \tau_n (2 \pi h^2)^{-1/2}$, which will make our chaining bounds simple. 

Before proceeding with the symmetrization argument, observe that by Fubini's theorem and our definition of $\mathcal{G}_{n,h}$, for every $d\in\mathcal{D}_{0,n}$ and $h_n > 0$ there exists a $g \in \mathcal{G}_{n,h_n}$ such that:
\begin{align*}
\int_{-\infty}^{\infty} & f_v(x) d'(x) dx + \sum_{t_k\in\mathcal{J}(d)} \beta_d(t_k) f_v(t_k) \\
    &= \frac{1}{n} \sum_{i=1}^n \int_{-\infty}^{\infty} \phi_v(x-\theta_i) d'(x) dx + \frac{1}{n} \sum_{i=1}^n \sum_{t_k\in\mathcal{J}(d)} \beta_d(t_k) \phi_v(x-\theta_i) \\
    &= \frac{1}{n} \sum_{i=1}^n \int_{-\infty}^{\infty} E_{X_i} \phi_h(x-X_i) d'(x) dx + \frac{1}{n} \sum_{i=1}^n \sum_{t_k\in\mathcal{J}(d)} \beta_d(t_k) E_{X_i} \phi_h(x-X_i) \\
    &= \frac{1}{n} \sum_{i=1}^n E_{X_i} \left\{ \int_{-\infty}^{\infty} \phi_h(x-X_i) d'(x) dx + \sum_{t_k\in\mathcal{J}(d)} \beta_d(t_k) \phi_h(x-X_i) \right\} \\
    &= \frac{1}{n} \sum_{i=1}^n E_{X_i} g(X_i) \\
\end{align*}
and
\begin{align*}
\int_{-\infty}^{\infty} & \hat{f}_{h_n}(x) d'(x) dx + \sum_{t_k\in\mathcal{J}(d)} \beta_d(t_k) \hat{f}_{h_n}(t_k) \\
    &= \frac{1}{n} \sum_{i=1}^n \int_{-\infty}^{\infty} \phi_{h_n}(x-X_i) d'(x) dx + \frac{1}{n} \sum_{i=1}^n \sum_{t_k\in\mathcal{J}(d)} \beta_d(t_k) \phi_{h_n}(x-\theta_i) \\
    &= \frac{1}{n} \sum_{i=1}^n g(X_i) .
\end{align*}
Putting these together, we can proceed with our symmetrization argument. Let $X_i'$ be independent copies of $X_i$ and let $\epsilon_i$ be independent Rademacher random variables. Then,
\begin{align*}
E_X \sup_{d\in\mathcal{D}_n} & \left\vert \int_{-\infty}^{\infty} f_v(x) d'(x) dx + \sum_{t_k\in\mathcal{J}(d)} \beta_d(t_k) f_v(t_k) \right. \\
        &\qquad\qquad \left. - \int_{-\infty}^{\infty} \hat{f}_{h_n}(x) d'(x) dx - \sum_{t_k\in\mathcal{J}(d)} \beta_d(t_k) \hat{f}_{h_n}(t_k) \right\vert \\
    &= E_X \sup_{g\in\mathcal{G}_{n,h_n}} \left\vert \frac{1}{n} \sum_{i=1}^n E_{X_i} g(X_i) - \frac{1}{n} \sum_{i=1}^n g(X_i) \right\vert \\
    &= E_X \sup_{g\in\mathcal{G}_{n,h_n}} \left\vert \frac{1}{n} \sum_{i=1}^n E_{X_i'} g(X_i') - \frac{1}{n} \sum_{i=1}^n g(X_i) \right\vert \\
    &= E_X \sup_{g\in\mathcal{G}_{n,h_n}} \left\vert \frac{1}{n} \sum_{i=1}^n E_{X_i' \vert X_i} \{ g(X_i') - g(X_i) \} \right\vert \\
    &= E_X \sup_{g\in\mathcal{G}_{n,h_n}} \left\vert E_{X' \vert X} \frac{1}{n} \sum_{i=1}^n \{ g(X_i') - g(X_i) \} \right\vert \\
    &\leq E_{X,X'} \sup_{g\in\mathcal{G}_{n,h_n}} \left\vert \frac{1}{n} \sum_{i=1}^n \{ g(X_i') - g(X_i) \} \right\vert \\
    &= E_{X,X',\epsilon} \sup_{g\in\mathcal{G}_{n,h_n}} \left\vert \frac{1}{n} \sum_{i=1}^n \epsilon_i \{ g(X_i') - g(X_i) \} \right\vert \\
    &\leq 2 E_{X,\epsilon} \sup_{g\in\mathcal{G}_{n,h_n}} \left\vert \frac{1}{n} \sum_{i=1}^n \epsilon_i g(X_i) \right\vert .
\end{align*}

Now we will use a chaining bound to finish our bound on the random term. Define $D_n = 2 \tau_n (2 \pi h_n^2)^{-1/2}$, where because $\mathcal{D}_{0,n}$ is uniformly bound and monotone non-decreasing, $\tau_n = \max_{i=1}^n X_{i} - \min_{i=1}^n X_{i} + 2 b_n$. Then, using our properties of $\mathcal{G}_{n,h_n}$, Lemma \ref{lemma-estimation-rademacher}, metric entropy results \cite{vandeGeer2000}, and Gaussian maximal inequalities \cite{BoucheronLugosiMassart2013} we have:
\begin{align*}
E_X \sup_{d\in\mathcal{D}_n} & \left\vert \int_{-\infty}^{\infty} f_v(x) d'(x) dx + \sum_{t_k\in\mathcal{J}(d)} \beta_d(t_k) f_v(t_k) \right. \\
        &\qquad \left. - \int_{-\infty}^{\infty} \hat{f}_{h_n}(x) d'(x) dx - \sum_{t_k\in\mathcal{J}(d)} \beta_d(t_k) \hat{f}_{h_n}(t_k) \right\vert \\
    &\leq 2 E_{X,\epsilon} \sup_{g\in\mathcal{G}_n} \left\vert \frac{1}{n} \sum_{i=1}^n \epsilon_i g(X_i) \right\vert \\
    &\leq 2 E_X \left[ C n^{-1/2} \int_0^{D_n} \{ \log N(\mathcal{G}_{n,h_n}; L_n; t) \}^{1/2} dt \right] \\
    &\leq 2 E_X \left\{ C n^{-1/2} \int_0^{D_n} (A_3 D_n / t) dt \right\} \\
    &= \frac{4 C \sqrt{A_3}}{\sqrt{n}} E_X \left\{ D_n \right\} \\
    &= 8 C A_3^{1/2} (2 \pi h_n^2 n)^{-1/2} E_X \left\{ \tau_n \right\} \\
    &= C_3 (2 \pi h_n^2 n)^{-1/2} E_X \left\{ \tau_n \right\} \\
    &= C_3 (2 \pi h_n^2 n)^{-1/2} E_X \left\{ X_{(n)} - X_{(1)} + 2 b_n \right\} \\
    &\leq C_3 (2 \pi h_n^2 n)^{-1/2} E_X \left\{ \max_{i=1}^n \vert X_i - \theta_i\vert + C_n + b_n \right\} \\
    &\leq C_3 (2 \pi h_n^2 n)^{-1/2} \left[ \{2 \sigma^2 \log(2 n)\}^{1/2} + C_n + b_n \right] ,
\end{align*}
where $A_3 > 0$ and $C_3 = 8 C A_3^{1/2}$ are a constants. We observe that as $h_n \downarrow 0$, the random error term grows. This suggests that we need to optimize $h_n$ to balance both the structural error and the random error.

Putting our results together we bound the estimation of the penalty term as:
\begin{align*}
E & \sup_{d\in\mathcal{D}_{0,n}} \left\vert \int_{-\infty}^{\infty} f_\theta(x) d'(x) dx + \sum_{t_k\in\mathcal{J}(d)} \beta_d(t_k) f_\theta(t_k) \right. \\
    & \qquad\qquad \left. - \int_{-\infty}^{\infty} \hat{f}_{h_n}(x) d'(x) dx - \sum_{t_k\in\mathcal{J}(d)} \beta_d(t_k) \hat{f}_{h_n}(t_k) \right\vert \\
    &\leq \frac{2 \left[ \{2 \sigma^2 \log(2n) \}^{1/2} + C_n + b_n \right]}{(2 \pi)^{1/2}} \left\{ \sigma^{-1} - (\sigma^2 + h_n^2)^{-1/2} \right\} \\
        &\qquad + C_3 (2 \pi h_n^2 n)^{-1/2} \left[ \{2 \sigma^2 \log(2 n)\}^{1/2} + C_n + b_n \right] \\
    &= \frac{C_3 \left[ \{2 \sigma^2 \log(2n) \}^{1/2} + C_n + b_n \right]}{(2 \pi)^{1/2}} \cdot \left[ (2 / C_3) \left\{ \sigma^{-1} - (\sigma^2 + h_n^2)^{-1/2} \right\} + (n h_n^2)^{-1/2} \right] \\
\end{align*}
This bound suggests that large $n$, the optimal $h_n$ is satisfies $h_n \asymp \sigma n^{-1/6}$. Thus we get our bound on the estimation error of the penalty term:
\begin{align*}
E \sup_{d\in\mathcal{D}_{0,n}} & \left\vert \int_{-\infty}^{\infty} f_\theta(x) d'(x) dx + \sum_{t_k\in\mathcal{J}(d)} \beta_d(t_k) f_\theta(t_k) \right. \\
    & \qquad\qquad \left. - \int_{-\infty}^{\infty} \hat{f}_{h_n}(x) d'(x) dx - \sum_{t_k\in\mathcal{J}(d)} \beta_d(t_k) \hat{f}_{h_n}(t_k) \right\vert \\
    &= \left[ \{2 \sigma^2 \log(2n) \}^{1/2} + C_n + b_n \right] \cdot \mathcal{O}\left( n^{-1/3} \right) \\
    &= \mathcal{O}\left\{ n^{-1/3} \left( C_n + \log^{1/2} n \right) \right\} ,
\end{align*}
when $b_n = \{ \sigma^2 K \log(n) \}^{1/2}$.

This rate bound dominates the convergence of the estimation error. With a slower rate of convergence in the estimation error, we choose a smaller value for $K$ to balance the convergence rates between the estimation error and approximation error. Letting $K = 8/3$, we get the following bound on approximation error:
$$
\mathcal{O}\left\{ n^{-1/3} \left(C_n + \log^{1/2} n \right)^2 \right\} .
$$
Putting these bounds together in the same manner as the proof of Theorem \ref{thrm-pwc-rate} completes the proof.
\end{proof}

\begin{corollary}
Let $X_i \sim N(\theta_i, \sigma^2)$ be independent such that $\sigma^2$ is known and $\max_{i=1}^n \vert \theta_i \vert < C_n$. Further, let $b_n = \{(8/3) \sigma^2 \log(n) \}^{1/2}$, $\lambda_n = \max_{i=1}^n X_i - \min_{i=1}^n X_i + 2 b_n$, and $h_n > 0$ such that $h_n \asymp \sigma n^{-1/6}$. Define the new function class: 
$$
\mathcal{C}_{0,n} = \left\{ d:\mathbb{R}\to\left[\min_i X_i - b_n, \max_i X_i + b_n\right] \mid TV(d) \leq \lambda_n \right\} , 
$$
and the corresponding estimator $\bar{d}_{h_n} = \arg\min_{d\in\mathcal{C}_{1,n}} \hat{R}_0(d; h_n)$. Let $d^*(x)$ be the optimal separable estimator \eqref{oracle}, then
$$
R(\bar{d}_{h_n}; \theta) - R(d^*; \theta) = \mathcal{O}\left\{ n^{-1/3} \left(C_n + \log^{1/2} n \right)^2 \right\} .
$$
\end{corollary}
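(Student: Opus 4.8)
The plan is to observe that the corollary follows from the proof of Theorem \ref{thrm-pwc-rate} essentially verbatim, because the only structural properties of $\mathcal{D}_{0,n}$ that proof uses are (i) uniform boundedness in the data-dependent range $[\min_i X_i - b_n, \max_i X_i + b_n]$, (ii) a uniform bound $\lambda_n = \max_i X_i - \min_i X_i + 2 b_n$ on the total variation of its members, and (iii) the fact that the optimal separable estimator $d^*$ lies in the class with high probability. All three hold for $\mathcal{C}_{0,n}$, so the only work is to re-verify them and then cite the decomposition of Theorem \ref{thrm-pwc-rate}.

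For the approximation error, I would first re-establish the containment bound. By Proposition \ref{prop-optimalproperties}, $d^*$ is monotone non-decreasing and takes values in $[\min_i\theta_i,\max_i\theta_i]$, so $TV(d^*)=\max_i\theta_i-\min_i\theta_i=r(\theta)$. On the event $[\min_i\theta_i,\max_i\theta_i]\subseteq[\min_i X_i - b_n,\max_i X_i + b_n]$ we simultaneously have that $d^*$ maps into the required range and that $r(\theta)\le\lambda_n$, hence $d^*\in\mathcal{C}_{0,n}$; conversely $\{d^*\notin\mathcal{C}_{0,n}\}$ is contained in the complement of that event, so the range-containment portion of the proof of Lemma \ref{lemma-prob} gives $P(d^*\notin\mathcal{C}_{0,n})\le 4n\exp\{-b_n^2/(2\sigma^2)\}$. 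Plugging $b_n^2=(8/3)\sigma^2\log n$ yields $4n^{-1/3}$. Since Lemma \ref{lemma-approximation-expectation-bounds} is stated for $\mathcal{C}_{0,n}$, the tower-rule argument of Theorem \ref{thrm-pwc-rate} then gives $R(\hat d^*_{\mathcal{C}};\theta)-R(d^*;\theta)=\mathcal{O}\{n^{-1/3}(C_n+\log^{1/2}n)^2\}$, where $\hat d^*_{\mathcal{C}}=\arg\min_{d\in\mathcal{C}_{0,n}}R_n(d;\theta)$.

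For the estimation error, I would re-run the M-estimation argument of Theorem \ref{thrm-pwc-rate}. The basic-inequality decomposition of $R(\bar d_{h_n};\theta)-R(\hat d^*_{\mathcal{C}};\theta)$ into the chi-squared term, the $\sum(X_i-\theta_i)\theta_i$ term, the empirical-process term in $d(X_i)$, and the structural-plus-random penalty term is unchanged; the first two are handled by Lemmas \ref{lemma-estimation-one} and \ref{lemma-estimation-two}. For the remaining terms the only inputs are the uniform bound and the total-variation bound $\lambda_n$: functions of total variation at most $\lambda_n$ that are uniformly bounded are differences of two bounded monotone functions, so their metric entropy is still of order $\lambda_n/t$, and the chaining bounds of Lemma \ref{lemma-estimation-rademacher}, as well as the structural/random split of the penalty term and the construction of the auxiliary class $\mathcal{G}_{n,h}$, all carry over with new universal constants. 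Choosing $h_n\asymp\sigma n^{-1/6}$ again balances the structural error (of order $\sigma^{-1}-(\sigma^2+h_n^2)^{-1/2}$) against the random error (of order $(n h_n^2)^{-1/2}$), giving estimation error $\mathcal{O}\{n^{-1/3}(C_n+\log^{1/2}n)^2\}$. Summing the two pieces gives the claim.

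The only point requiring care — and it is purely bookkeeping — is confirming that nothing in the proof of Theorem \ref{thrm-pwc-rate} tacitly used monotonicity beyond boundedness and bounded variation. The Caratheodory/extreme-value constructions of Theorem \ref{thrm-pwc-implementation} are never invoked in the rate proof, and the entropy bound there for $\mathcal{D}_{0,n}$ is already the generic bounded-variation bound $A\lambda_n/t$, so the extension to $\mathcal{C}_{0,n}$ is immediate.
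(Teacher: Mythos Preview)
Your proposal is correct and follows essentially the same route as the paper's proof, which also reduces the corollary to the argument of Theorem \ref{thrm-pwc-rate} by noting that the approximation-error bound of Lemma \ref{lemma-approximation-expectation-bounds} and the total-variation-based analysis of $\mathcal{G}_{n,h}$ apply to $\mathcal{C}_{0,n}$ without change. A minor difference is that you observe range containment already forces $TV(d^*)=r(\theta)\le\lambda_n$, so $P(d^*\notin\mathcal{C}_{0,n})\le 4n\exp\{-b_n^2/(2\sigma^2)\}$, whereas the paper bounds the range and total-variation events separately and reports a constant of $6$; this sharpening is correct but immaterial for the stated $\mathcal{O}$-rate.
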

\begin{proof}
This proof follows almost immediately from the proof of Theorem \ref{thrm-pwc-rate}. We notice that the addition of a total variation constraint increases the constant in the probability term bound in the approximation error from 4 to 6. Further, since the analysis of $\mathcal{G}_{n,h}$ is already in terms of total variation, it applies to the function class $C_{0,n}$ without modification.
\end{proof}

\end{appendix}


\bibliographystyle{abbrvnat}
\bibliography{paper}